\theoremstyle{plain}
\newtheorem{theorem}{Theorem}[section]
\newtheorem{proposition}[theorem]{Proposition}
\newtheorem{lemma}[theorem]{Lemma}
\newtheorem{corollary}[theorem]{Corollary}
\theoremstyle{definition}
\newtheorem{definition}[theorem]{Definition}
\newtheorem{remark}[theorem]{Remark}
\newtheorem{assumption}{Assumption}
\newtheorem{assumptionBis}{Assumption}
\newtheorem{assumptionBar}{Assumption}
\theoremstyle{remark}
\renewcommand{\labelenumi}{(\roman{enumi})}
\renewenvironment{thebibliography}[1]{%
\begin{oldthebibliography}{#1}%
\setlength{\baselineskip}{.9em}
\linespread{.9}
\small
\setlength{\parskip}{0ex}%
\setlength{\itemsep}{.1em}%
}%
{%
\end{oldthebibliography}%
}
\newcommand{\eps}{\varepsilon}
\newcommand{\vp}{\varphi}
\newcommand{\M}{\mathbb{M}}
\newcommand{\N}{\mathbb{N}}
\newcommand{\R}{\mathbb{R}}
\newcommand{\F}{\mathbb{F}}
\newcommand{\cP}{\mathcal{P}}
\newcommand{\cR}{\mathcal{R}}
\newcommand{\cA}{\mathcal{A}}
\newcommand{\cD}{\mathcal{D}}
\newcommand{\cF}{\mathcal{F}}
\newcommand{\cH}{\mathcal{H}}
\newcommand{\cM}{\mathcal{M}}
\newcommand{\cO}{\mathcal{O}}
\newcommand{\cT}{\mathcal{T}}
\newcommand{\cU}{\mathcal{U}}
\newcommand{\bS}{\mathbf{S}}
\newcommand{\bD}{\mathbf{D}}
\DeclareMathOperator{\Int}{int}
\DeclareMathOperator{\tr}{Tr}
\newcommand{\x}{\times}
\newcommand{\as}{\mbox{-a.s.}}
\newcommand{\1}{\mathbf{1}}
\newcommand{\hS}{\hat{S}}
\newcommand{\hbS}{\hat{\bS}}
\newcommand{\Beqref}[1]{(B\ref{#1})}
\newcommand{\Bbiseqref}[1]{(B\ref{#1}')}
\numberwithin{equation}{section}
\begin{document}

\title{\vspace{-1.5em} Weak Dynamic Programming for Generalized State Constraints
\date{\today}%
\author{
  Bruno Bouchard%
  \thanks{
  CEREMADE, Universit\'e Paris Dauphine and CREST-ENSAE, \texttt{bouchard@ensae.fr}
  }
  \and
  Marcel Nutz%
  \thanks{
  Dept.\ of Mathematics, Columbia University, New York, \texttt{mnutz@math.columbia.edu}.
  Research supported by Swiss National Science Foundation Grant PDFM2-120424/1.
  The authors thank the two referees for their helpful comments.
  }
 }
}
\maketitle \vspace{-1em}

\begin{abstract}
We provide a dynamic programming principle for stochastic optimal control problems with expectation constraints.
A weak formulation, using test functions and a probabilistic relaxation of the constraint, avoids restrictions related to a measurable selection but still implies the Hamilton-Jacobi-Bellman equation in the viscosity sense. We treat open state constraints as a special case of expectation constraints and prove a comparison theorem to obtain the equation for closed state constraints.
\end{abstract}

\vspace{.5em}

{\small
\noindent \emph{Keywords} weak dynamic programming, state constraint, expectation constraint, Hamilton-Jacobi-Bellman equation, viscosity solution, comparison theorem

\noindent \emph{AMS 2000 Subject Classifications}
93E20, %
49L20, %
49L25, %
35K55 %
}\\

\def\bruno#1{{\color{blue} #1}}

\section{Introduction}

We study the problem of stochastic optimal control under state constraints. In the most classical case, this is the problem of maximizing an expected reward, subject to the constraint that the controlled state process has to remain in a given subset $\cO$ of the state space.
There is a rich literature on the associated partial differential equations (PDEs), going back to \cite{Soner.86a, Soner.86b, IshiiKoike.96, Soravia.99a, Soravia.99b} in the first order case and \cite{LasryLions.89, Ka94, IsLo02} in the second order case.
The connection between the control problem and the equation is given by the dynamic programming principle. However, in the stochastic case, it is frequent practice to make this connection only formally, and in fact, we are not aware of a generally applicable, rigorous technique in the literature. Of course, there are specific situations where it is indeed possible to avoid proving the state-constrained dynamic programming principle; in particular, penalization arguments can be useful to reduce to the unconstrained case (e.g., \cite{Ka94}). We refer to \cite{FlemingSoner.06, YongZhou.99} for further background. %

Generally speaking, it is difficult to prove the dynamic programming principle when the regularity of the value function is not known a priori, due to certain measurable selection problems.
It was observed in~\cite{BT10} that, in the unconstrained case, these difficulties can be avoided by a weak formulation of the dynamic programming principle where the value function is replaced by a test function. This formulation, which is tailored to the derivation of the PDE in the sense of viscosity solutions, avoids the measurable selection and uses only a simple covering argument.
It turns out that the latter does not extend directly to the case with state constraints. Essentially, the reason is that if $\nu$ is some admissible control for the initial condition $x\in \cO$---i.e., the controlled state process $X^\nu_{t,x}$ started at $x$ remains in $\cO$---then $\nu$ may fail to have this property for a nearby initial condition $x'\in\cO$.

However, if $\cO$ is open and mild continuity assumptions are satisfied, then $X^\nu_{t,x'}$ will violate the state constraint with at most small probability when $x'$ is close to $x$.
This observation leads us to consider optimization problems with constraints in probability, and more generally expectation constraints of the form $E[g(X^\nu_{t,x}(T))]\leq m$ for given $m\in\R$. We shall see that, following the idea of~\cite{BET09}, such problems are amenable to dynamic programming if the constraint level $m$ is formulated dynamically via an auxiliary family of martingales. A key insight of the present paper is that relaxing the level $m$ by a small constant allows to prove a weak dynamic programming principle for general expectation constraint problems (Theorem~\ref{th:DPP}), while the PDE can be derived despite the relaxation.
We shall then obtain the dynamic programming principle for the classical state constraint problem (Theorem~\ref{thm: DPP state constraint}) by passing to a limit $m\downarrow 0$, with a suitable choice of $g$ and certain regularity assumptions. Of course, expectation constraints are of independent interest and use; e.g., in Mathematical Finance, where one considers the problem of maximizing expected terminal wealth $E[X_{0,x}^{\nu}(T)]$ under the loss constraint $E[([X_{0,x}^{\nu}(T)-x]^{-})^{p}]\leq m$, for some given $m,p>0$.

We exemplify the use of these results in the setting of controlled diffusions and show how the PDEs for expectation constraints and state constraints can be derived (Theorems~\ref{thm: PDE characterization V} and~\ref{thm: PDE state constraint}). For the latter case, we introduce an appropriate continuity condition at the boundary, under which we prove a comparison theorem. While the above concerned an open set $\cO$ and does not apply directly to the closed domain $\overline{\cO}$, we show via the comparison result that the value function for $\overline{\cO}$ coincides with the one for $\cO$, under certain conditions.

The remainder of the paper is organized as follows. In Section~\ref{se:DPPexpectationConstraints} we introduce an abstract setup for dynamic programming under expectation constraints and prove the corresponding relaxed weak dynamic programming principle. In Section~\ref{sec: DPP state constraint} we deduce the dynamic programming principle for the state constraint $\cO$. We specialize to the case of controlled diffusions in Section~\ref{se:ApplControlledDiff}, where we study the Hamilton-Jacobi-Bellman PDEs for expectation and state constraints. Appendix~\ref{se:appendixComparison} provides the comparison theorem.

Throughout this paper, (in)equalities between random variables are in the almost sure sense and relations between processes are up to evanescence, unless otherwise stated.

\section{Dynamic Programming Principle for Expectation Constraints}\label{se:DPPexpectationConstraints}

We fix a time horizon $T\in(0,\infty)$ and a probability space $(\Omega,\cF,P)$ equipped with a filtration
$\F=(\cF_t)_{t\in[0,T]}$. For each $t\in[0,T]$, we are given a set $\cU_t$ whose elements are seen as controls at time $t$. Given a separable metric space $S$, the \emph{state space}, we denote by
$\bS:=[0,T]\times S$ the time-augmented state space.
For each $(t,x)\in\bS$ and $\nu\in\cU_t$,
we are given a c\`adl\`ag adapted process $X_{t,x}^\nu=\{X_{t,x}^\nu(s),\,s\in[t,T]\}$ with values in $S$, the controlled \emph{state process}.
Finally, we are given two measurable functions $f,g: S\to \R$. We assume that
\begin{equation}\label{eq:controlsIntegrable}
  E[|f(X_{t,x}^\nu(T))|]<\infty \quad\mbox{and}\quad E[|g(X_{t,x}^\nu(T))|]<\infty\quad\mbox{for all }\nu\in\cU_t,
\end{equation}
so that the \emph{reward and constraint functions}
\[
  F(t,x;\nu):=E[f(X_{t,x}^\nu(T))],\quad G(t,x;\nu):=E[g(X_{t,x}^\nu(T))]%
\]
are well defined.
We also introduce the \emph{value function}
\begin{equation}\label{eq:valueFctDef}
  V(t,x,m):=\sup_{\nu\in\cU(t,x,m)} F(t,x;\nu),\quad  (t,x,m)\in\hbS,
\end{equation}
where $\hbS:=\bS\times\R\equiv [0,T]\times S\times \R$
and
\begin{equation}\label{eq:admControls}
  \cU(t,x,m):=\big\{\nu\in\cU_t:\, G(t,x;\nu)\leq m\big\}
\end{equation}
is the set of controls admissible at constraint level $m$. Here $\sup \emptyset:= -\infty$.

The following observation is the heart of our approach: Given a control $\nu$ admissible at level $m$ at the point $x$, relaxing the level $m$ will make $\nu$ admissible in an entire neighborhood of $x$. This will be crucial for the covering arguments used below. We use the acronym u.s.c.\ (l.s.c.) to indicate upper (lower) semicontinuity.

\begin{lemma}\label{le:nbhdForCovering}
  Let $(t,x,m)\in\hbS$, $\nu\in\cU(t,x,m)$ and assume that the function $x'\mapsto G(t,x';\nu)$ is u.s.c.\ at $x$. For each $\delta>0$ there exists a neighborhood $B$ of $x\in S$ such that $\nu\in \cU(t,x',m'+\delta)$ for all $x'\in B$ and all $m'\geq m$.
\end{lemma}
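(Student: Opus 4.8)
The plan is to simply unravel the definitions: the only content of the statement is that the $\delta$-relaxation of the level absorbs exactly the upward fluctuation of $G$ permitted by upper semicontinuity, so there is essentially nothing to do beyond quoting the definition of u.s.c.

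First I would record the hypotheses in explicit form. Since $\nu\in\cU(t,x,m)$, by \eqref{eq:admControls} we have $\nu\in\cU_t$ and $G(t,x;\nu)=E[g(X^\nu_{t,x}(T))]\leq m$; moreover, by \eqref{eq:controlsIntegrable} the quantity $G(t,x';\nu)$ is well defined for every $x'$, so the function $x'\mapsto G(t,x';\nu)$ referred to in the statement makes sense.

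Next I would invoke upper semicontinuity at $x$: given $\delta>0$, there is a neighborhood $B$ of $x$ in $S$ such that $G(t,x';\nu)\leq G(t,x;\nu)+\delta$ for all $x'\in B$ (if one prefers the $\limsup$ formulation, pick $B$ so that $G(t,x';\nu)<G(t,x;\nu)+\delta$, which is even slightly stronger). Then for any $x'\in B$ and any $m'\geq m$ one chains the inequalities
\[
  G(t,x';\nu)\ \leq\ G(t,x;\nu)+\delta\ \leq\ m+\delta\ \leq\ m'+\delta,
\]
and since $\nu\in\cU_t$ this is precisely the assertion $\nu\in\cU(t,x',m'+\delta)$ by \eqref{eq:admControls}. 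This $B$ works for all $x'\in B$ and all $m'\geq m$ simultaneously, as required.

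There is no genuine obstacle here: the step with any (minor) subtlety is simply making sure one uses the $\delta$-slack to dominate the oscillation of $G$ guaranteed by u.s.c., while the clause ``for all $m'\geq m$'' is automatic because increasing the constraint level only enlarges the admissible set $\cU(t,x';\cdot)$.
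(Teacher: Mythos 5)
Your proof is correct and follows essentially the same route as the paper's: record $G(t,x;\nu)\leq m$ from the definition of $\cU(t,x,m)$, use upper semicontinuity to pick $B$ with $G(t,x';\nu)\leq G(t,x;\nu)+\delta\leq m'+\delta$ on $B$, and conclude. Your version is just slightly more explicit about the chain of inequalities and the well-definedness of $G$.
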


\begin{proof}
  We have $G(t,x;\nu)\leq m$ by the definition~\eqref{eq:admControls} of $\cU(t,x,m)$. In view of the upper semicontinuity, there exists a neighborhood $B$ of $x$ such that $G(t,x';\nu)\leq m+\delta \leq m'+ \delta$ for $x'\in B$; that is, $\nu\in \cU(t,x',m'+\delta)$.
\end{proof}

A control problem with an expectation constraint of the form~\eqref{eq:admControls} is not amenable to dynamic programming if we just consider a fixed level $m$. Extending an idea from~\cite{BET09, BEI10}, we shall see that this changes if the constraint is formulated dynamically by using auxiliary martingales.
To this end, we consider for each $t\in[0,T]$ a family $\cM_{t,0}$ of c\`adl\`ag martingales $M=\{M(s),\,s\in[t,T]\}$ with initial value $M(t)=0$. We also introduce
\[
  \cM_{t,m}:=\{m+M:\, M\in\cM_{t,0}\},\quad m\in\R.
\]
We assume that, for all $(t,x)\in\bS$ and $\nu\in\cU_t$,
\begin{equation}\label{eq:martRichness}
  \mbox{there exists $M_{t}^\nu[x]\in\cM_{t,m}$ such that $M_{t}^\nu[x](T)=g(X_{t,x}^\nu(T))$,}
\end{equation}
where, necessarily, $m=E[g(X_{t,x}^\nu(T))]$. In particular, given $\nu\in\cU_t$, the set
\[
  \cM_{t,m,x}^+(\nu):=\{M\in\cM_{t,m}:\, M(T)\geq g(X_{t,x}^\nu(T))\}
\]
is nonempty for any $m\geq E[g(X_{t,x}^\nu(T))]$. More precisely, we have the following characterization.

\begin{lemma}\label{le:reformulationOfConstraint}
  Let $(t,x)\in\bS$ and $m\in\R$. Then
  \[
   \cU(t,x,m)=\big\{\nu\in\cU_t:\, \cM_{t,m,x}^+(\nu)\neq\emptyset \big\}.
  \]
\end{lemma}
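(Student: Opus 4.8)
The statement is an equality of two subsets of $\cU_t$, so the plan is to establish the two inclusions separately; both directions are short and use only the defining property of martingales together with the richness assumption~\eqref{eq:martRichness}.

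For the inclusion ``$\supseteq$'' I would start from a control $\nu\in\cU_t$ with $\cM_{t,m,x}^+(\nu)\neq\emptyset$ and pick some $M\in\cM_{t,m}$ satisfying $M(T)\geq g(X_{t,x}^\nu(T))$. Writing $M=m+M'$ with $M'\in\cM_{t,0}$, the martingale property gives $E[M'(T)]=E[E[M'(T)\mid\cF_t]]=M'(t)=0$, hence $E[M(T)]=m$; then monotonicity of the expectation, together with the integrability~\eqref{eq:controlsIntegrable} that makes $G(t,x;\nu)$ well defined, yields $G(t,x;\nu)=E[g(X_{t,x}^\nu(T))]\leq E[M(T)]=m$, i.e.\ $\nu\in\cU(t,x,m)$.

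For the inclusion ``$\subseteq$'' I would take $\nu\in\cU(t,x,m)$, set $m_0:=G(t,x;\nu)=E[g(X_{t,x}^\nu(T))]\leq m$, and invoke~\eqref{eq:martRichness} to obtain a martingale $M_t^\nu[x]\in\cM_{t,m_0}$ with terminal value $g(X_{t,x}^\nu(T))$ (the ``necessarily $m=E[g(X_{t,x}^\nu(T))]$'' clause in~\eqref{eq:martRichness} is what pins down the starting value $m_0$). The shifted process $M:=M_t^\nu[x]+(m-m_0)$ then lies in $\cM_{t,m}$, since adding the constant $m-m_0$ turns $m_0+M'$ into $m+M'$ with $M'\in\cM_{t,0}$, and its terminal value satisfies $M(T)=g(X_{t,x}^\nu(T))+(m-m_0)\geq g(X_{t,x}^\nu(T))$ because $m-m_0\geq0$; thus $M\in\cM_{t,m,x}^+(\nu)$ and the set is nonempty.

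I do not expect a genuine obstacle here: the argument is essentially bookkeeping with the constant shift $m-m_0$ and the remark that adding constants preserves membership in $\cM_{t,\cdot}$. The only points that deserve a line of justification are that every element of $\cM_{t,0}$ is a true (integrable) martingale, so that the identity $E[M(T)]=M(t)$ is legitimate in the ``$\supseteq$'' direction, and that the matching of starting values forced by~\eqref{eq:martRichness} is precisely what makes the shift by $m-m_0$ the correct one in the ``$\subseteq$'' direction.
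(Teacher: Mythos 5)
Your proof is correct and follows essentially the same two-inclusion argument as the paper: taking expectations of $M(T)\geq g(X_{t,x}^\nu(T))$ for one direction, and shifting the martingale $M_t^\nu[x]$ from~\eqref{eq:martRichness} by the nonnegative constant $m-m'$ for the other. You merely spell out the intermediate step $E[M(T)]=m$ a bit more explicitly than the paper does.
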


\begin{proof}
  Let $\nu\in\cU_t$. If there exists some $M\in \cM_{t,m,x}^+(\nu)$, then taking expectations yields $E[g(X_{t,x}^\nu(T))]\leq E[M(T)]=m$ and hence
  $\nu\in\cU(t,x,m)$.

  Conversely, let $\nu\in\cU(t,x,m)$; i.e., we have $m':=E[g(X_{t,x}^\nu(T))]\leq m$.
  With $M_{t}^\nu[x]$ as in~\eqref{eq:martRichness}, $M:=M_{t}^\nu[x]+m-m'$ is an element of $\cM_{t,m,x}^+(\nu)$.
\end{proof}

It will be useful in the following to consider for each $t\in[0,T]$ an auxiliary subfiltration $\F^t=(\cF^t_s)_{s\in[0,T]}$ of $\F$ such that $X^\nu_{t,x}$ and $M$ are $\F^t$-adapted for all $x\in S$, $\nu\in\cU_t$ and $M\in\cM_{t,0}$. Moreover, we denote by
$\cT^t$ the set of $\F^t$-stopping times with values in $[t,T]$.

The following assumption corresponds to one direction of the dynamic programming principle; cf.\ Theorem~\ref{th:DPP}(i) below.

\begin{assumption}\label{ass:A}
  For all $(t,x,m)\in\hbS$, $\nu\in\cU(t,x,m)$, $M\in\cM_{t,m,x}^+(\nu)$, $\tau\in\cT^t$ and $P$-a.e.\ $\omega\in\Omega$, there exists $\nu_\omega\in\cU(\tau(\omega),X_{t,x}^\nu(\tau)(\omega),M(\tau)(\omega))$ such that
  \begin{equation}\label{eq:assA}
    E\big[f(X_{t,x}^\nu(T))\big|\cF_\tau\big] (\omega) \leq F\big( \tau(\omega), X_{t,x}^\nu(\tau)(\omega);\nu_\omega \big).
  \end{equation}
\end{assumption}

Next, we state two variants of the assumptions for the converse direction of the dynamic programming principle; we shall comment on the differences in Remark~\ref{rk:versionsOfDPP} below. In the first variant, the intermediate time is deterministic; this will be enough to cover stopping times with countably many values in Theorem~\ref{th:DPP}(ii) below.  We recall the notation $M_{t}^\nu[x]$ from~\eqref{eq:martRichness}.

\begin{assumption}\label{ass:B}
  Let $(t,x)\in\bS$, $\nu\in\cU_t$, $s\in[t,T]$, $\bar{\nu}\in \cU_s$ and $\Gamma\in\cF^t_s$.

  \begin{enumerate}[topsep=3pt, partopsep=0pt, itemsep=1pt,parsep=2pt]
  \renewcommand{\labelenumi}{({\bf B\arabic{enumi}})}

   \item \label{ass:B1} There exists a control $\tilde{\nu} \in \cU_t$ such that
    \begin{align}
      X_{t,x}^{\tilde{\nu}}(\cdot)&=X_{t,x}^\nu(\cdot)&&\mbox{on }[t,T]\times (\Omega\setminus \Gamma);\label{eq:causalityFIN}\\
      X_{t,x}^{\tilde{\nu}}(\cdot)&=X_{s,X_{t,x}^\nu(s)}^{\bar{\nu}} (\cdot) &&\mbox{on }[s,T]\times \Gamma;\label{eq:causalitySwitchFIN} \\
      E\big[ f(X_{t,x}^{\tilde{\nu}}(T)) \big|\cF_s\big] &\geq F(s,X_{t,x}^\nu(s);\bar{\nu}) &&\mbox{on } \Gamma.\label{eq:consistencyFIN}
    \end{align}
    The control $\tilde{\nu}$ is denoted by $\nu\otimes_{(s,\Gamma)}\bar{\nu}$ and called a \emph{concatenation} of $\nu$ and $\bar{\nu}$ on $(s,\Gamma)$.

   \item \label{ass:B2} Let $M\in\cM_{t,0}$. There exists a process $\bar{M}=\{\bar{M}(r),\,r\in [s,T]\}$ such that
    \[
      \bar{M}(\cdot)(\omega) = \big(M_{s}^{\bar{\nu}}[X_{t,x}^\nu(s)(\omega)] (\cdot)\big)(\omega)\quad \mbox{on } [s,T]\quad P\as
    \]
    and
    \[
      M\1_{[t,s)} + \1_{[s,T]}\Big( M\1_{\Omega\setminus\Gamma} + \big[\bar{M} - \bar{M}(s)+M(s)\big]\1_\Gamma \Big) \in\cM_{t,0}.
    \]

   \item  \label{ass:B3} Let $m\in\R$ and $M\in\cM_{t,m,x} ^+(\nu)$. For $P$-a.e.\ $\omega\in\Omega$,
    there exist a control $\nu_\omega\in\cU(s,X_{t,x}^\nu(s)(\omega),M(s)(\omega))$.
  \end{enumerate}
\end{assumption}

In the second variant, the intermediate time is a stopping time and we have an additional assumption about the structure of the sets $\cU_s$. This variant corresponds to Theorem~\ref{th:DPP}(ii') below.

\addtocounter{assumptionBis}{1}
\begin{assumptionBis}\label{ass:B'}
  Let $(t,x)\in\bS$, $\nu\in\cU_t$, $\tau\in\cT^t$, $\Gamma\in\cF^t_\tau$ and $\bar{\nu}\in \cU_{\|\tau\|_{L^\infty}}$.
  \begin{enumerate}[topsep=5pt, partopsep=0pt, itemsep=1pt,parsep=2pt]
  \renewcommand{\labelenumi}{({\bf B\arabic{enumi}'})}
  \addtocounter{enumi}{-1}

   \item \label{ass:B0'} $\cU_{s}\supseteq \cU_{s'}$ for all $0\leq s\leq s'\leq T$.

   \item \label{ass:B1'} There exists a control $\tilde{\nu} \in \cU_t$, denoted by $\nu\otimes_{(\tau,\Gamma)}\bar{\nu}$, such that
    \begin{align}
      X_{t,x}^{\tilde{\nu}}(\cdot)&=X_{t,x}^\nu(\cdot)&&\mbox{on }[t,T]\times (\Omega\setminus \Gamma);\nonumber\\%\label{eq:causality}
      X_{t,x}^{\tilde{\nu}}(\cdot)&=X_{\tau,X_{t,x}^\nu(\tau)}^{\bar{\nu}} (\cdot) &&\mbox{on }[\tau,T]\times \Gamma;\label{eq:causalitySwitch} \\
      E\big[ f(X_{t,x}^{\tilde{\nu}}(T)) \big|\cF_\tau\big] &\geq F(\tau,X_{t,x}^\nu(\tau);\bar{\nu}) &&\mbox{on } \Gamma.\label{eq:consistency}
    \end{align}

   \item \label{ass:B2'} Let $M\in\cM_{t,0}$. There exists a process $\bar{M}=\{\bar{M}(r),\,r\in [\tau,T]\}$ such that
    \[
      \bar{M}(\cdot)(\omega) = \big(M_{\tau(\omega)}^{\bar{\nu}}[X_{t,x}^\nu(\tau)(\omega)] (\cdot)\big)(\omega)\quad \mbox{on } [\tau,T]\quad P\as
    \]
    and
    \[
      M\1_{[t,\tau)} + \1_{[\tau,T]}\Big( M \1_{\Omega\setminus\Gamma} + \big[\bar{M} - \bar{M}(\tau)+M(\tau)\big]\1_{\Gamma}\Big) \in\cM_{t,0}.
    \]

   \item  \label{ass:B3'} Let $m\in\R$ and $M\in\cM_{t,m,x} ^+(\nu)$. For $P$-a.e.\ $\omega\in\Omega$,
    there exist a control $\nu_\omega\in\cU(\tau(\omega),X_{t,x}^\nu(\tau)(\omega),M(\tau)(\omega))$.
  \end{enumerate}
\end{assumptionBis}

\begin{remark}\label{rem: ass A implies B3}{\rm
  \begin{enumerate}[topsep=3pt, partopsep=0pt, itemsep=1pt,parsep=2pt]

    \item Assumption~\ref{ass:B'} implies Assumption~\ref{ass:B}. Moreover, Assumption~\ref{ass:A} implies~\Bbiseqref{ass:B3'}.

    \item Let $\bD:=\{(t,x,m)\in\hbS: \cU(t,x,m)\neq\emptyset\}$ denote the natural domain of our optimization problem.
     Then~\Bbiseqref{ass:B3'} can be stated as follows: for any $\nu\in \cU(t,x,m)$ and $M\in\cM^+_{t,m,x}(\nu)$, \begin{equation}\label{eq:domainInvariance}
       \big(\tau,X_{t,x}^\nu(\tau),M(\tau)\big)\in\bD\quad P\as\quad\mbox{for all }\tau\in\cT^t.
     \end{equation}
     Under~\Beqref{ass:B3}, the same holds if $\tau$ takes countably many values.
     We remark that the invariance property~\eqref{eq:domainInvariance} corresponds to one direction in the geometric dynamic programming of~\cite{SonerTouzi.02b}. %

    \item We note that~\eqref{eq:causalitySwitch} (and similarly~\eqref{eq:causalitySwitchFIN}) states in particular that
    \[
      (r,\omega)\mapsto \big(X_{\tau,X_{t,x}^\nu(\tau)}^{\bar{\nu}} (r)\big)(\omega)
      := \big(X_{\tau(\omega),X_{t,x}^\nu(\tau)(\omega)}^{\bar{\nu}} (r)\big)(\omega)
    \]
    is a well-defined adapted process (up to evanescence) on $[t,T]\times \Gamma$. Of course, this is an implicit measurability condition on $(t,x)\mapsto X_{t,x}^{\bar{\nu}}$.

    \item For an illustration of~\Bbiseqref{ass:B1'}, let us assume that the controls are
    predictable processes. In this case, one can often take
    \begin{equation}\label{eq:concatenationExample}
      \nu\otimes_{(\tau,\Gamma)}\bar{\nu}:=\nu\1_{[0,\tau]} + \1_{(\tau,T]} \big(\bar{\nu}\1_{\Gamma} + \nu\1_{\Omega\setminus \Gamma}\big)
    \end{equation}
    and the condition that $\nu\otimes_{(\tau,\Gamma)}\bar{\nu}\in\cU_t$ is called \emph{stability under concatenation}. The idea is that we use the control $\nu$ up to time $\tau$; after time $\tau$, we continue using $\nu$ in the event $\Omega\setminus\Gamma$ while we switch to the control $\bar{\nu}$ in the event $\Gamma$.
    Of course, one can omit the set $\Gamma\in\cF^t_\tau$ by observing that
    \[
      \nu\otimes_{(\tau,\Gamma)}\bar{\nu} = \nu\1_{[0,\tau']} + \bar{\nu}\1_{(\tau',T]}
    \]
    for the $\F^t$-stopping time $\tau':=\tau\1_\Gamma+T\1_{\Omega\setminus\Gamma}$.

  \end{enumerate}}
\end{remark}

We can now state our weak dynamic programming principle for the stochastic control problem~\eqref{eq:valueFctDef} with expectation constraint. The formulation is weak in two ways; namely, the value function is replaced by a test function and the constraint level $m$ is relaxed by an arbitrarily small constant $\delta>0$ (cf.\ \eqref{eq:DPPii} below).
The flexibility of choosing the set $\cD$ appearing below, will be used in Section~\ref{sec: DPP state constraint}. We recall the set $\bD$ introduced in Remark~\ref{rem: ass A implies B3}(ii).

\begin{theorem}\label{th:DPP}
  Let $(t,x,m)\in\hbS$, $\nu\in\cU(t,x,m)$ and $M\in \cM^+_{t,m,x}(\nu)$. Let $\tau\in\cT^t$ and let
  $\cD\subseteq \hbS$ be a set such that $(\tau,X_{t,x}^\nu(\tau),M(\tau))\in \cD$ holds $P$-a.s.

  \begin{enumerate}[topsep=3pt, partopsep=0pt, itemsep=1pt,parsep=2pt]
   \item Let Assumption~\ref{ass:A} hold true and let $\varphi: \hbS\to[-\infty,\infty]$ be a measurable function such that $V\leq \varphi$ on $\cD$. Then $E \big[ \varphi(\tau,X_{t,x}^\nu(\tau),M(\tau))^- \big]<\infty$ and
      \begin{equation}\label{eq:DPPi}
        F(t,x;\nu) \leq E \big[ \varphi(\tau,X_{t,x}^\nu(\tau),M(\tau)) \big].
      \end{equation}

  \item Let $\delta>0$, let Assumption~\ref{ass:B} hold true and assume that $\tau$ takes countably many values
  $(t_i)_{i\geq 1}$. Let $\varphi: \hbS\to[-\infty,\infty)$ be a measurable function such that $V\geq \varphi$ on $\cD$.
  Assume that for any fixed $\bar{\nu}\in\cU_{t_i}$,
  \begin{equation}\label{eq:semicontAssFIN}
    \left.
    \begin{array}{rcll}
      (x',m')&\!\!\!\mapsto\!\!\!& \varphi(t_i,x',m')&\mbox{is u.s.c.} \\[.2em]
      x'&\!\!\!\mapsto\!\!\!& F(t_i,x';\bar{\nu})&\mbox{is l.s.c.} \\[.2em]
      x'&\!\!\!\mapsto\!\!\!& G(t_i,x';\bar{\nu})&\mbox{is u.s.c.}
    \end{array}
    \right\}\mbox{ on }\cD^i
  \end{equation}
  for all $i\geq 1$, where $\cD^i:=\{(x',m'):\, (t_i,x',m')\in\cD\}\subseteq S\times\R$. Then
    \begin{equation}\label{eq:DPPii}
      V(t,x,m+\delta) \geq E \big[ \varphi(\tau,X_{t,x}^\nu(\tau),M(\tau)) \big].
    \end{equation}
  \item[(ii')] Let $\delta>0$ and let Assumption~\ref{ass:B'} hold true. Let $\varphi: \hbS\to[-\infty,\infty)$ be a measurable function such that $V\geq \varphi$ on $\cD$.
  Assume that $\cD\cap\bD\subseteq \hbS$ is $\sigma$-compact and that for any fixed $\bar{\nu}\in\cU_{t_0}$, $t_0\in[t,T]$,
  \begin{equation}\label{eq:semicontAss}
    \left.
    \begin{array}{rcll}
      (t',x',m')&\!\!\!\mapsto\!\!\!& \varphi(t',x',m')&\mbox{is u.s.c.} \\[.2em]
      (t',x')&\!\!\!\mapsto\!\!\!& F(t',x';\bar{\nu})&\mbox{is l.s.c.} \\[.2em]
      (t',x')&\!\!\!\mapsto\!\!\!& G(t',x';\bar{\nu})&\mbox{is u.s.c.}
    \end{array}
    \right\}\mbox{ on }\cD \cap \{t'\leq t_0\}.
  \end{equation}
  Then~\eqref{eq:DPPii} holds true.
  \end{enumerate}
\end{theorem}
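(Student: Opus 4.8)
The proof of Theorem~\ref{th:DPP} naturally splits into the three parts, and I would prove them in the order (i), (ii), (ii').

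For part (i) the strategy is the classical "one direction is easy" argument, executed with the martingale reformulation. Fix $\nu\in\cU(t,x,m)$ and $M\in\cM^+_{t,m,x}(\nu)$. By Assumption~\ref{ass:A}, for $P$-a.e.\ $\omega$ there is a control $\nu_\omega\in\cU(\tau(\omega),X^\nu_{t,x}(\tau)(\omega),M(\tau)(\omega))$ with $E[f(X^\nu_{t,x}(T))\mid\cF_\tau](\omega)\le F(\tau(\omega),X^\nu_{t,x}(\tau)(\omega);\nu_\omega)$. Since $\nu_\omega$ is admissible at the point $(\tau(\omega),X^\nu_{t,x}(\tau)(\omega),M(\tau)(\omega))\in\cD$, the right-hand side is $\le V(\tau(\omega),X^\nu_{t,x}(\tau)(\omega),M(\tau)(\omega))\le\varphi(\tau(\omega),X^\nu_{t,x}(\tau)(\omega),M(\tau)(\omega))$. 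Taking expectations and using the tower property gives \eqref{eq:DPPi}; the integrability $E[\varphi(\tau,X^\nu_{t,x}(\tau),M(\tau))^-]<\infty$ follows because $\varphi(\cdots)\ge E[f(X^\nu_{t,x}(T))\mid\cF_\tau]$ and the latter is integrable by \eqref{eq:controlsIntegrable}. The only subtlety here is the measurability of $\omega\mapsto F(\tau(\omega),X^\nu_{t,x}(\tau)(\omega);\nu_\omega)$, which is sidestepped because we only need the inequality and can work with $\varphi$ directly; no measurable selection is required.

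For part (ii) the idea is a covering argument at each of the countably many levels $t_i$. Fix $i$ and work on the set $\{\tau=t_i\}$. We want, for each point $(x',m')\in\cD^i$ reachable via $(X^\nu_{t,x}(t_i),M(t_i))$, to pick a nearly-optimal control $\bar\nu$ for $V(t_i,x',m')$ and, thanks to Lemma~\ref{le:nbhdForCovering} (using the u.s.c.\ of $G(t_i,\cdot;\bar\nu)$), have $\bar\nu$ remain admissible at level $m''+\delta$ on a whole neighborhood of $x'$, while l.s.c.\ of $F(t_i,\cdot;\bar\nu)$ and u.s.c.\ of $\varphi(t_i,\cdot)$ ensure that $F(t_i,x'';\bar\nu)\ge\varphi(t_i,x'',m'')-\eps$ on that neighborhood. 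One then extracts a countable subcover of $\cD^i$ by such neighborhoods, disjointifies it into Borel pieces $(A^i_j)_j$, and on $\{\tau=t_i\}\cap\{(X^\nu_{t,x}(t_i),M(t_i))\in A^i_j\}$ one uses the concatenation $\nu\otimes_{(t_i,\Gamma^i_j)}\bar\nu^i_j$ from (B1), together with the martingale bookkeeping in (B2)–(B3), to build a single admissible control $\tilde\nu\in\cU(t,x,m+\delta)$ whose reward satisfies $F(t,x;\tilde\nu)\ge E[\varphi(\tau,X^\nu_{t,x}(\tau),M(\tau))]-\eps$ via \eqref{eq:consistencyFIN}. Letting $\eps\downarrow0$ gives \eqref{eq:DPPii}. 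A technical point is that the concatenations must be performed simultaneously over the countably many $(i,j)$ and shown to land in $\cU_t$ with the right martingale on top; this is handled by first concatenating over finitely many pieces, controlling the error, and passing to the limit, or by invoking a measurable version of the concatenation if available — but the countable-valued hypothesis on $\tau$ is exactly what keeps this elementary.

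For part (ii') the same scheme is run, but now the intermediate time $\tau$ is a genuine stopping time, so one cannot index the covering by the values of $\tau$. Instead one covers the $\sigma$-compact set $\cD\cap\bD\subseteq\hbS$ directly: write it as a countable union of compacts, and around each point $(t_0,x_0,m_0)$ use Lemma~\ref{le:nbhdForCovering} and the joint (in $(t',x')$) semicontinuity in \eqref{eq:semicontAss} — note it is required only on $\{t'\le t_0\}$, which is what makes it compatible with controls $\bar\nu\in\cU_{t_0}$ and the monotonicity (B0') $\cU_s\supseteq\cU_{s'}$ — to get a space-time neighborhood on which a fixed $\bar\nu$ is $\eps$-optimal and $\delta$-admissible. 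Compactness yields a finite subcover of each compact piece, hence a countable Borel partition of $\cD\cap\bD$; on each piece one concatenates via (B1')–(B3') using $\nu\otimes_{(\tau,\Gamma)}\bar\nu$ as in Remark~\ref{rem: ass A implies B3}(iv), and assembles the global control exactly as before. The main obstacle in both (ii) and (ii') is this assembly step: one must verify that the concatenated control is genuinely in $\cU_t$, that the associated auxiliary process is a martingale in $\cM_{t,0}$ (so that the relaxed constraint at level $m+\delta$ is met), and that the reward estimate survives the passage from finitely many to countably many pieces. Everything else is bookkeeping around Lemmas~\ref{le:nbhdForCovering} and~\ref{le:reformulationOfConstraint}.
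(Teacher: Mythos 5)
Your parts (i) and (ii) are correct and follow essentially the paper's own proof: part (i) is the chain of inequalities $E[f(X^\nu_{t,x}(T))|\cF_\tau]\le F\le V\le\varphi$ via Assumption~A, and part (ii) is the paper's four-step argument (covering $\cD^i\subseteq S\times\R$ at each level $t_i$, disjointifying, concatenating finitely many pieces via (B1)--(B2), and passing $k\to\infty$ before letting $\eps\downarrow 0$).

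There is, however, a genuine gap in part (ii'). You correctly observe that because of~(B0') and the one-sided semicontinuity on $\{t'\le t_0\}$, the admissible neighborhoods are half-open in time, of the form $\big((s-r,s]\cap[0,T]\big)\times B_r(y,n)$. But you then claim that ``compactness yields a finite subcover of each compact piece.'' This fails: sets of this form are not open in the standard topology on $[0,T]$, so a cover of a compact set by them need not admit a finite subcover. For instance, $(0,1]\subseteq\bigcup_{s\in(0,1]}(s/2,s]$ has no finite subcover, even though $[0,1]$ is compact; adjoining any half-open set at $0$ to handle the endpoint does not help, because a finite union $\bigcup_{i}(s_i/2,s_i]$ always misses $(0,\min_i s_i/2]$. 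The paper's proof circumvents this precisely by refining the time topology to the upper-limit (Sorgenfrey-type) topology $[0,T]^*$, in which these sets \emph{are} open, and then invoking the Lindel\"of property rather than compactness: $[0,T]^*$ is Lindel\"of, the product of a Lindel\"of space with a $\sigma$-compact space is Lindel\"of, and $\cD$ (being $\sigma$-compact in the coarser topology) is a countable union of closed subsets of the Lindel\"of space $[0,T]^*\times\cD'$, hence itself Lindel\"of. This yields a \emph{countable}, not finite, subcover. Note also that $[0,T]^*$ is not second-countable, so one cannot substitute separability for compactness here; the Lindel\"of argument is the missing ingredient, and Remark~\ref{rk:LindeloefAim} in the paper even flags that the $\sigma$-compactness hypothesis is used solely to secure this Lindel\"of property.
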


The following convention is used on the right hand side of~\eqref{eq:DPPii}: if $Y$ is any random variable,
\begin{equation}\label{eq:expConvention}
  \mbox{we set }E[Y]:=-\infty\quad\mbox{if}\quad E[Y^+]=E[Y^-]=\infty.
\end{equation}
We note that Assumption~\Bbiseqref{ass:B0'} ensures that the expressions $F(t',x';\bar{\nu})$ and $G(t',x';\bar{\nu})$ in \eqref{eq:semicontAss} are well defined for $t'\leq t_0$.

\begin{remark}\label{rk:versionsOfDPP}{\rm
 The difference between parts (ii) and (ii') of the theorem stems from the fact that in the proof of (ii') we consider $[0,T]\times S\times\R$ as the state space while for (ii) it suffices to consider $S\times\R$ and hence no assumptions on the time variable are necessary. Regarding applications, (ii) is obviously the better choice if stopping times with countably many values (and in particular deterministic times) are sufficient.

 There is a number of cases where the extension to a general stopping time $\tau$ can be accomplished a posteriori by approximation, in particular if one has a priori estimates for the value function so that one can restrict to test functions with specific growth properties. Assume for illustration that $f$ is bounded from above, then so is $V$ and one will be interested only in test functions $\varphi$ which are bounded from above; moreover, it will typically not hurt to assume that $\varphi$ is u.s.c.\ (or even continuous) in all variables. Now let $(\tau_n)$ be a sequence of stopping time taking finitely many (e.g., dyadic) values such that $\tau_n\downarrow \tau$ $P$-a.s. Applying~\eqref{eq:DPPii} to each $\tau^n$ and using Fatou's lemma as well as the right-continuity of the paths of $X^\nu_{t,x}$ and $M$, we then find that
 \eqref{eq:DPPii} also holds for the general stopping time $\tau$.

 On the other hand, it is not always possible to pass to the limit as above and then (ii') is necessary to treat general stopping times. The compactness assumption should be reasonable \emph{provided that} $S$ is $\sigma$-compact; e.g., $S=\R^d$. Then, the typical way to apply (ii') is to take $(t,x,m)\in\Int \bD$ and let $\cD$ be a open or closed neighborhood of $(t,x,m)$ such that $\cD\subseteq \bD$.}
\end{remark}

\begin{proof}[Proof of Theorem~\ref{th:DPP}]
  (i)~Fix $(t,x,m)\in\hbS$.
  With $\nu_\omega$ as in~\eqref{eq:assA}, the definition~\eqref{eq:valueFctDef} of $V$ and $V\leq \varphi$ on $\cD$ imply that
  \begin{align*}
    E\big[f(X_{t,x}^\nu(T))|\cF_\tau\big] (\omega)
    & \leq F\big( \tau(\omega), X_{t,x}^\nu(\tau)(\omega);\nu_\omega \big) \\
    & \leq V\big( \tau(\omega), X_{t,x}^\nu(\tau)(\omega),M(\tau)(\omega)\big) \\
    & \leq \varphi\big(\tau(\omega), X_{t,x}^\nu(\tau)(\omega),M(\tau)(\omega)\big)\quad P\as
  \end{align*}
  After noting that the left hand side is integrable by~\eqref{eq:controlsIntegrable}, the result follows by taking expectations.\\

  (ii)~Fix $(t,x,m)\in\hbS$ and let $\eps>0$.

  \emph{1. Countable Cover.}
  Like $\cD$, the set $\cD\cap\bD$ has the property that
  \begin{equation}\label{eq:cDinvariance}
    \big(\tau,X_{t,x}^\nu(\tau),M(\tau)\big)\in \cD\cap\bD\quad P\as;
  \end{equation}
  this follows from~\eqref{eq:domainInvariance} since $\tau$ takes countably many values.
  Replacing $\cD$ by $\cD\cap\bD$ if necessary, we may therefore assume that $\cD\subseteq \bD$.
  Using also that $V\geq \varphi$ on $\cD$, the definition~\eqref{eq:valueFctDef} of $V$ shows that there exists for each $(s,y,n)\in\cD$ some $\nu^{s,y,n}\in\cU(s,y,n)$ satisfying
  \begin{equation}\label{eq:epsOptimizers}
    F(s,y;\nu^{s,y,n})\geq \varphi(s,y,n)-\eps.
  \end{equation}
  Fix one of the points $t_i$ in time. For each $(y,n)\in \cD^i$, the semicontinuity assumptions~\eqref{eq:semicontAssFIN} and Lemma~\ref{le:nbhdForCovering} imply that there exists
  a neighborhood $(y,n)\in B^i(y,n)\subseteq \hat{S}:=S\times\R$ (of size depending on $y,n,i,\eps,\delta$)
  such that
  \begin{equation}\label{eq:coverControls}
    \left.
    \begin{array}{rcl}
      \nu^{t_i,y,n} & \!\!\!\in\!\!\! & \cU(t_i,y',n'+\delta) \\[.2em]
      \varphi(t_i,y',n') & \!\!\!\leq\!\!\! & \varphi(t_i,y,n) +\eps \\[.2em]
      F(t_i,y';\nu^{t_i,y,n}) & \!\!\!\geq\!\!\! & F(t_i,y;\nu^{t_i,y,n})- \eps\!\!
    \end{array}
    \right\}\mbox{ for all }(y',n')\in B^i(y,n)\cap \cD^i.
  \end{equation}
  Here the first inequality may read $-\infty \leq -\infty$.
  We note that $\cD^i\subseteq\hS$ is metric separable for the subspace topology relative to the product topology on $\hS$.
  Therefore, since the family $\{B^i(y,n)\cap\cD^i:\,(y,n)\in \hS\}$ forms an open cover of $\cD^i$,
  there exists a sequence $(y_j,n_j)_{j\geq1}$ in $\hS$ such that $\{B^i(y_j,n_j)\cap\cD^i\}_{j\geq1}$ is a countable subcover of $\cD^i$.
  We set $\nu^i_j:=\nu^{t_i,y_j,n_j}$ and $B^i_j:=B^i(y_j,n_j)$, so that
  \begin{equation}\label{eq:coverProperty}
    \cD^i\subseteq \bigcup_{j\geq1} B^i_j.
  \end{equation}
  We can now define, for $i$ still being fixed, a measurable partition $(A^i_j)_{j\geq1}$ of $\cup_{j\geq1} B^i_j$ by
  \[
    A^i_1:=B^i_1,\quad A^i_{j+1}:=B^i_{j+1}\setminus (B^i_1 \cup\dots\cup B^i_j),\quad j\geq1.
  \]
  Since $A^i_j\subseteq B^i_j$, the inequalities~\eqref{eq:epsOptimizers} and \eqref{eq:coverControls} yield that
  \begin{equation}\label{eq:ProofDppThreeEps}
    F(t_i,y';\nu^i_j)\geq \varphi(t_i,y',n') - 3\eps\quad\mbox{for all }(y',n')\in A^i_j\cap \cD^i.
  \end{equation}

  \emph{2. Concatenation.} Fix an integer $k\geq 1$; we now focus on $(t_i)_{1\leq i\leq k}$. We may assume that
  $t_1< t_2< \dots < t_k$, by eliminating and relabeling some of the $t_i$. We define the $\cF^t_\tau$-measurable sets
  \[
    \Gamma^i_j:=\big\{\tau= t_i \mbox{ and } (X_{t,x}^\nu(t_i),M(t_i))\in A^i_j\big\}\in\cF_{t_i} \quad\mbox{and}\quad
    \Gamma(k):=\bigcup_{1\leq i,j\leq k} \Gamma^i_j.
  \]
  Since the $t_i$ are distinct and $A^i_j\cap A^i_{j'}=\emptyset$ for $j\neq j'$, we have
  $\Gamma^i_j\cap \Gamma^{i'}_{j'}=\emptyset$ for $(i,j)\neq (i',j')$.
  We can then consider the successive concatenation
  \[
    \nu(k):=\nu\otimes_{(t_1,\Gamma^1_1)} \nu^1_1\otimes_{(t_1,\Gamma^1_2)}\nu^1_2\cdots\otimes_{(t_1,\Gamma^1_k)} \nu^1_k\otimes_{(t_2,\Gamma^2_1)} \nu^2_1 \cdots\otimes_{(t_k,\Gamma^k_k)}\nu^k_k,
  \]
  which is to be read from the left with $\nu^a\otimes\nu^b\otimes\nu^c:= (\nu^a\otimes\nu^b)\otimes\nu^c$.
  It follows from an iterated application of~\Beqref{ass:B1} that $\nu(k)$ is well defined and in particular
  $\nu(k)\in\cU_t$. (To understand the meaning of $\nu(k)$, it may be useful to note that in the example considered in~\eqref{eq:concatenationExample}, we would have
  \[
    \nu(k) = \nu\1_{[0,\tau]} + \1_{(\tau,T]} \bigg(\nu\1_{\Omega\setminus\Gamma(k)}
      + \sum_{1\leq i,j\leq k}\nu^i_j\1_{\Gamma^i_j} \bigg);
  \]
  i.e., at time $\tau$, we switch to $\nu^i_j$ on $\Gamma^i_j$.)
  We note that~\eqref{eq:causalityFIN} implies that $X_{t,x}^{\nu(k)}=X_{t,x}^{\nu}$ on $\Omega\setminus \Gamma(k)\in\cF_\tau$ and hence
  \begin{equation}\label{eq:ProofDppGammaComplement}
    E\big[ f(X_{t,x}^{\nu(k)}(T)) \big|\cF_\tau\big]=E\big[ f(X_{t,x}^{\nu}(T)) \big|\cF_\tau\big]\quad\mbox{on } \Omega\setminus \Gamma(k).
  \end{equation}
  Moreover, the fact that $\tau=t_i$ on $\Gamma^i_j$, repeated application of~\eqref{eq:causalityFIN}, and~\eqref{eq:consistencyFIN} show that
  \[
    E\big[ f(X_{t,x}^{\nu(k)}(T)) \big|\cF_\tau\big] \geq F(\tau,X_{t,x}^\nu(\tau);\nu^i_j)\quad\mbox{on }\Gamma^i_j,\quad\mbox{for } 1\leq i,j\leq k,
  \]
  and we deduce via~\eqref{eq:ProofDppThreeEps} that
  \begin{align}\label{eq:ProofDppGamma}
    E\big[ f(X_{t,x}^{\nu(k)}(T)) \big|\cF_\tau\big] \1_{\Gamma(k)}
     & \geq \sum_{1\leq i,j\leq k} F(\tau, X_{t,x}^\nu(\tau); \nu^i_j)\1_{\Gamma^i_j} \nonumber\\
     & \geq \sum_{1\leq i,j\leq k} \big(\varphi(\tau, X_{t,x}^\nu(\tau), M(\tau))-3\eps\big) \1_{\Gamma^i_j} \nonumber\\
     & \geq \varphi(\tau, X_{t,x}^\nu(\tau), M(\tau)) \1_{\Gamma(k)} - 3\eps.
  \end{align}

  \emph{3. Admissibility.} Next, we show that $\nu(k)\in\cU(t,x,m+\delta)$.
  By~\Beqref{ass:B2} there exists, for each pair $1\leq i,j\leq k$, a process $M^i_j=\{M^i_j(u),\, u\in[t_i,T]\}$ such that
  \begin{equation}\label{eq:defMij}
    M^i_j(\cdot)(\omega)%
    =\Big(M_{t_i}^{\nu^i_j}[X_{t,x}^\nu(t_i)(\omega)](\cdot)\Big)(\omega)\mbox{ on }\Gamma^i_j
  \end{equation}
  and such that
  \begin{align*}
   M^{(k)}&:= (M+\delta)\1_{[t,\tau)}\\
   &\phantom{:=}  + \1_{[\tau,T]} \bigg((M+\delta)\1_{\Omega\setminus\Gamma(k)}
      + \sum_{1\leq i,j\leq k}\big[M^i_j -M^i_j(t_i) + M(t_i)+\delta\big]\1_{\Gamma^i_j} \bigg)
  \end{align*}
  is an element of $\cM_{t,m+\delta}$.   We note that $M^{(k)}(T)\geq M^i_j(T)$ on $\Gamma^i_j$ since $M^i_j(t_i)\leq M(t_i)+\delta$ on $\Gamma^i_j$ as a result of the first condition in~\eqref{eq:coverControls}. Hence, using~\eqref{eq:causalitySwitchFIN} and~\eqref{eq:defMij}, we have
  \begin{equation}\label{eq:proofDPPMartOnGamma}
    g(X_{t,x}^{\nu(k)}(T))=g\big(X_{t_i,X_{t,x}^\nu(t_i)}^{\nu^i_j}(T)\big)\leq M^i_j(T)\leq M^{(k)}(T)\quad\mbox{on }\Gamma^i_j.
  \end{equation}
  This holds for all $1\leq i,j\leq k$. %
  On the other hand, using~\eqref{eq:causalityFIN} and that $M\in\cM^+_{t,m,x}(\nu)$, we have that
  \begin{equation}\label{eq:proofDPPuseAdmissibility}
    g(X_{t,x}^{\nu(k)}(T))=g(X_{t,x}^{\nu}(T))\leq M(T)\leq M(T)+\delta =M^{(k)}(T)\quad\mbox{on }\Omega\setminus\Gamma(k).
  \end{equation}
  Combining~\eqref{eq:proofDPPMartOnGamma} and~\eqref{eq:proofDPPuseAdmissibility}, we have $g(X_{t,x}^{\nu(k)}(T))\leq M^{(k)}(T)$ on $\Omega$ and so Lemma~\ref{le:reformulationOfConstraint} yields that $\nu(k)\in\cU(t,x,m+\delta)$.

  \emph{4. $\eps$-Optimality.} We may assume that
  either the positive or the negative part of $\varphi(\tau,X_{t,x}^\nu(\tau),M(\tau))$ is integrable, as otherwise our claim~\eqref{eq:DPPii} is trivial by~\eqref{eq:expConvention}. Using the definition~\eqref{eq:valueFctDef} of $V$ as well as~\eqref{eq:ProofDppGammaComplement} and~\eqref{eq:ProofDppGamma}, we have that
  \begin{align*}%
   V(t,x,m+\delta)
     &\geq E\big[f(X_{t,x}^{\nu(k)}(T))\big] \\
     & = E\big[E\big[ f(X_{t,x}^{\nu(k)}(T)) \big|\cF_\tau\big]\big] \\
     & \geq E\big[\varphi(\tau,X_{t,x}^\nu(\tau),M(\tau))\1_{\Gamma(k)}\big] - 3\eps
        + E\big[f(X_{t,x}^{\nu}(T)) \1_{\Omega\setminus \Gamma(k)}\big]
  \end{align*}
  for every $k\geq1$. Letting $k\to\infty$, we have $\Gamma(k)\uparrow \Omega$ $P$-a.s.\ by~\eqref{eq:cDinvariance} and~\eqref{eq:coverProperty}; therefore,
  \[
    E\big[f(X_{t,x}^{\nu}(T)) \1_{\Omega\setminus \Gamma(k)}\big]\to0
  \]
  by dominated convergence and~\eqref{eq:controlsIntegrable}.
  Moreover, monotone convergence yields
  \[
    E\big[\varphi(\tau,X_{t,x}^\nu(\tau),M(\tau))\1_{\Gamma(k)}\big] \to E\big[\varphi(\tau,X_{t,x}^\nu(\tau),M(\tau))\big];
  \]
  to see this, consider separately the cases where the positive or the negative part of
  $\varphi(\tau, X_{t,x}^\nu(\tau), M(\tau))$ are integrable.
  Hence we have shown that
  \[
    V(t,x,m+\delta) \geq E[\varphi(\tau,X_{t,x}^\nu(\tau),M(\tau))] - 3\eps.
  \]
  As $\eps>0$ was arbitrary, this completes the proof of (ii).\\

  (ii')~Fix $(t,x,m)\in\hbS$ and let $\eps>0$. In contrast to the proof of~(ii), we shall cover a subset of $\hbS$ rather than $S\times\R$. By~\eqref{eq:domainInvariance}, we may again assume that $\cD\cap\bD=\cD$.
  Since $V\geq \varphi$ on $\cD$, the definition~\eqref{eq:valueFctDef} of $V$ shows that there exists for each $(s,y,n)\in\cD$ some $\nu^{s,y,n}\in\cU(s,y,n)$ satisfying
  \begin{equation}\label{eq:epsOptimizers'}
    F(s,y;\nu^{s,y,n})\geq \varphi(s,y,n)-\eps.
  \end{equation}
  Given $(s,y,n)\in \cD$, the semicontinuity assumptions~\eqref{eq:semicontAss} and a variant of Lemma~\ref{le:nbhdForCovering} (including the time variable) imply that there exists a set $B(s,y,n)\subseteq \hbS$ of the form
  \[
    B(s,y,n)= \big((s-r,s]\cap [0,T]\big)\times B_r(y,n),
  \]
  where $r>0$ and $B_r(y,n)$ is an open ball in $S\times\R_+$,
  such that
  \begin{equation}\label{eq:coverControls'}
    \left.
    \begin{array}{rcl}
      \nu^{s,y,n} & \!\!\!\in\!\!\! & \cU(s',y',n'+\delta) \\[.2em]
      \varphi(s',y',n') & \!\!\!\leq\!\!\! & \varphi(s,y,n) +\eps \\[.2em]
      F(s',y';\nu^{s,y,n}) & \!\!\!\geq\!\!\! & F(s,y;\nu^{s,y,n})- \eps\!\!
    \end{array}
    \right\}\;\mbox{for all }(s',y',n')\in B(s,y,n)\cap \cD.
  \end{equation}
  Note that we have exploited Assumption~\Bbiseqref{ass:B0'}, which forces us to use the half-closed interval for the time variable. As a result,
  $B(s,y,n)$ is open for the product topology on $[0,T]\times S\times\R$ if $S$ and $\R$ are given the usual topology and
  $[0,T]$ is given the topology generated by the half-closed intervals (the upper limit topology). Let us denote the latter topological space by $[0,T]^*$.

  Like the Sorgenfrey line, $[0,T]^*$ is a Lindel\"of space.
  Let $\cD'$ be the canonical projection of $\cD\subseteq \hbS\equiv [0,T]\times S\times\R$ to $S\times\R$. Then $\cD'$ is again $\sigma$-compact. It is easy to see that the product of a Lindel\"of space with a $\sigma$-compact space is again Lindel\"of; in particular, $[0,T]^*\times \cD'$ is Lindel\"of. Moreover, since $\cD$ was $\sigma$-compact for the original topology on $\hbS$ and the topology of $[0,T]^*$ is finer than the original one, we still have that $\cD\subseteq[0,T]^*\times \cD'$ is a countable union of closed subsets and therefore $\cD$ is Lindel\"of also for the new topology.

  By the Lindel\"of property, the cover $\{B(s,y,n)\cap\cD:\,(s,y,n)\in \hbS\}$ of $\cD$ admits a countable subcover $\{B(s_j,y_j,n_j)\cap\cD\}_{j\geq1}$.
  We set $\nu_j:=\nu^{s_j,y_j,n_j}$ and $B_j:=B(s_j,y_j,n_j)$, then $\cD\subseteq \cup_{j\geq1} B_j$
  and
  \[
    A_1:=B_1,\quad A_{j+1}:=B_{j+1}\setminus (B_1 \cup\dots\cup B_j),\quad j\geq1
  \]
  defines a measurable partition of $\cup_{j\geq1} B_j$. Since $A_j\subseteq B_j$, the inequalities~\eqref{eq:epsOptimizers'} and \eqref{eq:coverControls'} yield that
  \begin{equation*}%
    F(s',y';\nu_j)\geq \varphi(s',y',n') - 3\eps\quad\mbox{for all }(s',y',n')\in A_j\cap \cD.
  \end{equation*}
  Similarly as above, we first fix $k\geq 1$, define the $\cF^t_\tau$-measurable sets
  \[
    \Gamma_j:=\big\{(\tau,X_{t,x}^\nu(\tau),M(\tau))\in A_j\big\}\quad\mbox{and}\quad
    \Gamma(k):=\bigcup_{1\leq j\leq k} \Gamma_j
  \]
  and set
  $
    \nu(k):=\big(\cdots\big(\big(\nu\otimes_{(\tau,\Gamma_1)} \nu_1\big)\otimes_{(\tau,\Gamma_2)} \nu_2\big)\cdots\otimes_{(\tau,\Gamma_k)}\nu_k\big).
  $
 To check that
  \[
    E\big[ f(X_{t,x}^{\nu(k)}(T)) \big|\cF_\tau\big] \geq F(\tau,X_{t,x}^\nu(\tau);\nu_j)\quad\mbox{on }\Gamma_j\quad\mbox{for } 1\leq j\leq k,
  \]
  we use that $\tau\leq s_j$ on $\Gamma_j$, so that we can apply~\eqref{eq:consistency} with the stopping time $\tilde{\tau}:=\tau\wedge s_j$ satisfying $\|\tilde{\tau}\|_{L^\infty}\leq s_j$ and thus $\nu_j\in\cU_{s_j}\subseteq \cU_{\|\tilde{\tau}\|_{L^\infty}}$; c.f.~\Bbiseqref{ass:B0'}. The rest of the proof is analogous to the above.
\end{proof}

\begin{remark}\label{rk:LindeloefAim}{\rm
  The assumption on $\sigma$-compactness in Theorem~\ref{th:DPP}(ii') was used only to ensure
  the Lindel\"of property of $\cD\cap\bD$ for the topology introduced in the proof. Therefore, any other assumption ensuring this will do as well.%
  }
\end{remark}

Let us record a slight generalization of Theorem~\ref{th:DPP}(ii),(ii') to the case of controls which are not necessarily admissible at the given point $(t,x,m)$. The intuition for this result is that the dynamic programming principle holds as before if we use such controls for a sufficiently short time (as formalized by condition~\eqref{eq:domainInvarianceRelax} below) and then switch to admissible ones. More precisely, the proof also exploits the relaxation which is anyway present in~\eqref{eq:DPPii}. We use the notation of Theorem~\ref{th:DPP}.

\begin{corollary}\label{co:DPPrelaxed}
  Let the assumptions of Theorem~\ref{th:DPP}(ii) hold true except that $\nu\in\cU_t$ and $M\in\cM_{t,m}$ are not necessarily in $\cU(t,x,m)$ and $\cM^+_{t,m,x}(\nu)$, respectively. In addition, assume that
  \begin{equation}\label{eq:domainInvarianceRelax}
    \big(\tau,X_{t,x}^\nu(\tau),M(\tau)\big)\in\bD\quad P\as
  \end{equation}
  Then the conclusion~\eqref{eq:DPPii} of Theorem~\ref{th:DPP}(ii) still holds true. Moreover, the same generalization holds true for Theorem~\ref{th:DPP}(ii').
\end{corollary}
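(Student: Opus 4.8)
The strategy is to rerun the proof of Theorem~\ref{th:DPP}(ii) (and, mutatis mutandis, (ii')) almost verbatim, splitting the relaxation as $\delta=\tfrac\delta2+\tfrac\delta2$ and replacing only the pathwise admissibility step by an argument in expectation that spends the reserved $\tfrac\delta2$.

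First observe that Steps~1 and~2 of that proof use admissibility of $\nu$ nowhere: they only need that $(\tau,X^\nu_{t,x}(\tau),M(\tau))\in\cD\cap\bD$ $P\as$, which is now precisely the hypothesis~\eqref{eq:domainInvarianceRelax}. Hence one may still reduce to $\cD\subseteq\bD$, build the countable measurable partition $(A^i_j)$ of a cover of $\cD^i$ with $\eps$-optimizers $\nu^i_j$ satisfying~\eqref{eq:epsOptimizers}, and --- with $\delta$ replaced throughout by $\tfrac\delta2$ in~\eqref{eq:coverControls} --- obtain $\nu^i_j\in\cU(t_i,y',n'+\tfrac\delta2)$ on $B^i_j\cap\cD^i$, the finite concatenations $\nu(k)\in\cU_t$ from~\Beqref{ass:B1}, and the auxiliary martingales $M^{(k)}\in\cM_{t,m+\delta/2}$ from~\Beqref{ass:B2}, all exactly as before. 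In particular $\Gamma(k)\uparrow\Omega$ $P\as$, the conditional-expectation bounds~\eqref{eq:ProofDppGammaComplement}--\eqref{eq:ProofDppGamma} hold, and, as in~\eqref{eq:proofDPPMartOnGamma}, $g(X^{\nu(k)}_{t,x}(T))\le M^{(k)}(T)$ on $\Gamma(k)$.

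The sole gap is that $\nu(k)$ need no longer be admissible: on $\Omega\setminus\Gamma(k)$ one has $X^{\nu(k)}_{t,x}=X^\nu_{t,x}$ and, since $M\notin\cM^+_{t,m,x}(\nu)$, no pathwise estimate is available there. I would argue instead in expectation. Using $E[M^{(k)}(T)]=m+\tfrac\delta2$ and $M^{(k)}(T)=M(T)+\tfrac\delta2$ on $\Omega\setminus\Gamma(k)$,
\begin{align*}
  G(t,x;\nu(k))=E\big[g(X^{\nu(k)}_{t,x}(T))\big]
  &\le E\big[M^{(k)}(T)\1_{\Gamma(k)}\big]+E\big[g(X^\nu_{t,x}(T))\1_{\Omega\setminus\Gamma(k)}\big]\\
  &= m+\tfrac\delta2+E\Big[\big(g(X^\nu_{t,x}(T))-M(T)-\tfrac\delta2\big)\1_{\Omega\setminus\Gamma(k)}\Big].
\end{align*}
Now $(g(X^\nu_{t,x}(T))-M(T)-\tfrac\delta2)^+$ is integrable by~\eqref{eq:controlsIntegrable} and the integrability of the martingale $M$, and $\1_{\Omega\setminus\Gamma(k)}\downarrow0$ $P\as$; so dominated convergence yields $k_0$ with $E[(g(X^\nu_{t,x}(T))-M(T)-\tfrac\delta2)^+\1_{\Omega\setminus\Gamma(k_0)}]<\tfrac\delta2$. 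Since the sets $\Omega\setminus\Gamma(k)$ decrease in $k$, the display above gives $G(t,x;\nu(k))<m+\delta$, i.e.\ $\nu(k)\in\cU(t,x,m+\delta)$, for every $k\ge k_0$.

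With admissibility at level $m+\delta$ secured for all $k\ge k_0$, Step~4 goes through unchanged: $V(t,x,m+\delta)\ge E[f(X^{\nu(k)}_{t,x}(T))]\ge E[\varphi(\tau,X^\nu_{t,x}(\tau),M(\tau))\1_{\Gamma(k)}]-3\eps+E[f(X^\nu_{t,x}(T))\1_{\Omega\setminus\Gamma(k)}]$, and letting $k\to\infty$ (dominated convergence for the $f$-term and monotone convergence for the $\varphi$-term, the positive and negative parts treated separately) and then $\eps\downarrow0$ gives~\eqref{eq:DPPii}. For the statement attached to Theorem~\ref{th:DPP}(ii') one proceeds identically, using the Lindel\"of cover of $\cD\cap\bD\subseteq[0,T]^*\times S\times\R$ in place of the cover of $S\times\R$ and invoking~\eqref{eq:domainInvarianceRelax} where~\eqref{eq:domainInvariance} was used. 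The only genuine obstacle is exactly this recovery step: the concatenated control is admissible only in expectation off $\Gamma(k)$, and true admissibility at the relaxed level is bought by exhausting $\Omega$ with $\Gamma(k)\uparrow\Omega$ and paying the reserved $\tfrac\delta2$ through the martingale identity $E[M^{(k)}(T)]=m+\tfrac\delta2$; everything else is a faithful copy of the proof of Theorem~\ref{th:DPP}.
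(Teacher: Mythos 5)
Your proof is correct and takes essentially the same approach as the paper's: identify that only the admissibility step (formerly \eqref{eq:proofDPPuseAdmissibility}) uses $M\in\cM^+_{t,m,x}(\nu)$, replace the pathwise bound on $\Omega\setminus\Gamma(k)$ by an estimate in expectation, and use $\Gamma(k)\uparrow\Omega$ together with the integrability from \eqref{eq:controlsIntegrable} to absorb the error. Your upfront $\tfrac\delta2$-splitting and the computation of $E[M^{(k)}(T)\1_{\Gamma(k)}]$ directly from $E[M^{(k)}(T)]=m+\tfrac\delta2$ and the explicit form of $M^{(k)}(T)$ on $\Omega\setminus\Gamma(k)$ (rather than via optional stopping at $\tau$ as in the paper) is a minor tightening of the bookkeeping, yielding admissibility at level $m+\delta$ directly where the paper arrives at $m+3\delta$ and then notes that $\delta$ was arbitrary.
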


\begin{proof}
  Let us inspect the proof of Theorem~\ref{th:DPP}(ii). Using directly~\eqref{eq:domainInvarianceRelax} rather than appealing to~\eqref{eq:domainInvariance}, the construction of the covering in Step~1 remains unchanged and the same is true for the concatenation in Step~2. In Step~3, we proceed as above up to and including~\eqref{eq:proofDPPMartOnGamma}.
  Note that~\eqref{eq:proofDPPuseAdmissibility} no longer holds as it used the assumption that $M\in\cM^+_{t,m,x}(\nu)$. However, in view of~\eqref{eq:proofDPPMartOnGamma} and $X_{t,x}^{\nu(k)}(T)=X_{t,x}^{\nu}(T)$ on $\Omega\setminus\Gamma(k)$, we still have
  \[
    E\big[g(X_{t,x}^{\nu(k)}(T))\big]\leq E\big[M^{(k)}(T)\1_{\Gamma(k)}\big] + E\big[g(X_{t,x}^{\nu}(T))\1_{\Omega\setminus\Gamma(k)}\big].
  \]
  Since $g(X_{t,x}^{\nu}(T))$ is integrable by~\eqref{eq:controlsIntegrable} and $\Gamma(k)\uparrow\Omega$, the latter expectation is bounded by $\delta$ for large $k$.
  Moreover, $\Gamma(k)\in\cF_\tau$, the martingale property of $M^{(k)}$, and the fact that $M^{(k)}=M+\delta$ on $[0,\tau]$ yield that
  \[
    E\big[M^{(k)}(T)\1_{\Gamma(k)}\big]=E\big[M^{(k)}(\tau)\1_{\Gamma(k)}\big] = E\big[(M(\tau)+\delta)\1_{\Gamma(k)}\big].
  \]
  Since $E[M(\tau)]=m$, the right hand side is dominated by $m+2\delta$ for large $k$.
  Together, we conclude that
  \[
    E[g(X_{t,x}^{\nu(k)}(T))]\leq m+3\delta;
  \]
  i.e., $\nu(k)\in\cU(t,x,m+3\delta)$ for all large $k$.
  Step~4 of the previous proof then applies as before (recall that $f(X_{t,x}^{\nu}(T))$ is integrable by~\eqref{eq:controlsIntegrable}), except that the changed admissibility of $\nu(k)$ now results in
  \[
    V(t,x,m+3\delta) \geq E[\varphi(\tau,X_{t,x}^\nu(\tau),M(\tau))].
  \]
  However, since $\delta>0$ was arbitrary, this is the same as~\eqref{eq:DPPii}. The argument to extend Theorem~\ref{th:DPP}(ii') is analogous.
\end{proof}

While we shall see that the relaxation $\delta>0$ in~\eqref{eq:DPPii} is harmless for the derivation of the Hamilton-Jacobi-Bellman equation, it is nevertheless interesting to know when the $\delta$ can be omitted; i.e., when $V$ is right continuous in $m$. The following sufficient condition will be used when we consider state constraints.

\begin{lemma}\label{lem: cond for right continuity in m}
  Let $(t,x,m)\in\bD$.
  For each $\delta>0$ there is $\nu^\delta\in\cU(t,x,m+\delta)$ such that
  \[
   F(t,x;\nu^\delta)\geq \delta^{-1}\wedge V(t,x,m+\delta)-\delta.
  \]
  Let $\delta_0>0$ and assume that for all $0<\delta\leq\delta_0$ there exists $\tilde{\nu}^\delta\in\cU(t,x,m)$ such that
  \[
    \lim_{\delta\downarrow0}P\big\{ X_{t,x}^{\nu^\delta}(T)\neq X_{t,x}^{\tilde{\nu}^\delta}(T) \big\}=0
  \]
  and such that the set
  \begin{equation}\label{eq: cond unif integrability f nu delta}
    \big\{\big[f(X_{t,x}^{\nu^\delta}(T))-f(X_{t,x}^{\tilde{\nu}^\delta}(T))\big]^+:\, 0<\delta\leq\delta_0\big\}\;\subseteq\; L^1(P)
  \end{equation}
  is uniformly integrable. Then $m'\mapsto V(t,x,m')$ is right continuous at $m$.
\end{lemma}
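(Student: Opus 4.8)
The plan is to establish right continuity of $m'\mapsto V(t,x,m')$ at $m$ by combining the monotonicity of $V$ in $m$ with a matching lower bound obtained from the controls $\tilde\nu^\delta$. Since $\cU(t,x,m')\subseteq \cU(t,x,m'')$ for $m'\leq m''$, the function $m'\mapsto V(t,x,m')$ is nondecreasing, so $\lim_{\delta\downarrow 0}V(t,x,m+\delta)\geq V(t,x,m)$ is automatic; the whole content is the reverse inequality $\liminf_{\delta\downarrow0}V(t,x,m+\delta)\leq V(t,x,m)$ (or rather $\leq$ when the limit is finite, with the truncation $\delta^{-1}\wedge$ handling the case $V(t,x,m+\delta)\to\infty$, which one rules out separately). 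Note also that the first assertion of the lemma — existence of $\nu^\delta\in\cU(t,x,m+\delta)$ with $F(t,x;\nu^\delta)\geq \delta^{-1}\wedge V(t,x,m+\delta)-\delta$ — is essentially immediate from the definition of the supremum in~\eqref{eq:valueFctDef}: if $V(t,x,m+\delta)>-\infty$ pick a near-optimizer and truncate, and if $V(t,x,m+\delta)=-\infty$ the inequality is vacuous; so I would dispatch that in one line.

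First I would fix $\delta\in(0,\delta_0]$ and use $\tilde\nu^\delta\in\cU(t,x,m)$, so that by definition $V(t,x,m)\geq F(t,x;\tilde\nu^\delta)=E[f(X_{t,x}^{\tilde\nu^\delta}(T))]$. Then I would write
\[
  F(t,x;\nu^\delta)-F(t,x;\tilde\nu^\delta)
   = E\big[f(X_{t,x}^{\nu^\delta}(T))-f(X_{t,x}^{\tilde\nu^\delta}(T))\big]
   \leq E\big[\big(f(X_{t,x}^{\nu^\delta}(T))-f(X_{t,x}^{\tilde\nu^\delta}(T))\big)^+\big].
\]
The random variable inside the last expectation is supported on the event $\{X_{t,x}^{\nu^\delta}(T)\neq X_{t,x}^{\tilde\nu^\delta}(T)\}$, whose probability tends to $0$ as $\delta\downarrow0$; combined with the uniform integrability hypothesis~\eqref{eq: cond unif integrability f nu delta}, this forces
\[
  \limsup_{\delta\downarrow0}\ E\big[\big(f(X_{t,x}^{\nu^\delta}(T))-f(X_{t,x}^{\tilde\nu^\delta}(T))\big)^+\big]=0,
\]
since a uniformly integrable family of nonnegative random variables whose supports have vanishing probability has vanishing expectation (the standard $\sup_A \int_A$ characterization of uniform integrability). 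Therefore $\limsup_{\delta\downarrow0}\big(F(t,x;\nu^\delta)-F(t,x;\tilde\nu^\delta)\big)\leq 0$.

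Putting the pieces together: for every $\delta\in(0,\delta_0]$,
\[
  \delta^{-1}\wedge V(t,x,m+\delta)-\delta \leq F(t,x;\nu^\delta)
  \leq F(t,x;\tilde\nu^\delta) + \big(F(t,x;\nu^\delta)-F(t,x;\tilde\nu^\delta)\big)^+
  \leq V(t,x,m) + r(\delta),
\]
where $r(\delta):=\big(F(t,x;\nu^\delta)-F(t,x;\tilde\nu^\delta)\big)^+\to 0$. Letting $\delta\downarrow0$ gives $\liminf_{\delta\downarrow0}\big(\delta^{-1}\wedge V(t,x,m+\delta)\big)\leq V(t,x,m)$. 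Since $V(t,x,m)\leq V(t,x,m+\delta)<\infty$ eventually would be needed: more precisely, if $V(t,x,m)=+\infty$ there is nothing to prove, while if $V(t,x,m)<\infty$ the displayed bound shows $\delta^{-1}\wedge V(t,x,m+\delta)$ stays bounded, hence for small $\delta$ the truncation is inactive and $V(t,x,m+\delta)\leq V(t,x,m)+r(\delta)$; combined with monotonicity this yields $\lim_{\delta\downarrow0}V(t,x,m+\delta)=V(t,x,m)$, i.e.\ right continuity at $m$. The only mildly delicate point — the ``main obstacle'' — is the careful handling of the truncation $\delta^{-1}\wedge(\cdot)$ together with the possibility that some $V(t,x,m+\delta)$ equals $+\infty$ or that $V(t,x,m)=+\infty$; once one observes that right continuity is trivially true at $+\infty$ and that the bound forces the truncation to become inactive in the finite case, everything reduces to the elementary uniform-integrability estimate above.
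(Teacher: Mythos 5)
Your proof is correct and follows essentially the same route as the paper: use $V(t,x,m)\geq F(t,x;\tilde\nu^\delta)$, decompose the gap $F(t,x;\nu^\delta)-F(t,x;\tilde\nu^\delta)$ as an expectation supported on $\{X_{t,x}^{\nu^\delta}(T)\neq X_{t,x}^{\tilde\nu^\delta}(T)\}$, and send it to zero via uniform integrability and the vanishing probability of that event. Your extra discussion of when the truncation $\delta^{-1}\wedge(\cdot)$ becomes inactive is a slightly more explicit version of what the paper leaves implicit, but the argument is the same.
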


\begin{proof}
  Since $m'\mapsto\cU(t,x,m')$ is increasing, we have that $(t,x,m)\in\bD$ implies $\cU(t,x,m+\delta)\neq \emptyset$ for all $\delta \geq0$. Hence $\nu^\delta$ exists; of course, the truncation at $\delta^{-1}$ is necessary only if $V(t,x,m+\delta)=\infty$. Moreover, the monotonicity of $m'\mapsto V(t,x,m')$ implies that the right limit $V(t,x,m+)$ exists and that
  $V(t,x,m+)\geq V(t,x,m)$; it remains to prove the opposite inequality.
  Let $0<\delta\leq \delta_0$ and set $A^\delta:=\{ X_{t,x}^{\nu^\delta}(T)\neq X_{t,x}^{\tilde{\nu}^\delta}(T)\}$, then
  \begin{align*}
    V(t,x,m)
    & \geq F(t,x,\tilde{\nu}^\delta) \\
    & = F(t,x,\nu^\delta) - E\big[1_{A^\delta} \big(f(X_{t,x}^{\nu^\delta}(T))-f(X_{t,x}^{\tilde{\nu}^\delta}(T))\big) \big] \\
    & \geq \delta^{-1} \wedge V(t,x,m+\delta) - \delta - E\big[1_{A^\delta} \big(f(X_{t,x}^{\nu^\delta}(T))-f(X_{t,x}^{\tilde{\nu}^\delta}(T))\big)^+ \big].
  \end{align*}
  Letting $\delta\downarrow 0$, we deduce by~\eqref{eq: cond unif integrability f nu delta} that
  $V(t,x,m) \geq V(t,x,m+)$.
\end{proof}

\begin{remark}{\rm
  The integrability assumption~\eqref{eq: cond unif integrability f nu delta} is clearly satisfied if $f$ is bounded. In the general case, it may be useful to consider the value function for a truncated function $f$ in a first step.
  }
\end{remark}

\begin{remark}{\rm
  Our results can be generalized to a setting with multiple constraints. Given $N\in\N$ and $m\in\R^N$, let
  $$
    \cU(t,x,m):=\big\{\nu\in\cU_t:\, G^{i}(t,x;\nu)\leq m^{i}\mbox{ for } i=1,\dots, N \big\},
  $$
  where $G^{i}(t,x;\nu):=E[g^{i}(X_{t,x}^\nu(T))]$ for some measurable function $g^{i}$. In this case, $\cM_{t,0}$ is defined as the set  of c\`adl\`ag $N$-dimensional martingales   $M=\{M(s),\,s\in[t,T]\}$ with initial value $M(t)=0$, and
  \[
    \cM_{t,m}:=\{m+M:\, M\in\cM_{t,0}\},\quad m\in\R^{N}.
  \]
  This generalization, which has been considered in~\cite{BV12} within the framework of stochastic target problems with controlled loss,
  also allows to impose constraints at finitely many intermediate times $0\leq T_{1}\le T_{2}\le \cdots \le T$. Indeed, we can increase the dimension of the state process and add the components  $X_{t,x}^\nu(\cdot \wedge T_{j})$.
  }
\end{remark}

\section{Application to State Constraints}\label{sec: DPP state constraint}

We consider an open set $\cO\subseteq S:=\R^d$ and study the stochastic control problem under the constraint that the state process has to stay in $\cO$. Namely, we consider the value function
\begin{eqnarray}\label{eq: def V state constraint}
  \bar{V}(t,x):= \sup_{\nu\in\bar \cU(t,x)} F(t,x;\nu),\quad (t,x)\in\bS,
\end{eqnarray}
where
\[
  \bar{\cU}(t,x):=\big\{\nu\in\cU_t:\, X_{t,x}^\nu(s)\in \cO\mbox{ for all }s\in [t,T],\,P\as\big\}.
\]
In the following discussion we assume that, for all $(t,x)\in\bS$ and $\nu\in\cU_t$,
\begin{eqnarray}
  &\mbox{$X_{t,x}^\nu$ has continuous paths;}& \label{eq:contPaths}\\
  &\mbox{$(t,x)\mapsto X^\nu_{t,x}(r)$ is continuous in probability, uniformly in $r$;}& \label{eq:contInProbab}\\
  &\mbox{$\bar{\cU}(t,x)\neq\emptyset$ for $(t,x)\in[0,T]\times \cO$.}& \label{eq:fullDomain}
\end{eqnarray}
Explicitly, the condition~\eqref{eq:contInProbab} means that $(t_n,x_n)\to (t,x)$ implies
\[
  \sup_{r\in[0,T]} d\big(X^\nu_{t_n,x_n}(r),X^\nu_{t,x}(r)\big)\to0\quad\mbox{ in probability},
\]
where we set $X_{t,x}^\nu(r):=x$ for $r<t$,  $d(\cdot,\cdot)$ denotes the Euclidean metric, and it is implicitly assumed that $\nu\in\cU_{t_n}$ for all $n$.
We shall augment the state process so that the state constraint becomes a special case of an expectation constraint. To this end, we introduce the distance function
$d(x):=\inf \{d(x,x'):\,x'\in S\setminus\cO\}$ for $x\in S$
and the auxiliary process
\begin{equation}\label{eq:defY}
  Y^\nu_{t,x,y}(s):=y\wedge\inf_{r\in[t,s]} d(X_{t,x}^\nu(r)), \quad s\in[t,T],\quad y\in[0,\infty).
\end{equation}
By~\eqref{eq:contPaths}, each trajectory $\{X_{t,x}^\nu(r)(\omega),\,r\in[t,T]\}\subseteq S$ is compact; therefore, it has strictly positive distance to $S\setminus\cO$ whenever it is contained in $\cO$:
\[
  \{X_{t,x}^\nu(r)(\omega),\,r\in[t,T]\}\subseteq \cO \quad\mbox{if and only if}\quad Y^\nu_{t,x,1}(T)(\omega)>0.
\]
We consider the augmented state process
\[
  \bar{X}_{t,x,y}^\nu(\cdot):=\big(X_{t,x}^\nu(\cdot),Y_{t,x,y}^\nu(\cdot)\big)
\]
on the state space $S\times [0,\infty)$, then
\[
  E[g(\bar{X}_{t,x,y}^\nu(T))]=P\{Y^\nu_{t,x,y}(T)\leq 0\}\quad\mbox{by setting}\quad g(x,y):=\1_{(-\infty,0]}(y)
\]
for $(x,y)\in S\times [0,\infty)$. Now the state constraint may be expressed as $E[g(\bar{X}_{t,x,1}^\nu(T))]\leq0$
 and therefore
\[
  \bar \cU(t,x)=\cU(t,x,1,0) \quad\mbox{and}\quad \bar V(t,x)=V(t,x,1,0);
\]
of course, the value $1$ may be replaced by any number $y>0$. Here and in the sequel, we use the notation from the previous section applied to the controlled state process $\bar{X}$ on $S\times [0,\infty)$; i.e., we tacitly replace the variable $x$ by $(x,y)$ to define the set $\cU(t,x,y,m)$ of admissible controls and the associated value function $V(t,x,y,m)$.

One direction of the dynamic programming principle will be a consequence of the following counterpart of Assumption~\ref{ass:A}.

\begin{assumptionBar}\label{ass:Abar}
  For all $(t,x)\in\bS$, $\nu\in\bar \cU(t,x)$, $\tau\in\cT^t$ and $P$-a.e.\ $\omega\in\Omega$, there exists $\nu_\omega\in\bar \cU(\tau(\omega),X_{t,x}^\nu(\tau)(\omega))$ such that
  \[
    E\big[f(X_{t,x}^\nu(T))\big|\cF_\tau\big] (\omega) \leq F\big( \tau(\omega), X_{t,x}^\nu(\tau)(\omega);\nu_\omega \big).
  \]
\end{assumptionBar}

The more difficult direction of the dynamic programming principle will be inferred from Theorem~\ref{th:DPP} under a right-continuity condition; we shall exemplify in the subsequent section how to verify this condition.

\begin{theorem}\label{thm: DPP state constraint}
  Consider $(t,x)\in\bS$ and a family $\{\tau^\nu,\,\nu\in\bar \cU(t,x)\}\subseteq\cT^t$.
  \begin{enumerate}[topsep=3pt, partopsep=0pt, itemsep=1pt,parsep=2pt]
   \item Let Assumption~$\bar{A}$ hold true and let $\phi: \bS\to[-\infty,\infty]$ be a measurable function such that $\bar V\leq \phi$. Then $E \big[ \phi(\tau^\nu,X_{t,x}^\nu(\tau^\nu))^- \big]<\infty$ for all $\nu\in\bar \cU(t,x)$ and
      \begin{equation*}%
        \bar V(t,x) \leq \sup_{\nu\in\bar \cU(t,x)} E \big[\phi(\tau^\nu,X_{t,x}^\nu(\tau^\nu)) \big].
      \end{equation*}
  \item Let Assumption~\ref{ass:B'} hold true for the state process $\bar{X}$ on $S\times [0,\infty)$ and let
    \eqref{eq:contPaths}--\eqref{eq:fullDomain} hold true. Moreover, assume that
    \begin{equation}\label{eq:rightContCondition}
      V(t,x,1,0)=V(t,x,1,0+)
    \end{equation}
    and that $(s',x')\mapsto F(s',x';\nu)$ is l.s.c.\ on $[0,t_0]\times\cO$ for all $t_0\in[t,T]$ and $\nu\in\cU_{t_0}$.
    Then
    \begin{equation}\label{eq:DPPSCii}
      \bar V(t,x) \geq \sup_{\nu\in\bar \cU(t,x)} E \big[ \phi(\tau^\nu,X_{t,x}^\nu(\tau^\nu))\big]
    \end{equation}
    for any u.s.c.\ function $\phi: \bS\to[-\infty,\infty)$ such that $\bar V\geq \phi$.
  \end{enumerate}
\end{theorem}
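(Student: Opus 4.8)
The plan is to reduce Theorem~\ref{thm: DPP state constraint} to Theorem~\ref{th:DPP} applied to the augmented state process $\bar X$, exploiting the identification $\bar\cU(t,x)=\cU(t,x,1,0)$ and $\bar V(t,x)=V(t,x,1,0)$ established above.

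For part (i), I would fix $\nu\in\bar\cU(t,x)=\cU(t,x,1,0)$ and apply Theorem~\ref{th:DPP}(i) with the martingale $M\equiv 0$ (note $g(\bar X^\nu_{t,x,1}(T))=\1_{\{Y^\nu_{t,x,1}(T)\le 0\}}=0$ a.s. since $\nu$ is admissible, so $M\equiv 0\in\cM^+_{t,1,(x,1)}(\nu)$), with $\tau:=\tau^\nu$, and with $\cD:=\bS\times\{1\}\times\{0\}$. The hypothesis is Assumption~$\bar A$, which is exactly Assumption~\ref{ass:A} specialized to the augmented process and the level $m=0$ (using that $\bar X^\nu_{t,x,1}(\tau)=(X^\nu_{t,x}(\tau),Y^\nu_{t,x,1}(\tau))$ with $Y^\nu_{t,x,1}(\tau)>0$ P-a.s., so that each $\nu_\omega\in\bar\cU(\tau(\omega),X^\nu_{t,x}(\tau)(\omega))\subseteq\cU(\tau(\omega),X^\nu_{t,x}(\tau)(\omega),Y^\nu_{t,x,1}(\tau)(\omega),0)$). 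Take $\varphi(t',x',y',m'):=\phi(t',x')$; then $V\le\varphi$ on $\cD$ because $V(t',x',1,0)=\bar V(t',x')\le\phi(t',x')$, and Theorem~\ref{th:DPP}(i) yields both the integrability of $\phi(\tau^\nu,X^\nu_{t,x}(\tau^\nu))^-$ and the inequality $F(t,x;\nu)\le E[\phi(\tau^\nu,X^\nu_{t,x}(\tau^\nu))]$; taking the supremum over $\nu$ finishes it.

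For part (ii), the idea is to run Theorem~\ref{th:DPP}(ii') for a \emph{relaxed} level and then send the relaxation to zero using the right-continuity hypothesis~\eqref{eq:rightContCondition}. Fix $\nu\in\bar\cU(t,x)$ and $\delta>0$. Again $M\equiv 0\in\cM^+_{t,1,(x,1)}(\nu)$. I would choose $\cD$ built from $\cO\times\{1\}\times\{0\}$, but there is a subtlety: we need $\cD\cap\bar{\bf D}$ to be $\sigma$-compact in the topology used in the proof of (ii'), and we must verify the semicontinuity conditions~\eqref{eq:semicontAss} for $\varphi(t',x',y',m'):=\phi(t',x')$, $F$, $G$ on $\cD$. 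Upper semicontinuity of $\phi$ gives the first; the assumed l.s.c. of $(s',x')\mapsto F(s',x';\nu)$ on $[0,t_0]\times\cO$ gives the second; the u.s.c. of $G$ — here $G(t',x',y';\bar\nu)=P\{Y^{\bar\nu}_{t',x',y'}(T)\le 0\}$ — is where~\eqref{eq:contPaths}–\eqref{eq:contInProbab} enter, giving that along $(t'_n,x'_n,y'_n)\to(t',x',y')$ the process $Y^{\bar\nu}$ converges uniformly in probability, hence $\limsup_n P\{Y^{\bar\nu}_{t'_n,x'_n,y'_n}(T)\le 0\}\le P\{Y^{\bar\nu}_{t',x',y'}(T)\le 0\}$ by a standard Portmanteau-type argument at the level $0$; one must be a little careful because $\{y\le 0\}$ is closed but one needs the upper bound, which is exactly u.s.c. of the indicator composed with convergence in probability — this is the one place where the continuity assumptions do real work. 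Restricting $\cD$ to a suitable $\sigma$-compact neighborhood (e.g. intersect with a compact exhaustion of $\cO$) and noting $S=\R^d$ is $\sigma$-compact handles the compactness requirement; the invariance $(\tau^\nu,X^\nu_{t,x}(\tau^\nu),1,0)\in\cD\cap\bar{\bf D}$ P-a.s. holds because $X^\nu_{t,x}$ stays in $\cO$ and $Y^\nu_{t,x,1}(\tau^\nu)\in(0,1]$ pins down $m=0$ with $\bar\cU(\tau^\nu,X^\nu_{t,x}(\tau^\nu))\ne\emptyset$ by~\eqref{eq:fullDomain}. Theorem~\ref{th:DPP}(ii') then gives $V(t,x,1,0+\delta)\ge E[\phi(\tau^\nu,X^\nu_{t,x}(\tau^\nu))]$, but $V(\cdot,1,\cdot)$ at $(x,1)$ with constraint level in the $y$-component equal to $0$, relaxed by $\delta$, is $V(t,x,1,\delta)$ in the notation of the augmented problem; letting $\delta\downarrow 0$ and invoking~\eqref{eq:rightContCondition}, which says $V(t,x,1,0)=V(t,x,1,0+)$, yields $\bar V(t,x)=V(t,x,1,0)\ge E[\phi(\tau^\nu,X^\nu_{t,x}(\tau^\nu))]$; supremize over $\nu$.

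The main obstacle I anticipate is the upper semicontinuity of the constraint map $G(t',x',y';\bar\nu)=P\{Y^{\bar\nu}_{t',x',y'}(T)\le 0\}$: one must argue that continuity in probability of the state process (hence of the running-minimum-of-distance functional, which is continuous on path space) upgrades to u.s.c. of the probability of the closed event $\{Y\le 0\}$ as the initial data vary, and simultaneously check that the $\sigma$-compactness / Lindel\"of machinery of Theorem~\ref{th:DPP}(ii') goes through with $\cD$ chosen inside the open set $\cO$ (bounded away from $\partial\cO$ on compacts). A secondary point requiring care is the bookkeeping that the ``constraint-level'' variable of the abstract Theorem~\ref{th:DPP} corresponds, in the augmented formulation, to the level $0$ attached to $g=\1_{(-\infty,0]}$, so that the relaxation $m+\delta$ there is precisely the $\delta$ whose vanishing is absorbed by~\eqref{eq:rightContCondition}; everything else is a routine transcription of the abstract result.
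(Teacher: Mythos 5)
Your proposal is correct and follows essentially the same route as the paper: reduce to Theorem~\ref{th:DPP} for the augmented process $\bar X$ with the constant martingale $M\equiv 0$ and $\cD=[t,T]\times\cO\times(0,\infty)\times\{0\}$, verify the u.s.c.\ of $G$ via continuity in probability and Portmanteau, apply (ii') to get $V(t,x,1,\delta)\ge E[\phi(\tau^\nu,X^\nu_{t,x}(\tau^\nu))]$, and absorb the relaxation $\delta$ using~\eqref{eq:rightContCondition}. One small remark: no ``restriction to a compact exhaustion of $\cO$'' is needed (and would in fact threaten the required invariance $(\tau^\nu,\bar X^\nu_{t,x,1}(\tau^\nu),0)\in\cD$), since $\cO\subseteq\R^d$ open is already $\sigma$-compact, as is $(0,\infty)$, so $\cD\cap\bD=\cD$ is itself $\sigma$-compact once~\eqref{eq:fullDomain} is used.
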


\begin{proof}
  (i) We may assume that $\bar \cU(t,x)\neq\emptyset$ as otherwise $\bar{V}(t,x)=-\infty$. As in the proof of~\eqref{eq:DPPi}, we obtain that $F(t,x;\nu) \leq E[\varphi(\tau,X_{t,x}^\nu(\tau))]$ for all $\nu\in\bar \cU(t,x)$. The claim follows by taking supremum over $\nu$.

  (ii) Again, we may assume that $\bar \cU(t,x)\neq \emptyset$ as otherwise the right hand side of~\eqref{eq:DPPSCii} equals $-\infty$. We set
  \[
    \cD:=[t,T]\times \cO\times (0,\infty)\times\{0\}.
  \]
  If $\nu\in\bar \cU(t,x)$, then $g(\bar{X}_{t,x,1}^\nu(T))=0$ and hence the constant martingale $M:=0$ is contained in $\cM^+_{t,0,(x,1)}(\nu)$. Moreover, $Y_{t,x,1}^\nu(s)>0$ and hence $(s,\bar{X}_{t,x,1}^\nu(s),M(s))\in \cD$ for all $s\in [t,T]$, $P$-a.s.
  Furthermore, if we define
  \[
    \varphi(t',x',y',m') :=
    \begin{cases}
      \phi(t',x'), & (t',x',y',m') \in \cD \\
      -\infty, & \mbox{otherwise,}
    \end{cases}
  \]
  then $\varphi$ is u.s.c.\ on $\cD$. To see that the third semicontinuity condition in~\eqref{eq:semicontAss} is also satisfied, note that for any
  $(t',x',y'),(t'',x'',y'')\in [0,t_0]\times S\times[0,\infty)$ and $\nu\in\cU_{t_0}$,
  \begin{align*}
    |Y^\nu_{t',x',y'}(T)-&Y^\nu_{t'',x'',y''}(T)|\\
     &\leq |y'-y''|+ \Big|\inf_{r\in[t',T]} d(X_{t',x'}^\nu(r)) - \!\!\inf_{r\in[t'',T]} d(X_{t'',x''}^\nu(r))\Big|\\
     &\leq |y'-y''|+ \sup_{r\in[0,T]} d\big(X_{t',x'}^\nu(r),X_{t'',x''}^\nu(r)\big).
  \end{align*}
  Hence~\eqref{eq:contInProbab} implies that $(t',x',y')\mapsto Y^\nu_{t',x',y'}(T)$ is continuous in probability.
  As $(-\infty,0]\subset\R$ is closed, we conclude by the Portmanteau theorem that
  \[
    (t',x',y')\mapsto P\big\{Y^\nu_{t',x',y'}(T)\in (\infty,0]\big\}\equiv G(t',x',y';\nu)\quad\mbox{is u.s.c.}
  \]
  as required.
 Since any open subset of a Euclidean space is $\sigma$-compact and since~\eqref{eq:fullDomain} implies $\cD\cap\bD=\cD$, we can use \eqref{eq:rightContCondition} and Theorem~\ref{th:DPP}(ii') with $M=0$ to obtain that
  \begin{align*}
    \bar V(t,x)
     &=V(t,x,1,0)\\
     &=V(t,x,1,0+)\\
     &\geq E\big[\varphi\big(\tau^\nu,X_{t,x}^\nu(\tau^\nu),Y_{t,x,1}^\nu(\tau^\nu),0\big)\big]\\
     &=E [\phi(\tau^\nu,X_{t,x}^\nu(\tau^\nu)) ].
  \end{align*}
  As $\nu\in\bar \cU(t,x)$ was arbitrary, the result follows.
\end{proof}

\begin{remark}{\rm
  Similarly as in Theorem~\ref{th:DPP}(ii), there is also a version of Theorem~\ref{thm: DPP state constraint}(ii) for stopping times taking countably many values. In this case, Assumption~\ref{ass:B} replaces Assumption~\ref{ass:B'}, all conditions on the time variable are superfluous, and one can consider a general separable metric space $S$.
  }
\end{remark}

\section{Application to Controlled Diffusions}\label{se:ApplControlledDiff}

In this section, we show how the weak formulation of the dynamic programming principle applies in the context of controlled Brownian stochastic differential equations and how it allows to derive the Hamilton-Jacobi-Bellman PDEs for the value functions associated to optimal control problems with expectation or state constraints.
As the main purpose of this section is to illustrate the use of Theorems~\ref{th:DPP} and~\ref{thm: DPP state constraint}, we shall choose a fairly simple setup allowing to explain the main points without too many distractions.
Given the generality of those theorems, extensions such as singular control, mixed control/stopping problems, etc.\ do not present any particular difficulty.

\subsection{Setup for Controlled Diffusions}\label{sec: dynamics}

From now on, we take $S=\R^d$ and let $\Omega=C([0,T];\R^d)$ be the space of continuous paths, $P$ the Wiener measure on $\Omega$, and $W$ the canonical process $W_{t}(\omega)=\omega_{t}$. Let $\F=(\cF_t)_{t\in[0,T]}$ be the augmentation of the filtration generated by $W$; without loss of generality, $\cF=\cF_T$. For $t\in[0,T]$, the auxiliary filtration $\F^t=(\cF^t_s)_{s\in[0,T]}$ is chosen to be the augmentation of $\sigma(W_r-W_t,\,t\leq r\leq s)$; in particular, $\F^t$ is independent of $\cF_t$.

Consider a closed set $U\subseteq \R^{d}$ and let $\cU$ be the set of all $U$-valued predictable processes $\nu$ satisfying
$E[\int_0^T|\nu_t|^2\,dt]<\infty$. Then we set
\[
  \cU_t=\big\{\nu\in\cU:\, \nu\mbox{ is $\F^t$-predictable}\big\}.
\]
This choice will be convenient to verify Assumption~B'. We remark that the restriction to $\F^t$-predictable controls entails no loss of generality, in the sense that the alternative choice $\cU_t=\cU$ would result in the same value function. Indeed, this follows from a well known randomization argument (see, e.g., \cite[Remark~5.2]{BT10}).

Let $\M^{d}$ denote the set of $d\x d$ matrices. Given two Lipschitz continuous functions
\[
  \mu:\R^{d}\x U\to \R^{d}, \quad\sigma:\R^{d}\x U\to \M^{d}
\]
and $(t,x,\nu)\in [0,T]\x\R^{d}\x\cU$, we define $X_{t,x}^{\nu}(\cdot)$ as the unique strong solution of the stochastic differential equation (SDE)
\begin{equation}\label{eq: def X eds}
X(s)=x+\int_{t}^{s} \mu(X(r),\nu_{r})\,dr + \int_{t}^{s} \sigma(X(r),\nu_{r})\,dW_{r},\quad t\le s \le T,
\end{equation}
where we set $X_{t,x}^{\nu}(r)=x$ for $r\leq t$.
As $X^{\nu}_{t,x}(T)$ is square integrable for any $\nu\in \cU$, \eqref{eq:controlsIntegrable} is satisfied whenever
\begin{equation}\label{eq: cond growth f and g}
\mbox{$f$ and $g$ have quadratic growth,}
\end{equation}
which we assume from now on. In addition, we also impose that
\begin{equation}\label{eq: semi cont f and g}
  \begin{cases}
    & \mbox{$f$ is l.s.c.\ and $f^-$ has subquadratic growth}, \\
    & \mbox{$g$ is u.s.c.\ and $g^+$ has subquadratic growth},
  \end{cases}
\end{equation}
where $h: \R^d\to\R$ is said to have subquadratic growth if $h(x)/|x|^2\to 0$ as $|x|\to\infty$.
This will be used to obtain the semicontinuity properties~\eqref{eq:semicontAss}.

Furthermore, we take  $\cM_{t,0}$ to be the family of all c\`adl\`ag martingales which start at $0$ and are adapted to $\F^t$. By the independence of the increments of $W$, we see that $M\in \cM_{t,0}$ is then also a martingale in the filtration $\F^t$ and that $M_r=0$ for $r\leq t$. For $\nu\in\cU_t$, we have $X^\nu_{t,x}(T)\in L^2(\cF^t_T,P)$ and hence~\eqref{eq:martRichness} is satisfied with $M^\nu_t[x](\cdot)=E[X^\nu_{t,x}(T)|\cF^t_\cdot]$.
It will be useful to express the martingales as stochastic integrals. Let
$\cA_{t}$ denote the set of $\R^{d}$-valued $\F^t$-predictable processes $\alpha$ such that $\int_0^T|\alpha_t|^2\,dt<\infty$ $P$-a.s.\ and such that
\[
  M^{\alpha}_{t,0}(\cdot):=\int_{t}^{\cdot }\alpha_{s}^{\top}\,dW_{s}
\]
is a martingale (${}^{\top}$ denotes transposition). Then the Brownian representation theorem yields that
\[
  \cM_{t,0}=\left\{  M^{\alpha}_{t,0}: \;\alpha \in \cA_{t}\right\}.
\]
In the following, we also use the notation $M_{t,m}^{\alpha}:=m+ M_{t,0}^{\alpha}$.

\begin{lemma}\label{lem: verification assumption section dynamics}
  In the above setup, Assumptions~A, $\bar{A}$, B, B' are satisfied and $F$ and $G$ satisfy~\eqref{eq:semicontAss}.
\end{lemma}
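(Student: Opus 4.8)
The plan is to verify each assumption in turn, exploiting the concrete structure of the Brownian SDE setup, and to postpone the semicontinuity of $F$ and $G$ (which rests on a stability estimate for the SDE) to the end, as that is where the only real analytic work lies.

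\textbf{Assumptions A and $\bar A$ (the "easy direction").} These are essentially the tower property plus a regular-conditional-probability argument. Fix $(t,x,m)$, $\nu\in\cU(t,x,m)$, $M=M^\alpha_{t,m}\in\cM^+_{t,m,x}(\nu)$ and $\tau\in\cT^t$. Because the increments $W_r-W_t$ ($r\ge t$) generate $\F^t$ and are independent of $\cF_t$, one has the standard flow property $X^\nu_{t,x}(s)=X^{\nu(\tau,\cdot)}_{\tau,X^\nu_{t,x}(\tau)}(s)$ on $[\tau,T]$ after a shift of the Brownian motion; concretely, for $P$-a.e.\ $\omega$ one defines $\nu_\omega$ to be the (shifted) control $r\mapsto \nu_r(\omega\otimes_\tau\cdot)$ and checks, using the uniqueness of strong solutions of \eqref{eq: def X eds} and the Lipschitz coefficients, that $X^{\nu_\omega}_{\tau(\omega),X^\nu_{t,x}(\tau)(\omega)}$ has the law of $X^\nu_{t,x}$ conditioned on $\cF_\tau$ at $\omega$. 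This gives equality, not merely inequality, in \eqref{eq:assA}, so \ref{ass:A} holds; and since $M(\tau)(\omega)=m+\int_t^{\tau(\omega)}\alpha^\top dW\ge G(\tau(\omega),X^\nu_{t,x}(\tau)(\omega);\nu_\omega)$ holds $P$-a.s.\ (the left side is the conditional expectation of $g(X^\nu_{t,x}(T))$ dominated by $M(T)$, the right side its realization), the selected control $\nu_\omega$ is admissible at level $M(\tau)(\omega)$, which is exactly what \ref{ass:A} demands. For $\bar A$ one specializes $g=\1_{(-\infty,0]}(y)$ and $m=0$: admissibility of $\nu$ for the closed-path constraint is inherited by the conditioned control for $P$-a.e.\ $\omega$ by the same disintegration.

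\textbf{Assumptions B and B$'$ (concatenation).} Here the point is that the explicit concatenation \eqref{eq:concatenationExample}, namely $\nu\otimes_{(\tau,\Gamma)}\bar\nu:=\nu\1_{[0,\tau]}+\1_{(\tau,T]}(\bar\nu\1_\Gamma+\nu\1_{\Omega\setminus\Gamma})$, lands in $\cU_t$: it is $U$-valued, $\F^t$-predictable (using $\Gamma\in\cF^t_\tau$ and, in the B$'$ case, $\bar\nu\in\cU_{\|\tau\|_{L^\infty}}$ together with \Bbiseqref{ass:B0'}, which holds because $s\le s'$ implies $\F^{s'}\subseteq\F^s$ hence $\cU_{s'}\subseteq\cU_s$), and square-integrable. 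Then \eqref{eq:causalityFIN}/\eqref{eq:causalitySwitch} follow from uniqueness of strong solutions to \eqref{eq: def X eds}, and \eqref{eq:consistencyFIN}/\eqref{eq:consistency} follow with equality from the flow property and the $\F^t$-measurability of everything after $\tau$ (so the conditional expectation given $\cF_\tau$ is a genuine function of $(\tau,X^\nu_{t,x}(\tau))$). For \Beqref{ass:B2}/\Bbiseqref{ass:B2'} one writes the required martingale as a stochastic integral: with $\bar M=M^{\bar\alpha}_{\cdot}$ coming from the Brownian representation of $g(X^{\bar\nu}_{\cdot,\cdot}(T))$, the process displayed in the assumption is $\int_t^\cdot \beta^\top dW$ with $\beta:=\alpha\1_{[t,\tau]}+\1_{(\tau,T]}(\bar\alpha\1_\Gamma+\alpha\1_{\Omega\setminus\Gamma})$, which is $\F^t$-predictable and square-integrable, hence the integral is in $\cM_{t,0}$. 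Finally \Beqref{ass:B3}/\Bbiseqref{ass:B3'} is immediate from Remark~\ref{rem: ass A implies B3}(1) once \ref{ass:A} is known. That $\bar A$ holds, and that B$'$ holds for the augmented process $\bar X=(X,Y)$, uses in addition that $Y^\nu_{t,x,y}$ from \eqref{eq:defY} is a deterministic functional of the path of $X$, so the flow and concatenation identities for $X$ transfer verbatim to $\bar X$.

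\textbf{The semicontinuity \eqref{eq:semicontAss} for $F$ and $G$ — the main obstacle.} This is the only place requiring estimates. From the Lipschitz assumption on $\mu,\sigma$ and Gronwall one gets, for fixed $\nu\in\cU_{t_0}$ and $(t',x'),(t'',x'')\in[0,t_0]\times\R^d$, an $L^2$-stability bound $E[\sup_{r\in[0,T]}|X^\nu_{t',x'}(r)-X^\nu_{t'',x''}(r)|^2]\le C(|x'-x''|^2+|t'-t''|)$ (with the convention $X^\nu_{t',x'}(r)=x'$ for $r<t'$), so $(t',x')\mapsto X^\nu_{t',x'}(T)$ is continuous in $L^2$, in particular in probability. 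For $G$: since $g$ is u.s.c.\ with $g^+$ of subquadratic growth, and $X^\nu_{t',x'}(T)\to X^\nu_{t'',x''}(T)$ in $L^2$ (hence, along subsequences, a.s.\ with a uniformly $L^2$-bounded dominating envelope giving uniform integrability of $g^+$ of the approximants), the portmanteau/Fatou argument yields $\limsup G(t',x';\nu)\le G(t'',x'';\nu)$, i.e.\ upper semicontinuity; the analogous statement with $f$ l.s.c.\ and $f^-$ subquadratic gives lower semicontinuity of $F$. The subquadratic-growth hypotheses in \eqref{eq: cond growth f and g}--\eqref{eq: semi cont f and g} are exactly what is needed to upgrade "a.s.\ convergence" to "convergence of the integrals in the right direction" via uniform integrability; I expect the bookkeeping of these integrability estimates, rather than any conceptual difficulty, to be the bulk of the proof. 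For the state-constraint application one additionally notes, as in the proof of Theorem~\ref{thm: DPP state constraint}(ii), that $(t',x',y')\mapsto Y^\nu_{t',x',y'}(T)$ is continuous in probability by the displayed Lipschitz-in-the-path bound together with \eqref{eq:contInProbab}, which here is just the $L^2$-stability estimate above.
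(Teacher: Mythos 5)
Your proof is essentially correct and follows the same route as the paper: Assumptions~A, $\bar{\text{A}}$, (B1') via the flow property and uniqueness of strong solutions together with the path-shift construction of $\nu_\omega$; (B0') from the nesting $\cF^{s'}_\cdot\subseteq\cF^s_\cdot$ for $s\le s'$; (B3') from Remark~\ref{rem: ass A implies B3}; and the semicontinuity of $F$ and $G$ by $L^2$-stability of $(t,x)\mapsto X^\nu_{t,x}(T)$, Fatou for $f^+$ (resp.\ $g^-$), and uniform integrability of $f^-$ (resp.\ $g^+$) on bounded sets thanks to the subquadratic-growth hypothesis in~\eqref{eq: semi cont f and g}. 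The one place where you diverge slightly is (B2'): you express the concatenated martingale as $\int_t^\cdot\beta^\top dW$ with $\beta=\alpha\1_{[t,\tau]}+\1_{(\tau,T]}(\bar\alpha\1_\Gamma+\alpha\1_{\Omega\setminus\Gamma})$, which is fine in spirit but glosses over two points: that $\bar\alpha$---the integrand in the representation of $M^{\bar\nu}_{\tau(\omega)}[X^\nu_{t,x}(\tau)(\omega)]$---is jointly measurable and $\F^t$-predictable in $(r,\omega)$, and that "square-integrable'' in the pointwise sense of $\cA_t$ does not by itself give a true (rather than local) martingale. The paper avoids both issues by defining $\bar M(r):=E[g(X^{\hat\nu}_{t,x}(T))|\cF_r]$ with $\hat\nu:=\nu\1_{[0,\tau]}+\1_{(\tau,T]}\bar\nu$ globally on $[0,T]$, which is manifestly a c\`adl\`ag $\F^t$-adapted true martingale, then identifies it with $M^{\bar\nu}_\tau[X^\nu_{t,x}(\tau)]$ on $[\tau,T]$ by the flow property and independence of $\bar\nu$ from $\cF_\tau$; this is cleaner and you may wish to adopt it, though your version can be completed by the same identification. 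The remark at the end about $\bar X=(X,Y)$ belongs to the verification needed for Theorem~\ref{thm: DPP state constraint}(ii) rather than to this lemma, but it is correct.
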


\begin{proof}
  Assumption~(B0') is immediate from the definition of $\cU_t$. We define the concatenation of controls by~\eqref{eq:concatenationExample}.

  The validity of Assumptions~A, ${\rm \bar{A}}$ and (B1') follows from the uniqueness and the flow property of~\eqref{eq: def X eds}; in particular, the control $\nu_\omega$ in Assumption~A can be defined by  $\nu_\omega(\omega'):=\nu(\omega\otimes_{\tau}\omega')$, $\omega'\in\Omega$, where the concatenated path $\omega\otimes_{\tau}\omega'$ is given by
  \[
    (\omega\otimes_{\tau}\omega')_r:=\omega_r\1_{[0,\tau(\omega)]}(r) + \big(\omega'_r-\omega'_\tau(\omega)+\omega_\tau(\omega)\big)\1_{(\tau(\omega),T]}(r).
  \]
  While we refer to \cite[Proposition 5.4]{BT10} for a detailed proof, we emphasize that the choice of $\cU_{s}$ is crucial for the validity of~\eqref{eq:consistency}: in the notation of Assumption~B',~\eqref{eq:consistency} essentially requires that $\bar\nu$ be independent of $\cF_\tau$.

  Let $t,\tau,\nu,\bar{\nu}$ be as in Assumption~B', we show that (B2') holds.
  Let $\bar{M}$ be a c\`adl\`ag version of
  \[
   \bar M(r):=E[g(X_{t,x}^{\hat \nu}(T))|\cF_r],\; r\in[0,T],\quad\mbox{where }\hat \nu:=\nu\1_{[0,\tau]}+\1_{(\tau,T]} \bar \nu.
  \]
  By the same argument as in~\cite[Proposition 5.4]{BT10}, we deduce from the uniqueness and the flow
  property of~\eqref{eq: def X eds} and the fact that $\bar \nu$ is independent of $\cF_\tau$ that
  \[
    E[g(X_{t,x}^{\hat  \nu}(T))|\cF_{r}]=E[g(X_{\tau,X_{t,x}^{\nu}(\tau)}^{\bar\nu}(T))|\cF_{r}]=M^{\bar \nu}_{\tau}[X_{t,x}^{\nu}(\tau)](r)\quad \mbox{on }[\tau,T].
  \]
  Hence $\bar M=M^{\bar \nu}_{\tau}[X_{t,x}^{\nu}(\tau)]$ on $[\tau,T]$. The last assertion of (B2') is clear by the definition of $\cM_{t,0}$. As already mentioned in Remark~\ref{rem: ass A implies B3}, Assumption (B3') follows from Assumption A and Assumption~B follows from Assumption~B'.

  Next, we check that $F$ satisfies~\eqref{eq:semicontAss}; i.e., that $F$ is l.s.c. For fixed $\nu\in\cU$,
  $(t,x)\mapsto X^\nu_{t,x}(T)$ is $L^2$-continuous.
  Hence the semicontinuity from~\eqref{eq: semi cont f and g} and Fatou's lemma yield that $(t,x)\mapsto E[f(X^\nu_{t,x}(T))^+]$ is l.s.c. By the subquadratic growth from~\eqref{eq: semi cont f and g}, we have that
  $\{f(X^\nu_{t,x}(T))^-:\, (t,x)\in B\}$ is uniformly integrable whenever $B\subset\bS$ is bounded, hence the semicontinuity of $f$ also yields that $(t,x)\mapsto E[f(X^\nu_{t,x}(T))^-]$ is u.s.c. As a result, $F$ is l.s.c. The same arguments show that $G$ also satisfies~\eqref{eq:semicontAss}.
\end{proof}

\subsection{PDE for Expectation Constraints}

In this section, we show how to deduce the Hamilton-Jacobi-Bellman PDE for the optimal control problem~\eqref{eq:valueFctDef} with expectation constraint from the weak dynamic programming principle stated in Theorem~\ref{th:DPP}. Given a suitably differentiable function $\vp(t,x)$ on $[0,T]\x \R^{d}$, we shall denote by $\partial_{t} \vp$ its derivative with respect to $t$ and by $D\vp$ and $D^{2}\vp$ the Jacobian and the Hessian matrix with respect to $x$, respectively.

In the context of the setup introduced in the preceding Section~\ref{sec: dynamics}, the Hamilton-Jacobi-Bellman operator is given by
\[
  H(x,p,Q):=\inf_{(u,a)\in U \x \R^{d}} \big(-L^{u,a}(x,p,Q)\big),\quad (x,p,Q)\in \R^{d}\x \R^{d+1}\x \M^{d+1},
\]
where
\[
  L^{u,a}(x,p,Q):= \mu_{X,M}(x,u)^{\top} p +\frac12 \tr[\sigma_{X,M}\sigma_{X,M}^{\top}(x,u,a)Q],\quad (u,a)\in U\x \R^{d}
\]
is the Dynkin operator with coefficients
\[
  \mu_{X,M}(x,u):=\left(\begin{array}{c}\mu(x,u)\\ 0\end{array}\right)
  \quad\mbox{and}\quad
  \sigma_{X,M}(x,u,a):=\left(\begin{array}{c}\sigma(x,u)\\ a^{\top}\end{array}\right).
\]
Since the set $U\x \R^{d}$ is unbounded, $H$ may be discontinuous and
viscosity solution properties need to be stated in terms of the upper and lower semicontinuous envelopes of $H$,
\begin{align*}
  H^{*}(x,p,Q)&:=\limsup\limits_{(x',p',Q')\to (x,p,Q)} H(x',p',Q'),\\
  H_{*}(x,p,Q)&:=\liminf \limits_{(x',p',Q')\to (x,p,Q)} H(x',p',Q').
\end{align*}
The value function $V$ defined in \eqref{eq:valueFctDef} may also be discontinuous and so we introduce
\begin{align*}
  V^{*}(t,x,m):=\limsup \limits_{\tiny \begin{array}{c}(t',x',m')\to (t,x,m)\\ (t',x',m')\in \Int\bD\end{array}} V(t',x',m'),\\
  V_{*}(t,x,m):=\liminf \limits_{\tiny \begin{array}{c}(t',x',m')\to (t,x,m)\\ (t',x',m')\in \Int\bD\end{array}} V(t',x',m').
\end{align*}
Here $\Int\bD$ denotes the parabolic interior; i.e, the interior of $\bD\setminus\{t=T\}$ in $\hbS$, where
$\{t=T\}:=\{(t,x,m)\in\hbS:\,t=T\}$. Moreover, we shall denote by $\overline{\bD}$ the closure of $\bD$.
The main result of this subsection is the following PDE.

\begin{theorem}\label{thm: PDE characterization V}
  Assume that $V$ is locally bounded on $\Int\bD$.
  \begin{enumerate}[topsep=3pt, partopsep=0pt, itemsep=1pt,parsep=2pt]
   \item The function $V^{*}$ is a viscosity subsolution on $\overline{\bD}\setminus\{t=T\}$ of
   \[
    -\partial_{t} \vp+H_{*}(\cdot,D\vp,D^{2}\vp)\le 0.
   \]
   \item The function $V_{*}$ is a viscosity supersolution on $\Int\bD$ of
   \[
    -\partial_{t} \vp+H^{*}(\cdot,D\vp,D^{2}\vp)\ge 0.
   \]
  \end{enumerate}
\end{theorem}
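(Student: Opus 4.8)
The plan is to prove the two viscosity properties separately, in each case using the weak dynamic programming principle from Theorem~\ref{th:DPP} applied to the controlled diffusion setup, where Lemma~\ref{lem: verification assumption section dynamics} guarantees that all the structural assumptions (A, B, B', and the semicontinuity~\eqref{eq:semicontAss}) hold. Throughout, the augmented state is $(X,M)$ with generator $L^{u,a}$, so the "state variable'' in Theorem~\ref{th:DPP} is the pair $(x,m)$ and the coefficients $\mu_{X,M},\sigma_{X,M}$ incorporate the martingale component driven by $\alpha\in\cA_t$.

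\textbf{Supersolution property (ii).} Fix $(t_0,x_0,m_0)\in\Int\bD$ and a smooth test function $\vp$ such that $V_*-\vp$ attains a local minimum $0$ at $(t_0,x_0,m_0)$; by the usual perturbation argument I may assume the minimum is strict and that $\vp\le V_*\le V$ on a compact neighbourhood $\cD\subseteq\Int\bD$. Pick a sequence $(t_n,x_n,m_n)\in\Int\bD$ converging to $(t_0,x_0,m_0)$ with $V(t_n,x_n,m_n)\to V_*(t_0,x_0,m_0)$. Suppose, for contradiction, that the supersolution inequality fails: then there exist $(u,a)\in U\times\R^d$ with $L^{u,a}(x_0,D\vp,D^2\vp)(t_0,\cdot)>\partial_t\vp(t_0,x_0,m_0)$, hence by continuity of $L^{u,a}$ and smoothness of $\vp$, the constant control $\nu\equiv u$ together with $\alpha\equiv a$ makes $s\mapsto\vp(s,X^\nu_{t_n,x_n}(s),M^{\alpha}_{t_n,m_n}(s))$ a strict submartingale up to the exit time $\tau_n$ from a small parabolic ball; I apply Corollary~\ref{co:DPPrelaxed} (the relaxed version, since $\nu\equiv u$ need not be admissible at $(t_n,x_n,m_n)$, but the domain-invariance~\eqref{eq:domainInvarianceRelax} holds for short times because $\cD\subseteq\bD$) with $\tau=\tau_n$ and test function $\vp$, getting $V(t_n,x_n,m_n+\delta)\ge E[\vp(\tau_n,X^\nu(\tau_n),M^\alpha(\tau_n))]\ge\vp(t_n,x_n,m_n)+c$ for some $c>0$ independent of $n$ and $\delta$; after letting $\delta\downarrow0$ and $n\to\infty$ and using $\vp(t_n,x_n,m_n)\to V_*(t_0,x_0,m_0)=\vp(t_0,x_0,m_0)$ together with $V(t_n,x_n,m_n+\delta)\to V_*$ (using local boundedness and $m$-monotonicity to control the $\delta$-relaxation), this contradicts $V_*=\vp$ at the minimum point. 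One must be careful that $H^*$, not $H$, appears: since we only need \emph{one} pair $(u,a)$ realizing a strict inequality and $H^*$ is the u.s.c.\ envelope, negating $-\partial_t\vp+H^*\ge0$ gives exactly such a pair.

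\textbf{Subsolution property (i).} Fix $(t_0,x_0,m_0)\in\overline{\bD}\setminus\{t=T\}$ and a smooth $\vp$ with $V^*-\vp$ having a strict local maximum $0$ there; take $(t_n,x_n,m_n)\in\Int\bD$ with $V(t_n,x_n,m_n)\to V^*(t_0,x_0,m_0)$ and $(t_n,x_n,m_n)\to(t_0,x_0,m_0)$. Suppose the subsolution inequality fails: $-\partial_t\vp+H_*>0$ at $(t_0,x_0,m_0)$, i.e.\ $L^{u,a}(x_0,D\vp,D^2\vp)<\partial_t\vp$ for \emph{all} $(u,a)$ near by (by definition of the l.s.c.\ envelope $H_*$ and continuity of $\vp$), uniformly on a small parabolic ball; then for \emph{any} admissible $\nu\in\cU(t_n,x_n,m_n)$ and \emph{any} $M\in\cM^+_{t_n,m_n,(x_n)}(\nu)$ the process $s\mapsto\vp(s,X^\nu(s),M(s))$ is a strict supermartingale up to the exit time $\tau_n$ from that ball, since its drift is $L^{\nu_s,\alpha_s}(X,D\vp,D^2\vp)-\partial_t\vp<-c<0$ regardless of the controls. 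Applying Theorem~\ref{th:DPP}(i) (Assumption~A holds) with this $\tau_n$ and the test function $\vp$ yields $F(t_n,x_n;\nu)\le E[\vp(\tau_n,X^\nu(\tau_n),M(\tau_n))]\le\vp(t_n,x_n,m_n)-c$; taking the supremum over admissible $\nu$ gives $V(t_n,x_n,m_n)\le\vp(t_n,x_n,m_n)-c$, and passing to the limit contradicts $V^*=\vp$ at the maximum. Here the relaxation $\delta$ does not intervene because part~(i) of the DPP has no relaxation.

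\textbf{Main obstacle.} The delicate points are, first, the handling of the $\delta$-relaxation in the supersolution proof: one needs $\limsup_{\delta\downarrow0}V(t_n,x_n,m_n+\delta)$ to be controlled, which follows from local boundedness of $V$ on $\Int\bD$ and monotonicity in $m$, but the interchange of the limits $\delta\downarrow0$ and $n\to\infty$ must be organized carefully (fix $n$, let $\delta\downarrow0$ using that $V(t_n,x_n,\cdot)$ is finite and monotone near $m_n$, then $n\to\infty$). Second, in the subsolution proof the subtlety is that $\nu$ ranges over an \emph{unbounded} control set, so one must ensure the strict supermartingale estimate $L^{u,a}-\partial_t\vp<-c$ holds uniformly over all $(u,a)\in U\times\R^d$ on the small ball --- this is precisely what $-\partial_t\vp+H_*>0$ encodes, but verifying it requires expanding the definition of $H_*$ as an l.s.c.\ envelope of an infimum and using the quadratic structure in $(u,a)$ together with compactness of the spatial ball; one also needs the stopped process to be a genuine (not merely local) supermartingale, which follows from the square-integrability built into $\cU_t$ and $\cA_t$ and the boundedness of $\vp$ and its derivatives on the ball. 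Finally, the boundary case: for part~(i) the claim is on $\overline{\bD}\setminus\{t=T\}$, so when $(t_0,x_0,m_0)\in\partial\bD$ one still approximates by interior points $(t_n,x_n,m_n)\in\Int\bD$ and all the DPP applications take place at those interior points, so no additional boundary argument is needed --- this is the reason the subsolution property extends to the closure while the supersolution property does not.
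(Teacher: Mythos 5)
Your overall strategy coincides with the paper's: derive both viscosity inequalities from the weak DPP via an Itô/exit-time argument, with Lemma~\ref{lem: verification assumption section dynamics} supplying the structural assumptions. The subsolution half is essentially the paper's proof (with the strictness obtained by a quartic perturbation, as you allude to). However, the supersolution half has a genuine gap in the way you treat the $\delta$-relaxation.

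You pick $(t_n,x_n,m_n)\in\Int\bD$ with $V(t_n,x_n,m_n)\to V_*(t_0,x_0,m_0)$, apply Corollary~\ref{co:DPPrelaxed} to get $V(t_n,x_n,m_n+\delta)\ge \vp(t_n,x_n,m_n)+c$, and then claim one can ``let $\delta\downarrow 0$ \ldots\ using local boundedness and $m$-monotonicity to control the $\delta$-relaxation.'' This does not close. Monotonicity gives $V(t_n,x_n,m_n+\delta)\ge V(t_n,x_n,m_n)$, i.e.\ the right limit $V(t_n,x_n,m_n+)$ may be \emph{strictly larger} than $V(t_n,x_n,m_n)$, and local boundedness supplies only a crude a priori upper bound, not the right-continuity in $m$ you would need. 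Worse, there is simply no contradiction available along your sequence: for any $\delta_n\downarrow 0$ one has $(t_n,x_n,m_n+\delta_n)\to(t_0,x_0,m_0)$ in $\Int\bD$, hence by definition of $V_*$ as a $\liminf$ one gets $\liminf_n V(t_n,x_n,m_n+\delta_n)\ge V_*(t_0,x_0,m_0)$, which is perfectly compatible with the DPP bound $V(t_n,x_n,m_n+\delta_n)\ge \vp(t_n,x_n,m_n)+c$. The inequalities all point the same way.

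The paper's fix is to choose the approximating sequence \emph{relative to the relaxed level}: since $V_*$ is a $\liminf$, one can pick $(t_\eps,x_\eps,m_\eps)$ near $(t_0,x_0,m_0)$ with $V(t_\eps,x_\eps,m_\eps+\eps)\le V_*(t_0,x_0,m_0)+\eps$ (take a minimizing sequence $(t_\eps',x_\eps',m_\eps')$ and set $m_\eps:=m_\eps'-\eps$; for $\eps$ small this stays in $\Int\bD$). Applying Lemma~\ref{lem: DPP constraint in expectation}(ii) at $(t_\eps,x_\eps,m_\eps)$ with relaxation $\eps$ then produces exactly the quantity $V(t_\eps,x_\eps,m_\eps+\eps)$ on the left-hand side, which is bounded above by $V_*(t_0,x_0,m_0)+\eps$, while the Itô estimate with the strict (quartic-perturbed) minimum bounds the right-hand side strictly below by $V_*(t_0,x_0,m_0)+\eta-o(1)$; taking $\eps$ small yields the contradiction. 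Without this coupling of the perturbation of $m$ to the relaxation parameter, the argument does not terminate.
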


We refer to~\cite{CrIsLi92} for the various equivalent definitions of viscosity super- and subsolutions. We merely mention that ``subsolution on $A$'' means that the subsolution property is satisfied at points of $A$ which are local maxima of $V^{*}-\vp$ on $A$, where $\vp$ is a test function, and analogously for the supersolution.

We shall not discuss in this generality the boundary condition and the validity of a comparison principle. In the subsequent section, these will be studied in some detail for the case of state constraints. We also refer to \cite{BEI10} for the study of the boundary conditions in a similar framework.

\begin{remark}\label{rem: function v}{\rm
  We observe that the domain of the PDE in Theorem~\ref{thm: PDE characterization V} is not given a priori;
  it is itself characterized by a control problem: if we define
  \begin{equation}\label{eq:defv}
   v(t,x):=\inf_{{\nu \in \cU_{t}}} E[g(X^{\nu}_{t,x}(T))],\quad (t,x)\in\bS,
  \end{equation}
  then
  \[%
   \Int\bD=\big\{(t,x,m)\in \hbS:\,m>v^{*}(t,x),\, t<T\big\},
  \]
  where $v^{*}$ is the upper semicontinuous envelope of $v$ on $[0,T)\x \R^{d}$. In particular, $\Int\bD\neq\emptyset$ since $v$ is locally bounded from above.
  In fact, in the present setup, we also have
  \begin{equation}\label{eq:ProofDomainInClosure}
   \Int\bD=\big\{(t,x,m)\in \hbS:\,m>v(t,x),\, t<T\big\}.
  \end{equation}
  Indeed, a well known randomization argument (e.g., \cite[Remark~5.2]{BT10}) yields that
  $v(t,x)=\inf_{{\nu \in \cU}} E[g(X^{\nu}_{t,x}(T))]$ for all $(t,x)\in\bS$; i.e., the set $\cU_t$ in~\eqref{eq:defv} can be replaced by $\cU$. Therefore, $v$ inherits the upper semicontinuity of $G$
  (c.f.\ Lemma~\ref{lem: verification assumption section dynamics}) and we have $v=v^*$.
  Using~\eqref{eq:ProofDomainInClosure}, we obtain that
  \[
   \big\{(t,x,m)\in \hbS:\,m\ge v(t,x)\big\}
   \subseteq \big\{(t,x,m)\in \hbS:\,m\ge v_*(t,x)\big\} = \overline{\Int\bD},
  \]
  where $v_{*}$ is the lower semicontinuous envelope of $v$ on $[0,T]\x \R^{d}$, and hence
  \begin{equation}\label{eq:DomainInClosure}
    \bD\subseteq \overline{\Int\bD}.
  \end{equation}
  If furthermore $v$ is continuous and the infimum in~\eqref{eq:defv} is attained for all $(t,x)$, then
  the converse inclusion is also satisfied. In applications, it may be desirable to have continuity of $v$ so that
  $\Int \bD$ and $\overline{\bD}$ are described directly by $v$ (rather than a semicontinuous envelope). To analyze the continuity of $v$, one can study the comparison principle for the Hamilton-Jacobi-Bellman equation associated with the control problem~\eqref{eq:defv}. We refer to \cite[Sections~5~and~6]{follmer2000efficient} for the explicit computation of $v$  in an example from Mathematical Finance.
  }
\end{remark}

The rest of this subsection is devoted to the proof of Theorem~\ref{thm: PDE characterization V}. We first state a version of Theorem~\ref{th:DPP} which is suitable to derive the PDE.

\begin{lemma}\label{lem: DPP constraint in expectation}
  (i) Let $B$ be an open neighborhood of a point $(t,x,m)\in\bD$ such that $V(t,x,m)<\infty$ and let $\vp: \overline{B} \to \R$ be a continuous function such that $V\le \vp$ on $\overline{B}$. For all $\eps>0$ there exist $(\nu,\alpha)\in \cU_{t}\x \cA_{t}$ such that
  \[
    V(t,x,m)\le E \big[ \varphi(\tau,X_{t,x}^\nu(\tau),M_{t,m}^{\alpha}(\tau)) \big]+\eps\quad\mbox{and}\quad M_{t,m}^{\alpha}(T)\ge g(X_{t,x}^\nu(T)),
  \]
  where $\tau$ is the first exit time of  $(s,X_{t,x}^{\nu}(s),M_{t,m}^{\alpha}(s))_{s\ge t}$ from $B$.

  (ii) Let $B$ be an open neighborhood of a point $(t,x,m)\in\Int\bD$ such that $\overline{B}\subseteq \bD$ and let
  $(\nu,\alpha)\in\cU_t\times\cA_t$. For any continuous function $\varphi: \overline{B} \to \R$ satisfying $V\ge  \varphi$ on $\overline{B}$ and for any $\eps>0$,
  \begin{equation}\label{eq: DPP constraint in expectation ii}
    V(t,x,m+\eps) \geq E \big[ \varphi(\tau,X_{t,x}^\nu(\tau),M_{t,m}^{\alpha}(\tau)) \big],
  \end{equation}
  where $\tau$ is the first exit time of $(s,X_{t,x}^{\nu}(s),M_{t,m}^{\alpha}(s))_{s\ge t}$ from $B$.
\end{lemma}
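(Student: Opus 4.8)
The plan is to obtain both parts by specializing Theorem~\ref{th:DPP} and Corollary~\ref{co:DPPrelaxed} to the diffusion setup, with $\cD:=\overline{B}$ and $\tau$ the indicated exit time. All the structural hypotheses---Assumptions~A, B, B' and the semicontinuity~\eqref{eq:semicontAss} of $F,G$---have been verified in Lemma~\ref{lem: verification assumption section dynamics}, and the Brownian representation $\cM_{t,0}=\{M^{\alpha}_{t,0}:\alpha\in\cA_t\}$ identifies the auxiliary martingales with those arising from $\cA_t$. The only setup-specific observation needed is that the process $(s,X_{t,x}^{\nu}(s),M_{t,m}^{\alpha}(s))_{s\ge t}$ has continuous paths---$M_{t,m}^{\alpha}=m+\int_t^{\cdot}\alpha_s^{\top}\,dW_s$ is continuous---and is $\F^t$-adapted, so that its first exit time $\tau$ from the open set $B$ lies in $\cT^t$ and the stopped triple $(\tau,X_{t,x}^{\nu}(\tau),M_{t,m}^{\alpha}(\tau))$ takes values in $\overline{B}$ $P$-a.s.

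For (i): since $(t,x,m)\in\bD$ and $V(t,x,m)<\infty$, we have $V(t,x,m)\in\R$ by~\eqref{eq:controlsIntegrable}, so given $\eps>0$ there is $\nu\in\cU(t,x,m)$ with $F(t,x;\nu)\ge V(t,x,m)-\eps$. Writing $m':=E[g(X_{t,x}^{\nu}(T))]\le m$, the richness condition~\eqref{eq:martRichness} together with the Brownian representation gives $M_{t}^{\nu}[x]=M^{\alpha}_{t,m'}$ for some $\alpha\in\cA_t$; then $M:=M^{\alpha}_{t,m}=M_{t}^{\nu}[x]+(m-m')$ satisfies $M(T)=g(X_{t,x}^{\nu}(T))+(m-m')\ge g(X_{t,x}^{\nu}(T))$, i.e.\ $M\in\cM^+_{t,m,x}(\nu)$. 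Extend $\varphi$ to $\hbS$ by setting $\varphi:=+\infty$ off $\overline{B}$ (this keeps $\varphi$ measurable and preserves $V\le\varphi$ on $\cD=\overline{B}$), and recall $(\tau,X_{t,x}^{\nu}(\tau),M(\tau))\in\overline{B}$ a.s. By Assumption~\ref{ass:A} and Theorem~\ref{th:DPP}(i), $F(t,x;\nu)\le E[\varphi(\tau,X_{t,x}^{\nu}(\tau),M^{\alpha}_{t,m}(\tau))]$; combined with $F(t,x;\nu)\ge V(t,x,m)-\eps$ this is the first assertion, and $M^{\alpha}_{t,m}(T)\ge g(X_{t,x}^{\nu}(T))$ was just shown.

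For (ii): here $\nu$ is an arbitrary element of $\cU_t$ and $M^{\alpha}_{t,m}$ an arbitrary element of $\cM_{t,m}$, so neither need be admissible at $(t,x,m)$; hence one must invoke Corollary~\ref{co:DPPrelaxed} in the form extending Theorem~\ref{th:DPP}(ii') (the exit time $\tau$ need not take countably many values). With $\cD=\overline{B}$, the hypothesis $\overline{B}\subseteq\bD$ gives $\cD\cap\bD=\cD$, which is $\sigma$-compact (a closed subset of $[0,T]\times\R^d\times\R$), and it yields the domain-invariance requirement~\eqref{eq:domainInvarianceRelax} because the stopped triple lies in $\overline{B}\subseteq\bD$ a.s. Extending $\varphi$ to $\hbS$ by $-\infty$ off the closed set $\overline{B}$ produces an u.s.c.\ function with $V\ge\varphi$ on $\cD$, and the semicontinuity of $F$ (l.s.c.) and $G$ (u.s.c.) from Lemma~\ref{lem: verification assumption section dynamics} supplies the remaining conditions in~\eqref{eq:semicontAss}. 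Corollary~\ref{co:DPPrelaxed} then yields $V(t,x,m+\delta)\ge E[\varphi(\tau,X_{t,x}^{\nu}(\tau),M^{\alpha}_{t,m}(\tau))]$ for every $\delta>0$, which is~\eqref{eq: DPP constraint in expectation ii} with $\delta=\eps$.

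I do not anticipate a substantive obstacle: the lemma is essentially a repackaging of Theorem~\ref{th:DPP} and Corollary~\ref{co:DPPrelaxed} adapted to first-exit times. The only points requiring attention are the continuity and adaptedness of the triple (to legitimize $\tau\in\cT^t$ and to place the stopped triple in $\overline{B}$), the extension of the test function from $\overline{B}$ to $\hbS$ without losing the relevant semicontinuity, and---in (ii)---the recognition that, since the given control need not be admissible at $(t,x,m)$, one applies the relaxed Corollary~\ref{co:DPPrelaxed}, whose domain-invariance hypothesis~\eqref{eq:domainInvarianceRelax} is exactly guaranteed by the assumption $\overline{B}\subseteq\bD$.
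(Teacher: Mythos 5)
Your proposal is correct and follows essentially the same route as the paper's proof: part~(i) is obtained from Theorem~\ref{th:DPP}(i) together with Lemmata~\ref{le:reformulationOfConstraint} and~\ref{lem: verification assumption section dynamics}, and part~(ii) from Corollary~\ref{co:DPPrelaxed} (extending Theorem~\ref{th:DPP}(ii')) with $\cD:=\overline{B}$, using $\overline{B}\subseteq\bD$ for both the $\sigma$-compactness and the domain-invariance requirement. The only differences are that you spell out the construction of $M\in\cM^+_{t,m,x}(\nu)$ via the Brownian representation and the extension of $\varphi$ off $\overline{B}$ by $\pm\infty$, which the paper leaves implicit.
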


\begin{proof}
  In view of Lemmata~\ref{le:reformulationOfConstraint} and~\ref{lem: verification assumption section dynamics}, part~(i) is immediate from Theorem~\ref{th:DPP}(i).

  For part~(ii) we use the extension of Theorem~\ref{th:DPP}(ii') as stated in Corollary~\ref{co:DPPrelaxed} with $\cD:=\overline B$. Note that $(\tau,X_{t,x}^\nu(\tau),M_{t,x}^\alpha(\tau))\in\overline B\subseteq \bD$; in particular,
  $\cD\cap\bD=\cD$ is closed and hence $\sigma$-compact.
\end{proof}

We can now deduce the PDE for $V$ from the dynamic programming principle in the form of Lemma~\ref{lem: DPP constraint in expectation}. Although the arguments are the usual ones, we shall indicate the proof, in particular to show that the relaxation ``$m+\eps$'' in~\eqref{eq: DPP constraint in expectation ii} does not affect the PDE.

\begin{proof}[Proof of Theorem~\ref{thm: PDE characterization V}]
  (i)~We first prove the subsolution property. Let $\vp$ be a $C^{1,2}$-function and let $(t_{0},x_{0},m_{0})\in \overline \bD$ be such that $t_0\in(0,T)$ and $(t_{0},x_{0},m_{0})$ is a  maximum point of $V^{*}-\vp$ satisfying
  \begin{equation}\label{eq : max V-vp = 0}
    (V^{*}-\vp)(t_{0},x_{0},m_{0})=0.
  \end{equation}
  Assume for contradiction that
  \[
    \big(-\partial_{t} \vp+H_{*}(\cdot,D\vp,D^{2}\vp)\big)(t_{0},x_{0},m_{0})>0.
  \]
   Since  $ \overline \bD=\overline{\Int\bD}$ by~\eqref{eq:DomainInClosure}, there exists a bounded open neighborhood $B\subset \hbS$ of $(t_{0},x_{0},m_{0})$  such that
  \begin{equation}\label{eq: contra sub sol pde}
   -\partial_{t} \bar \vp-L^{  u,  a}(\cdot,D\bar \vp,D^{2}\bar \vp) >0 \quad\mbox{on } \overline{B}\cap \overline{\Int\bD},  \quad\mbox{for all } (u,a)\in U\x \R^{d},
  \end{equation}
  where
  \[
    \bar \vp(s,y,n):=\vp(t_{0},x_{0},m_{0})+\left(|s-t_{0}|^{2}+|y-x_{0}|^{4}+|n-m_{0}|^{4}\right).
  \]
  Moreover, we have
  \begin{equation}\label{eq: max strict for sub sol}
    \eta:=\min_{\partial B} (\bar\vp-\vp)>0.
  \end{equation}
  Given $\eps>0$, let $(t_{\eps},x_{\eps},m_{\eps})\in  B\cap \Int\bD$ be such that
  \begin{equation}\label{eq : def point eps for sub sol}
  V(t_{\eps},x_{\eps},m_{\eps})\ge V^{*}(t_{0},x_{0},m_{0})-\eps.
  \end{equation}
  Consider arbitrary $(\nu,\alpha)\in \cU_{t}\x \cA_{t}$ such that $M_{t_{\eps},m_{\eps}}^{\alpha}(T)\ge g(X_{t_{\eps},x_{\eps}}^{\nu}(T))$ and let $\tau$ be the first exit time of $(s,X_{t_{\eps},x_{\eps}}^{\nu}(s),M_{t_{\eps},m_{\eps}}^{\alpha}(s))_{s\ge t}$ from $B$.
  We recall from Remark~\ref{rem: ass A implies B3}(ii) that $(s,X_{t_{\eps},x_{\eps}}^{\nu}(s),M_{t_{\eps},m_{\eps}}^{\alpha}(s))_{s\ge t}$ remains in $\bD$ on $[t,T]$, and hence also in $\overline{\Int \bD}$ by~\eqref{eq:DomainInClosure}. Now, it follows from It\^{o}'s formula and~\eqref{eq: contra sub sol pde} that
  \begin{equation*}%
    \bar \vp(t_{\eps},x_{\eps},m_{\eps})\geq E\big[\bar \vp(\tau,X_{t_{\eps},x_{\eps}}^{\nu}(\tau),M_{t_{\eps},m_{\eps}}^{\alpha}(\tau))\big].
  \end{equation*}
  For $(t_{\eps},x_{\eps},m_{\eps})$ close enough to $(t_{0},x_{0},m_{0})$, this implies that
  \[
    \bar \vp(t_{0},x_{0},m_{0})\ge E\big[\bar \vp(\tau,X_{t_{\eps},x_{\eps}}^{\nu}(\tau),M_{t_{\eps},m_{\eps}}^{\alpha}(\tau))\big]-o(1) \quad\mbox{as }\eps\to0,
  \]
  which, by \eqref{eq : max V-vp = 0}, \eqref{eq: max strict for sub sol} and  \eqref{eq : def point eps for sub sol},    leads to
  \[
    V(t_{\eps},x_{\eps},m_{\eps})  \ge E\big[\vp(\tau,X_{t_{\eps},x_{\eps}}^{\nu}(\tau),M_{t_{\eps},m_{\eps}}^{\alpha}(\tau))\big]+\eta-o(1).
  \]
  This contradicts Lemma~\ref{lem: DPP constraint in expectation}(i) for $\eps>0$ small enough.

  (ii)~We now prove the supersolution property.  Let $\vp$ be a $C^{1,2}$-function and let $(t_{0},x_{0},m_{0})\in \Int\bD$ be such that $(t_{0},x_{0},m_{0})$ is a  minimum point of $V_{*}-\vp$ satisfying
  \begin{equation}\label{eq : min V-vp = 0}
    (V_{*}-\vp)(t_{0},x_{0},m_{0})=0.
  \end{equation}
  Assume for contradiction that
  \[
    \big(-\partial_{t} \vp+H^{*}(\cdot,D\vp,D^{2}\vp)\big)(t_{0},x_{0},m_{0})<0.
  \]
  Then there exist $(\hat u,\hat a)\in U\x \R^{d}$ and a bounded open neighborhood $B$ of $(t_{0},x_{0},m_{0})$ such that $\overline{B}\subseteq \Int\bD$ and
  \begin{equation}\label{eq: contra sur sol pde}
   -\partial_{t} \bar \vp-L^{\hat u,\hat a}(\cdot,D\bar \vp,D^{2}\bar \vp) <0 \quad\mbox{on } B,
  \end{equation}
  where
  \[
    \bar \vp(s,y,n):=\vp(t_{0},x_{0},m_{0})-\big(|s-t_{0}|^{2}+|y-x_{0}|^{4}+|n-m_{0}|^{4}\big).
  \]
  Note that
  \begin{equation}\label{eq: min strict for sur sol}
    \eta:=\min_{\partial B} (\vp-\bar \vp)>0.
  \end{equation}
  Given $\eps>0$, let $(t_{\eps},x_{\eps},m_{\eps})\in  B$ be such that
  \begin{equation}\label{eq : def point eps for super sol}
    V(t_{\eps},x_{\eps},m_{\eps}+\eps)\le V_{*}(t_{0},x_{0},m_{0})+\eps.
  \end{equation}
  Viewing $(\hat u,\hat a)$ as a constant control, it follows from It\^{o}'s formula and \eqref{eq: contra sur sol pde} that
  \[
    \bar \vp(t_{\eps},x_{\eps},m_{\eps})\le E\big[\bar \vp(\tau,X_{t_{\eps},x_{\eps}}^{\hat u}(\tau),M_{t_{\eps},m_{\eps}}^{\hat a}(\tau))\big],
  \]
  where  $\tau$ is the first exit time of $(s,X_{t_{\eps},x_{\eps}}^{\hat u}(s),M_{t_{\eps},m_{\eps}}^{\hat a}(s))_{s\geq t}$ from $B$. For $(t_{\eps},x_{\eps},m_{\eps})$ close enough to $(t_{0},x_{0},m_{0})$, this implies that
  \[
    \bar \vp(t_{0},x_{0},m_{0})\le E\big[\bar \vp(\tau,X_{t_{\eps},x_{\eps}}^{\hat u}(\tau),M_{t_{\eps},m_{\eps}}^{\hat a}(\tau))\big]+o(1)\quad\mbox{as }\eps\to0,
  \]
  which, by \eqref{eq : min V-vp = 0}, \eqref{eq: min strict for sur sol} and  \eqref{eq : def point eps for super sol},    leads to
  \[
    V(t_{\eps},x_{\eps},m_{\eps}+\eps)  \le E\big[\vp(\tau,X_{t_{\eps},x_{\eps}}^{\hat u}(\tau),M_{t_{\eps},m_{\eps}}^{\hat a}(\tau))\big]-\eta+o(1).
  \]
  For $\eps>0$ small enough, this yields a contradiction to Lemma~\ref{lem: DPP constraint in expectation}(ii).
\end{proof}

\subsection{PDE for State Constraints}\label{subsec: pde for state constraint}

In this section, we discuss the Hamilton-Jacobi-Bellman PDE for the state constraint problem (cf.\ Section~\ref{sec: DPP state constraint}) in the case where the state process is given by a controlled SDE as introduced in Section \ref{sec: dynamics} and required to stay in an open set $\cO\subseteq\R^d$. Note that in this setup, the continuity conditions~\eqref{eq:contPaths} and~\eqref{eq:contInProbab} are satisfied.

We shall derive the PDE via Theorem~\ref{thm: DPP state constraint}. The basic idea to guarantee its condition~\eqref{eq:rightContCondition} about right continuity in the constraint level runs as follows. Consider a control $\nu\in\cU_t$ such that $X^\nu_{t,x}$ leaves $\cO$ with at most small probability $\delta\geq0$. Then we shall construct a control $\nu^\delta$, satisfying the state constraint, by switching to some admissible control $\hat{\nu}$ shortly before $X^\nu_{t,x}$ exits $\cO$. As a result, $\nu^\delta$ coincides with $\nu$ on a set of large probability and therefore the reward is similar. Along the lines of Lemma~\ref{lem: cond for right continuity in m} we shall then obtain the desired right continuity (cf.\ Lemma~\ref{lem: V is right continuous in m in the SDE model} below).

To make this work,
we clearly need to have $\bar{\cU}(t,x)\neq\emptyset$ for all $(t,x)$ in $[0,T]\times \cO$, which is anyway necessary for the value function $\bar V$ from~\eqref{eq: def V state constraint} to be finite. However, we need a slightly stronger condition; namely, that we can switch to an admissible control in a measurable way. A particularly simple condition ensuring this, is the existence of an admissible feedback control:

\begin{assumption}\label{ass:C}
 There exists a Lipschitz continuous mapping $\hat u: \cO \to U$ such that, for all $(t,x)\in [0,T]\x \cO$, the solution $\hat X_{t,x}$ of
 \begin{equation}\label{eq: SDE hat X}
 \hat X(s)=x+\int_{t}^{s} \mu\big(\hat X(r),\hat u(\hat X(r))\big)\,dr +\int_{t}^{s} \sigma\big(\hat X(r),\hat u(\hat X(r))\big)\,dW_{r},\quad s\in [t,T]
 \end{equation}
 satisfies $\hat X_{t,x}(s)\in \cO$ for all $s\in [t,T]$, $P$-a.s.
\end{assumption}

If, e.g., $\mu(\cdot,u_0)=0$ and $\sigma(\cdot,u_0)=0$ for some $u_0\in U$, then Assumption~C is clearly satisfied for $\hat{u}\equiv u_0$. Or, under an additional smoothness condition, $\hat X_{t,x}$ will stay in $\cO$ if the Lipschitz function $\hat{u}$ satisfies
\begin{eqnarray*}
| n \sigma|(\cdot,\hat u)=0 \quad \mbox{and}\quad  \left(n^{\top}\mu +\frac12 {\rm Trace}[D n\;\sigma\sigma^{\top}] \right)(\cdot,\hat u)>0 \quad \mbox{on}\quad \partial \cO,
\end{eqnarray*}
where $n$ denotes the inner normal to $\partial \cO$; see also \cite[Proposition~3.1]{Ka94} and \cite[Lemma~III.4.3]{freidlin1985functional}.

The following is a simple condition guaranteeing the uniform integrability required in~\eqref{eq: cond unif integrability f nu delta}.

\begin{assumption}\label{ass:D}
  Either $f$ is bounded or the coefficients $\mu(x,u)$ and $\sigma(x,u)$ in the SDE~\eqref{eq: def X eds} have linear growth in $x$, uniformly in $u$.
\end{assumption}

This assumption holds in particular if the control domain $U$ is bounded.

\begin{remark}{\rm
  Assumption~\ref{ass:C} implies that $\bar V$ is locally bounded from below and Assumption~\ref{ass:D} implies that $\bar V$ is locally bounded from above. %
  }
\end{remark}

Next, we introduce the notation for the PDE related to the value function $\bar V$ from \eqref{eq: def V state constraint}.
The associated Hamilton-Jacobi-Bellman operator is given by
\begin{equation}\label{eq: def bar H example}
  \bar H(x,p,Q):=\inf_{u \in U} \big(-\bar L^{u}(x,p,Q)\big),\quad (x,p,Q)\in \R^{d}\x \R^{d}\x \M^{d},
\end{equation}
where the Dynkin operator is defined by
\[
  \bar{L}^{u}(x,p,Q):= \mu(x,u)^{\top} p +\frac12 \tr[\sigma\sigma^{\top}(x,u)Q], \quad u\in U.
\]
Similarly as above, we introduce the semicontinuous envelopes
\begin{align*}
  \bar H^{*}(x,p,Q)&:=\limsup \limits_{\tiny \begin{array}{c}(x',p',Q')\to (x,p,Q)\\ (x',p',Q')\in \cO\x\R^d\x\M^d\end{array}} \bar H(x',p',Q'),\\
  \bar H_{*}(x,p,Q)&:=\liminf \limits_{\tiny \begin{array}{c}(x',p',Q')\to (x,p,Q)\\ (x',p',Q')\in \cO\x\R^d\x\M^d\end{array}} \bar H(x',p',Q')
\end{align*}
and
\begin{align*}
  \bar V^{*}(t,x):=\limsup \limits_{\tiny \begin{array}{c}(t',x')\to (t,x)\\ (t',x')\in [0,T)\x\cO\end{array}} \bar V(t',x'),\\
  \bar V_{*}(t,x):=\liminf \limits_{\tiny \begin{array}{c}(t',x')\to (t,x)\\ (t',x')\in [0,T)\x\cO\end{array}} \bar V(t',x').
\end{align*}

We can now state the Hamilton-Jacobi-Bellman PDE; the boundary condition is discussed in
Proposition~\ref{prop: terminal cond and uniqueness state constraint} below.

\begin{theorem}\label{thm: PDE state constraint}
  Assume that $\bar V$ is locally bounded on $[0,T)\times\cO$.
  \begin{enumerate}[topsep=3pt, partopsep=0pt, itemsep=1pt,parsep=2pt]
   \item The function $\bar V^{*}$ is a viscosity subsolution on $[0,T)\times\overline{\cO}$ of
   \begin{equation}\label{eq: PDE subsol thm state constraint}
    -\partial_{t} \vp+\bar H_{*}(\cdot,D\vp,D^{2}\vp)\le 0.
   \end{equation}
   \item Under Assumptions~\ref{ass:C} and~\ref{ass:D}, the function $\bar V_{*}$ is a viscosity supersolution on $[0,T)\times\cO$ of
   \begin{equation*}%
    -\partial_{t} \vp+\bar H^{*}(\cdot,D\vp,D^{2}\vp)\ge 0.
   \end{equation*}
  \end{enumerate}
\end{theorem}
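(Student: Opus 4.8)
The two parts are derived separately, each by the standard viscosity-solution argument built on the dynamic programming principle, but now using Theorem~\ref{thm: DPP state constraint} in place of the classical DPP. The key structural point is that the state constraint has been encoded via the augmented process $\bar X = (X, Y)$ and the auxiliary constraint function $g(x,y) = \1_{(-\infty,0]}(y)$, so that $\bar V(t,x) = V(t,x,1,0)$; one then transfers the PDE argument from the ``real'' variables $(t,x)$ only, exploiting that on the relevant set the martingale component plays no role.

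\textbf{Subsolution property (i).} Let $\vp$ be a $C^{1,2}$ test function and let $(t_0,x_0)\in[0,T)\times\overline\cO$ be a local maximum of $\bar V^* - \vp$ with $(\bar V^* - \vp)(t_0,x_0)=0$. Assume for contradiction that $-\partial_t\vp + \bar H_*(\cdot,D\vp,D^2\vp) > 0$ at $(t_0,x_0)$. Then, by definition of $\bar H_*$ and continuity, there is a bounded open neighborhood $B\subseteq[0,T)\times\R^d$ of $(t_0,x_0)$ and a strictly perturbed test function $\bar\vp(s,y) := \vp(t_0,x_0) + (|s-t_0|^2 + |y-x_0|^4)$ such that $-\partial_t\bar\vp - \bar L^u(\cdot,D\bar\vp,D^2\bar\vp) > 0$ on $\overline B\cap([0,T)\times\overline\cO)$ for all $u\in U$, while $\eta := \min_{\partial B}(\bar\vp - \vp) > 0$. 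Pick $(t_\eps,x_\eps)\in B\cap([0,T)\times\cO)$ with $\bar V(t_\eps,x_\eps)\geq \bar V^*(t_0,x_0)-\eps$; this is possible since the $\limsup$ defining $\bar V^*$ is taken over $[0,T)\times\cO$. For \emph{any} admissible $\nu\in\bar\cU(t_\eps,x_\eps)$, the process $X^\nu_{t_\eps,x_\eps}$ stays in $\cO$, hence in particular in $\overline B\cap([0,T)\times\overline\cO)$ up to the first exit time $\tau^\nu$ of $(s,X^\nu_{t_\eps,x_\eps}(s))$ from $B$; It\^o's formula and the differential inequality then give $\bar\vp(t_\eps,x_\eps)\geq E[\bar\vp(\tau^\nu,X^\nu_{t_\eps,x_\eps}(\tau^\nu))]$. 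Using that $\bar\vp - \vp \geq \eta$ on $\partial B$ and $\bar\vp(t_\eps,x_\eps)\to\bar\vp(t_0,x_0)=\vp(t_0,x_0)$, one obtains $\bar V(t_\eps,x_\eps)\geq E[\phi(\tau^\nu,X^\nu_{t_\eps,x_\eps}(\tau^\nu))] + \eta - o(1)$ with $\phi := \vp$ (noting $\bar V \leq \vp$ near $(t_0,x_0)$ and extending/truncating $\phi$ so that $\bar V\leq\phi$ globally, in the usual way). Taking the supremum over $\nu\in\bar\cU(t_\eps,x_\eps)$ contradicts Theorem~\ref{thm: DPP state constraint}(i) for $\eps$ small, provided Assumption~$\bar A$ holds---which follows from the flow property of~\eqref{eq: def X eds} exactly as in Lemma~\ref{lem: verification assumption section dynamics}. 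This establishes (i) with no need for Assumptions~\ref{ass:C}, \ref{ass:D}.

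\textbf{Supersolution property (ii).} Here the work is to verify the hypotheses of Theorem~\ref{thm: DPP state constraint}(ii), chiefly the right-continuity condition~\eqref{eq:rightContCondition}, $V(t,x,1,0)=V(t,x,1,0+)$. The plan is to prove this via Lemma~\ref{lem: cond for right continuity in m}, applied to the augmented problem: given $\delta>0$ and a near-optimal $\nu^\delta\in\cU(t,x,1,\delta)$---so $X^{\nu^\delta}_{t,x}$ leaves $\cO$ with probability at most $\delta$---one constructs $\tilde\nu^\delta\in\bar\cU(t,x)=\cU(t,x,1,0)$ by switching to the admissible feedback control $\hat u$ of Assumption~\ref{ass:C} just before $X^{\nu^\delta}_{t,x}$ can exit $\cO$ (e.g.\ at the first time the distance $d(X^{\nu^\delta}_{t,x})$ drops below a small threshold $\kappa$, then letting $\kappa\downarrow 0$ along with $\delta$). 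Since $\hat X$ started anywhere in $\cO$ stays in $\cO$, the concatenation is admissible; and since $\{X^{\nu^\delta}_{t,x}(T)\neq X^{\tilde\nu^\delta}_{t,x}(T)\}$ has probability tending to $0$, while Assumption~\ref{ass:D} delivers the uniform integrability~\eqref{eq: cond unif integrability f nu delta} of $[f(X^{\nu^\delta}_{t,x}(T)) - f(X^{\tilde\nu^\delta}_{t,x}(T))]^+$ (either $f$ bounded, or linear growth of the coefficients gives uniform $L^2$ bounds on the state, hence uniform integrability via the subquadratic growth of $f^-$ and a moment estimate for the positive part after a preliminary truncation of $f$), Lemma~\ref{lem: cond for right continuity in m} yields~\eqref{eq:rightContCondition}. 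The remaining hypothesis of Theorem~\ref{thm: DPP state constraint}(ii), lower semicontinuity of $(s',x')\mapsto F(s',x';\nu)$ on $[0,t_0]\times\cO$, is exactly the content established in Lemma~\ref{lem: verification assumption section dynamics}, and Assumption~\ref{ass:B'} for $\bar X$ holds by the same lemma (the $Y$-component being a pathwise functional of $X$ causes no difficulty). With Theorem~\ref{thm: DPP state constraint}(ii) in hand, the supersolution argument is the classical one: suppose $\vp\in C^{1,2}$ and $(t_0,x_0)\in[0,T)\times\cO$ is a local minimum of $\bar V_* - \vp$ with value $0$ and $-\partial_t\vp + \bar H^*(\cdot,D\vp,D^2\vp) < 0$ there; pick $(\hat u,\hat a)$ realizing a value close to the infimum in $\bar H^*$ (only $\hat u\in U$ matters, the $Y$-dynamics being trivial), a neighborhood $B$ and perturbed $\bar\vp(s,y):=\vp(t_0,x_0) - (|s-t_0|^2 + |y-x_0|^4)$ with $-\partial_t\bar\vp - \bar L^{\hat u}(\cdot,D\bar\vp,D^2\bar\vp)<0$ on $B$ and $\eta:=\min_{\partial B}(\vp-\bar\vp)>0$; apply It\^o to the constant control $\hat u$ started at $(t_\eps,x_\eps)$ chosen via~\eqref{eq:DPPSCii} with first exit time $\tau$ from $B$; since $B\subseteq[0,T)\times\cO$ the path stays admissible up to $\tau$, giving $\bar V(t_\eps,x_\eps)\leq E[\phi(\tau,X^{\hat u}_{t_\eps,x_\eps}(\tau))] - \eta + o(1)$, contradicting Theorem~\ref{thm: DPP state constraint}(ii) applied at $(t_\eps,x_\eps)$ with the family $\{\tau^\nu\}$ reduced to this single $\tau$.

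\textbf{Main obstacle.} The delicate point is the construction of $\tilde\nu^\delta$ and the proof that $P\{X^{\nu^\delta}_{t,x}(T)\neq X^{\tilde\nu^\delta}_{t,x}(T)\}\to 0$: one must switch to $\hat u$ \emph{before} $X^{\nu^\delta}$ reaches $\partial\cO$ so that the concatenated path is genuinely admissible, yet the switching region shrinks to $\partial\cO$ slowly enough that the switching event is asymptotically contained in $\{X^{\nu^\delta}_{t,x}$ exits $\cO\}\cup\{X^{\nu^\delta}_{t,x}$ comes within $\kappa$ of $\partial\cO\}$, whose probability one wants to control. This requires care with the joint choice of $\kappa=\kappa(\delta)$ and uses continuity of paths~\eqref{eq:contPaths}; I would in fact isolate this as a separate lemma (``Lemma: $V$ is right continuous in $m$ in the SDE model'', as flagged in the text) and prove it there, so that the proof of Theorem~\ref{thm: PDE state constraint} itself reduces to quoting it together with Theorem~\ref{thm: DPP state constraint} and Lemma~\ref{lem: verification assumption section dynamics}.
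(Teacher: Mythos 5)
Your proposal follows essentially the same route as the paper: derive both parts from Theorem~\ref{thm: DPP state constraint} applied to first exit times from a small neighborhood, with the subsolution part needing only Assumption~$\bar{\mathrm{A}}$ and the supersolution part requiring the right-continuity $V(t,x,1,0+)=V(t,x,1,0)$, which you correctly propose to establish via Lemma~\ref{lem: cond for right continuity in m} and a switching argument to the feedback control of Assumption~\ref{ass:C}. That is precisely the content of the paper's Lemma~\ref{lem: V is right continuous in m in the SDE model}, and the subsolution argument matches what the paper carries out implicitly by invoking Lemma~\ref{lem: DPP state constraint}(i) and the proof of Theorem~\ref{thm: PDE characterization V}.

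There is, however, a concrete gap in the final step of your supersolution argument. You apply Theorem~\ref{thm: DPP state constraint}(ii) to the constant control $\hat u\in U$ realizing (approximately) the infimum in $\bar H^*$, reasoning that ``the path stays admissible up to $\tau$.'' But the inequality~\eqref{eq:DPPSCii} is a supremum only over $\nu\in\bar\cU(t_\eps,x_\eps)$, and the constant control $\hat u$ has no reason whatsoever to keep $X^{\hat u}_{t_\eps,x_\eps}$ inside $\cO$ on $(\tau,T]$; admissibility up to the exit time $\tau$ is not enough to place $\hat u$ in $\bar\cU(t_\eps,x_\eps)$. To close this, you need to concatenate: after the first exit time $\tau$ of $(s,X^{\hat u}_{t_\eps,x_\eps}(s))$ from $B\subseteq[0,T)\times\cO$, switch to the feedback control $x\mapsto\hat u(x)$ of Assumption~\ref{ass:C} (an unfortunate notational clash with the constant control), so that the concatenated control lies in $\bar\cU(t_\eps,x_\eps)$ and the state path on $[t_\eps,\tau]$ is unchanged. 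This is exactly the content of the paper's Lemma~\ref{lem: local admissibility bar V}, which the paper bundles together with the right-continuity into Lemma~\ref{lem: DPP state constraint}(ii). The fix is straightforward with the tools you already have in hand, but as written the proposal leaves the admissibility of the control fed into Theorem~\ref{thm: DPP state constraint}(ii) unjustified.
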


The proof is given below, after some auxiliary results. We first verify the right-continuity condition~\eqref{eq:rightContCondition} for the value function $V(t,x,y,m)$ introduced in Section~\ref{sec: DPP state constraint}.

\begin{lemma}\label{lem: V is right continuous in m in the SDE model}
  Let Assumptions \ref{ass:C} and \ref{ass:D} hold true. Then $V(t,x,1,0+)=V(t,x,1,0)$ for all $(t,x)\in [0,T]\x \cO$.
\end{lemma}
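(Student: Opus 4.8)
The plan is to verify the hypotheses of Lemma~\ref{lem: cond for right continuity in m}, applied to the augmented state process $\bar X=(X,Y)$ on $S\times[0,\infty)$ with $g(x,y):=\1_{(-\infty,0]}(y)$, at the point $(t,x,1,0)\in\bD$ (note $(t,x)\in[0,T]\times\cO$ gives $\bar\cU(t,x)=\cU(t,x,1,0)\neq\emptyset$ by~\eqref{eq:fullDomain}, so indeed $(t,x,1,0)\in\bD$). Lemma~\ref{lem: cond for right continuity in m} produces, for each $\delta>0$, a control $\nu^\delta\in\cU(t,x,1,\delta)$ that is $\delta$-optimal (after truncation at $\delta^{-1}$); since $\nu^\delta$ is admissible at constraint level $\delta$, we have $P\{Y^{\nu^\delta}_{t,x,1}(T)\leq 0\}=E[g(\bar X^{\nu^\delta}_{t,x,1}(T))]\leq\delta$, i.e.\ $X^{\nu^\delta}_{t,x}$ leaves $\cO$ with probability at most $\delta$. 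The task is then to exhibit, for small $\delta$, a control $\tilde\nu^\delta\in\bar\cU(t,x)=\cU(t,x,1,0)$ such that $P\{X^{\nu^\delta}_{t,x}(T)\neq X^{\tilde\nu^\delta}_{t,x}(T)\}\to0$ and such that the family $\{[f(X^{\nu^\delta}_{t,x}(T))-f(X^{\tilde\nu^\delta}_{t,x}(T))]^+:0<\delta\leq\delta_0\}$ is uniformly integrable.

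The construction of $\tilde\nu^\delta$ is the crux. The idea sketched before the statement is to switch from $\nu^\delta$ to the admissible feedback control $\hat u$ of Assumption~\ref{ass:C} just before $X^{\nu^\delta}_{t,x}$ exits $\cO$. Concretely, fix a small level $\eta>0$ and let $\theta^\delta:=\inf\{s\geq t: d(X^{\nu^\delta}_{t,x}(s))\leq\eta\}\wedge T$ be the first time the distance to $\R^d\setminus\cO$ drops to $\eta$; on $\{\theta^\delta<T\}$ set $\tilde\nu^\delta:=\nu^\delta$ on $[t,\theta^\delta]$ and then follow the feedback control $\hat u(\hat X(\cdot))$ started from $X^{\nu^\delta}_{t,x}(\theta^\delta)\in\cO$, while on $\{\theta^\delta=T\}$ (equivalently, $X^{\nu^\delta}_{t,x}$ stayed in the $\eta$-interior of $\cO$ on $[t,T]$) set $\tilde\nu^\delta:=\nu^\delta$. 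By Assumption~\ref{ass:C} the spliced trajectory stays in $\cO$, so $\tilde\nu^\delta\in\bar\cU(t,x)$ provided the concatenation is an admissible control — which holds by stability under concatenation, exactly as in the proof of Lemma~\ref{lem: verification assumption section dynamics} (using that $\hat u$ is a feedback, hence the post-$\theta^\delta$ control is adapted appropriately, and $\theta^\delta$ is a stopping time). The two controls agree on the event $\{\theta^\delta=T\}=\{\inf_{s\in[t,T]}d(X^{\nu^\delta}_{t,x}(s))>\eta\}$, so $\{X^{\nu^\delta}_{t,x}(T)\neq X^{\tilde\nu^\delta}_{t,x}(T)\}\subseteq\{\theta^\delta<T\}\subseteq\{\inf_{s\in[t,T]}d(X^{\nu^\delta}_{t,x}(s))\leq\eta\}$.

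It remains to control $P\{\theta^\delta<T\}$. Here one lets $\eta=\eta(\delta)\downarrow0$ slowly. Writing $A^\delta_\eta:=\{\inf_{s\in[t,T]}d(X^{\nu^\delta}_{t,x}(s))\in(0,\eta]\}$ and $B^\delta:=\{Y^{\nu^\delta}_{t,x,1}(T)\leq0\}=\{\inf_{s\in[t,T]}d(X^{\nu^\delta}_{t,x}(s))\leq0\}$, we have $P\{\theta^\delta<T\}\leq P(A^\delta_\eta)+P(B^\delta)\leq P(A^\delta_\eta)+\delta$, so it suffices to show $P(A^{\delta}_{\eta(\delta)})\to0$ for a suitable choice $\eta(\delta)\downarrow0$. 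This is where uniform control over the family $\{X^{\nu^\delta}_{t,x}\}_\delta$ is needed: from the standard SDE estimate $E[\sup_{s\in[t,T]}|X^{\nu^\delta}_{t,x}(s)|^2]\leq C(1+E[\int_t^T|\nu^\delta_s|^2ds])$, one would like an a priori bound on $E[\int_t^T|\nu^\delta_s|^2ds]$ uniform in $\delta$; this is not automatic since $\nu^\delta$ need only be near-optimal, not of bounded energy, so one likely restricts attention (as Lemma~\ref{lem: cond for right continuity in m} and the surrounding remarks suggest) to the case where $f$ is bounded, or truncates $f$, so that near-optimality can be arranged within a fixed ball of controls — alternatively, one argues that replacing $\nu^\delta$ by $\nu^\delta\1_{\{|\nu^\delta|\leq R\}}+u_0\1_{\{|\nu^\delta|>R\}}$ for large $R$ changes $F$ and $G$ negligibly. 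Granting such a uniform bound, $\{X^{\nu^\delta}_{t,x}\}_\delta$ is tight in $C([t,T];\R^d)$, and since each path charges $\{d(\cdot)\leq\eta\}$ with probability tending to $0$ as $\eta\downarrow0$ uniformly over a tight family (because $\{X^{\nu^\delta}_{t,x}(s)\in\cO\,\forall s\}\supseteq B^{\delta c}$ and $\cO$ is open), a diagonal argument yields $\eta(\delta)\downarrow0$ with $P(A^\delta_{\eta(\delta)})\to0$. Finally, the uniform integrability of $\{[f(X^{\nu^\delta}_{t,x}(T))-f(X^{\tilde\nu^\delta}_{t,x}(T))]^+\}_\delta$ follows from Assumption~\ref{ass:D}: if $f$ is bounded the family is bounded; if the coefficients have linear growth then $\sup_\delta E[|X^{\nu^\delta}_{t,x}(T)|^2]<\infty$ and $\sup_\delta E[|X^{\tilde\nu^\delta}_{t,x}(T)|^2]<\infty$ (the feedback $\hat u$ is Lipschitz, hence also satisfies linear-growth estimates), and the subquadratic growth of $f^-$ from~\eqref{eq: semi cont f and g}, combined with the quadratic growth of $f$, gives uniform integrability via de la Vall\'ee-Poussin. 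Then Lemma~\ref{lem: cond for right continuity in m} applies and gives $V(t,x,1,0+)=V(t,x,1,0)$. The main obstacle, as indicated, is obtaining the uniform-in-$\delta$ moment control on $X^{\nu^\delta}_{t,x}$ needed for the tightness argument, which forces either a boundedness/truncation hypothesis on $f$ or a separate argument that near-optimal controls can be taken with controlled energy.
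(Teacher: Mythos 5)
Your overall strategy -- switch from $\nu^\delta$ to the feedback control $\hat u$ of Assumption~C at a small level set of the distance to $\partial\cO$, then invoke Lemma~\ref{lem: cond for right continuity in m} -- is exactly the paper's, but your implementation contains a real gap and you misdiagnose it as an obstacle that would force extra hypotheses. You claim that choosing the switching level $\eta(\delta)\downarrow0$ with $P(A^\delta_{\eta(\delta)})\to0$ requires tightness of $\{X^{\nu^\delta}_{t,x}\}_\delta$, hence uniform moment control, hence an a priori bound on $E[\int_t^T|\nu^\delta_s|^2ds]$. None of this is needed. Fix $\delta$ and set $Z^\delta:=\inf_{s\in[t,T]}d(X^{\nu^\delta}_{t,x}(s))$ (equivalently $Y^{\nu^\delta}_{t,x,1}(T)$, up to truncation at $1$). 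Since $\nu^\delta\in\cU(t,x,1,\delta)$, we have $P\{Z^\delta\le 0\}\le\delta$, and since $\{Z^\delta\le\eta\}\downarrow\{Z^\delta\le 0\}$ as $\eta\downarrow0$, continuity of measure from above gives $P\{Z^\delta\le\eta\}\to P\{Z^\delta\le 0\}\le\delta$. Thus for each $\delta$ one can pick $\eps_\delta\in(0,1\wedge d(x))$ \emph{depending on the law of $Z^\delta$} so that $P\{Z^\delta\le\eps_\delta\}\le 2\delta$; defining $\tau^\delta$ as the first hitting time of $\{Y^{\nu^\delta}_{t,x,1}\le\eps_\delta\}$ (the paths of $Y$ are nonincreasing), one gets $P\{\tau^\delta\le T\}\le 2\delta\to0$ with no uniformity over $\delta$, no tightness, and no diagonal argument. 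This is precisely the paper's choice of $\eps_\delta$ and $A^\delta$, and your $\theta^\delta$ is the same as the paper's $\tau^\delta$ once $\eta$ is replaced by this $\delta$-dependent level.

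Your second worry is also misplaced: the SDE estimate $E[\sup_s|X^{\nu}_{t,x}(s)|^2]\le C\big(1+E[\int_t^T|\nu_s|^2ds]\big)$ is the one for coefficients growing in $u$, but under Assumption~\ref{ass:D} the coefficients $\mu(x,u)$ and $\sigma(x,u)$ have linear growth in $x$ \emph{uniformly in $u$} (and are uniformly Lipschitz in $x$ since they are jointly Lipschitz), so the standard Gronwall argument yields $E[\sup_s|X^{\nu}_{t,x}(s)|^p]\le C_p(1+|x|^p)$ uniformly over all $\nu\in\cU$, with no dependence on the control's energy. Hence $\{f(X^\nu_{t,x}(T)):\nu\in\cU\}$ is uniformly integrable directly from the quadratic growth of $f$, which is stronger than and supersedes the uniform integrability in~\eqref{eq: cond unif integrability f nu delta}; this is the paper's final step. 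In short, the ``boundedness/truncation hypothesis on $f$'' or ``controlled energy'' you conclude is needed is not: both issues dissolve once you (a) choose the level $\eps_\delta$ per fixed $\delta$ by continuity of measure and (b) use the correct, control-independent moment estimate afforded by Assumption~\ref{ass:D}.
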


\begin{proof}
  For $\delta>0$, let $\nu^{\delta}\in \cU(t,x,\delta)$ be as in Lemma~\ref{lem: cond for right continuity in m}. Then the process $Y_{t,x,y}^{\nu^{\delta}}$ defined in~\eqref{eq:defY} satisfies  $Y_{t,x,1}^{\nu^{\delta}}(T)>0$ outside of a set of measure at most $\delta$. It follows that we can find $\eps_{\delta}\in (0,1\wedge d(x))$ such that
  the set $A^{\delta}:=\{Y_{t,x,1}^{\nu}(T)\le \eps_{\delta}\}$ satisfies
  $P[A^{\delta}]\le 2\delta$.
  Let $\tau^{\delta}$ denote the first time when $Y_{t,x,1}^{\nu^{\delta}}$ reaches the level $\eps_{\delta}$ and set
  $$
  \tilde \nu^{\delta}:=\nu^{\delta}\1_{[t,\tau^{\delta}]}+\1_{(\tau^{\delta},T]} \hat u(\hat X^{\delta}),
  $$
  where $\hat X^{\delta}$ is the solution of~\eqref{eq: SDE hat X} on $[\tau^{\delta},T]$ with initial condition given by $\hat X^{\delta}(\tau^{\delta})=X_{t,x}^{\nu^{\delta}}(\tau^{\delta})$.
  Since the paths of $Y_{t,x,1}^{\nu^{\delta}}$ are nonincreasing, we have
  \[
    \lim\limits_{\delta \downarrow 0}P[X_{t,x}^{\tilde \nu^{\delta}}(T)\ne X_{t,x}^{  \nu^{\delta}}(T) ]
    \le
    \lim\limits_{\delta \downarrow 0}P[\tau^{\delta}\le T ]
    =
    \lim\limits_{\delta \downarrow 0}P[A^{\delta}]=0.
  \]
  Next, we check that $\{f(X^{\nu}_{t,x}(T)),\,\nu \in \cU\}$ is uniformly integrable. This is trivial if $f$ is bounded. Otherwise, Assumption~\ref{ass:D} yields that the coefficients $\mu(x,u)$ and $\sigma(x,u)$ have uniformly linear growth in $x$, and of course they are uniformly Lipschitz in $x$ as they are jointly Lipschitz. Thus $\{X^{\nu}_{t,x}(T),\,\nu \in \cU\}$ is bounded in $L^p$ for any finite $p$ and the uniform integrability follows from the quadratic
  growth assumption~\eqref{eq: cond growth f and g} on $f$. It remains to apply
  Lemma~\ref{lem: cond for right continuity in m}.
\end{proof}

\begin{lemma}\label{lem: local admissibility bar V}
  Let Assumption~\ref{ass:C} hold true and let $B$ be an open neighborhood of $(t,x)\in [0,T]\x \cO$. For all $\nu\in \cU_{t}$ there exists $\bar \nu\in \cU_{t}$ such that
  $$
    \nu\1_{[t,\tau]}+ \bar \nu\1_{(\tau,T]} \;\in\; \bar \cU(t,x),
  $$
  where $\tau$ is the first exit time of $(s,X_{t,x}^{\nu}(s))_{s\ge t}$ from $B$.
\end{lemma}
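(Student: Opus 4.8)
The plan is to follow $\nu$ up to the first exit time $\tau$ from $B$ and then switch to the admissible feedback control $\hat u$ furnished by Assumption~\ref{ass:C}, started from the random point $\big(\tau,X_{t,x}^\nu(\tau)\big)$. I would begin by recording that $\overline B\subseteq[0,T]\times\cO$ (this is implicit in the statement, and in any case unavoidable: a control lies in $\bar\cU(t,x)$ only if its state process stays in the \emph{open} set $\cO$ at every time, and at the switching time $\tau$ the state $X_{t,x}^\nu(\tau)$ lies in $\overline B$, so one needs $\overline B\subseteq[0,T]\times\cO$ in order to apply Assumption~\ref{ass:C} there). I would also note that, since $\nu\in\cU_t$, the process $X_{t,x}^\nu$ is $\F^t$-adapted with continuous paths, so $\tau$ is an $\F^t$-stopping time with values in $[t,T]$.

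Next I would construct $\bar\nu$. Using Assumption~\ref{ass:C} and the flow property of the feedback equation~\eqref{eq: SDE hat X}---argued exactly as for the verification of Assumptions~A,~$\bar A$,~B' in the proof of Lemma~\ref{lem: verification assumption section dynamics}, cf.\ \cite[Proposition~5.4]{BT10}---I obtain a strong solution $\hat X$ of~\eqref{eq: SDE hat X} on $[\tau,T]$ with $\hat X(\tau)=X_{t,x}^\nu(\tau)$ and $\hat X(s)\in\cO$ for all $s\in[\tau,T]$, $P$-a.s. I would then set $\bar\nu:=u_\ast\1_{[0,\tau]}+\hat u(\hat X)\1_{(\tau,T]}$ for an arbitrary fixed $u_\ast\in U$. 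Checking $\bar\nu\in\cU_t$ is routine: it is $U$-valued and $\F^t$-predictable because $\tau$ is an $\F^t$-stopping time, $\hat X$ is $\F^t$-adapted and continuous, and $\hat u$ is continuous; and it is square integrable because $x\mapsto\mu(x,\hat u(x))$ and $x\mapsto\sigma(x,\hat u(x))$ are Lipschitz (hence of linear growth), so the usual $L^2$-estimate for~\eqref{eq: SDE hat X} bounds $E\big[\sup_{s\in[\tau,T]}|\hat X(s)|^2\big]$ in terms of $E\big[|X_{t,x}^\nu(\tau)|^2\big]<\infty$, and the Lipschitz property of $\hat u$ then controls $E\big[\int_0^T|\bar\nu_s|^2\,ds\big]$.

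Finally I would verify the conclusion for $\tilde\nu:=\nu\1_{[t,\tau]}+\bar\nu\1_{(\tau,T]}=\nu\1_{[t,\tau]}+\hat u(\hat X)\1_{(\tau,T]}$. Uniqueness of the strong solution of~\eqref{eq: def X eds} together with the definition of $\hat X$ gives $X_{t,x}^{\tilde\nu}=X_{t,x}^\nu$ on $[t,\tau]$ and $X_{t,x}^{\tilde\nu}=\hat X$ on $[\tau,T]$, $P$-a.s. Hence, $P$-a.s.: for $s<\tau$ one has $\big(s,X_{t,x}^{\tilde\nu}(s)\big)=\big(s,X_{t,x}^\nu(s)\big)\in B\subseteq[0,T]\times\cO$; for $s=\tau$, continuity of the paths and the definition of the exit time give $\big(s,X_{t,x}^{\tilde\nu}(s)\big)\in\overline B\subseteq[0,T]\times\cO$; and for $s>\tau$, $X_{t,x}^{\tilde\nu}(s)=\hat X(s)\in\cO$. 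In all cases $X_{t,x}^{\tilde\nu}(s)\in\cO$, so $\tilde\nu\in\bar\cU(t,x)$, as required.

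I expect the only genuinely delicate point to be the passage from the deterministic formulation of Assumption~\ref{ass:C} to the feedback equation started at the \emph{random} datum $(\tau,X_{t,x}^\nu(\tau))$---that is, producing $\hat X$ as a legitimate $\F^t$-predictable process which stays in $\cO$, and ensuring that the concatenation at $\tau$ does not violate the state constraint at the switching time itself (this is exactly where $\overline B\subseteq[0,T]\times\cO$ and the flow property are used). The remaining steps---predictability and integrability of $\bar\nu$ and the pasting identity for $X_{t,x}^{\tilde\nu}$---are standard and already used, in similar form, elsewhere in the paper.
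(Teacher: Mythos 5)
Your proof takes the same route as the paper's (two-line) argument: switch at $\tau$ to the feedback control $\hat u$ from Assumption~\ref{ass:C} applied to the solution $\hat X$ of \eqref{eq: SDE hat X} started at the random datum $(\tau,X_{t,x}^{\nu}(\tau))$, and check that the concatenated state process remains in $\cO$. The extra care you take---noting that $\overline B\subseteq[0,T]\times\cO$ is needed for the switching time itself, and verifying $\F^t$-predictability and square integrability of $\bar\nu$---is correct and simply fills in details the paper leaves implicit.
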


\begin{proof}
  Let $\hat X_{\tau,X_{t,x}^{\nu}(\tau)}$ be the solution of \eqref{eq: SDE hat X} on $[\tau,T]$ with the (square integrable) initial condition $X_{t,x}^{\nu}(\tau)$ at time $\tau$.
  Then the claim holds true for $\bar \nu:=\nu\1_{[t,\tau]}+\1_{(\tau,T]}\hat u(\hat X_{\tau,X_{t,x}^{\nu}(\tau)})$.
\end{proof}

We have the following counterpart of Lemma~\ref{lem: DPP constraint in expectation}.

\begin{lemma}\label{lem: DPP state constraint}
  (i) Let $B\subseteq [0,T]\times\R^d$ be an open neighborhood of a point $(t,x)\in [0,T]\times\cO$ such that $\bar{V}(t,x)$ is finite and let $\vp: \overline{B} \to \R$ be a continuous function such that $\bar{V}\le \vp$ on $\overline{B}$. For all $\eps>0$ there exists $\nu\in \bar{\cU}(t,x)$ such that
  \[
    \bar V(t,x)\le E \big[ \varphi(\tau,X_{t,x}^\nu(\tau)) \big]+\eps,
  \]
  where $\tau$ is the first exit time of  $(s,X_{t,x}^{\nu}(s))_{s\ge t}$ from $B$.

  (ii) Let Assumptions \ref{ass:C} and \ref{ass:D} hold true and let $B\subseteq [0,T]\times\cO$
  be an open neighborhood of $(t,x)$. For any $\nu\in \cU_{t}$ and any continuous function $\varphi: \overline{B} \to \R$ satisfying $\bar V\ge  \varphi$ on $\overline{B}$,
  \[
    \bar V(t,x) \geq E \big[ \varphi(\tau,X_{t,x}^\nu(\tau)) \big],
  \]
  where $\tau$ is the first exit time of  $(s,X_{t,x}^{\nu}(s))_{s\ge t}$ from $B$.
\end{lemma}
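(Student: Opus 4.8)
The plan is to obtain both parts by specializing the general state-constraint dynamic programming principle (Theorem~\ref{thm: DPP state constraint}) to the family of stopping times $\tau^\nu$ given by the first exit time of $(s,X_{t,x}^\nu(s))_{s\ge t}$ from the open set $B$, together with the test function $\phi$, after checking the hypotheses of that theorem in the present controlled-diffusion setup. For part~(i), I would invoke Theorem~\ref{thm: DPP state constraint}(i): Assumption~$\bar A$ holds by Lemma~\ref{lem: verification assumption section dynamics} (it was proved there together with Assumption~A), so the theorem gives $\bar V(t,x)\le \sup_{\nu\in\bar\cU(t,x)} E[\phi(\tau^\nu,X_{t,x}^\nu(\tau^\nu))]$ for any measurable $\phi\ge \bar V$; applying this with $\phi=\vp$ on $\overline B$ (which dominates $\bar V$ there) and then extracting an $\eps$-optimal $\nu$ from the supremum yields the claim. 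One small point to address: $\vp$ is only defined on $\overline B$, but since $\tau^\nu$ is the exit time from $B$ and $B$ is a neighborhood of $(t,x)$ with $(t,x)\in[0,T]\times\cO$, the process $(s,X_{t,x}^\nu(s))$ stays in $\overline B$ up to time $\tau^\nu$, so $\vp(\tau^\nu,X_{t,x}^\nu(\tau^\nu))$ is well-defined; one extends $\vp$ arbitrarily measurably off $\overline B$ to fit the hypothesis of Theorem~\ref{thm: DPP state constraint}(i).

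For part~(ii), I would apply Theorem~\ref{thm: DPP state constraint}(ii). Assumption~\ref{ass:B'} for the augmented state process $\bar X=(X,Y)$ on $S\times[0,\infty)$ follows from Lemma~\ref{lem: verification assumption section dynamics}; conditions~\eqref{eq:contPaths}--\eqref{eq:fullDomain} hold since $X_{t,x}^\nu$ solves an SDE with Lipschitz coefficients (continuity of paths and $L^2$-continuity, hence continuity in probability uniformly in $r$, of $(t,x)\mapsto X_{t,x}^\nu$) and since Assumption~\ref{ass:C} supplies an admissible control, giving $\bar\cU(t,x)\ne\emptyset$ on $[0,T]\times\cO$. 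The right-continuity condition~\eqref{eq:rightContCondition}, i.e. $V(t,x,1,0)=V(t,x,1,0+)$, is exactly the content of Lemma~\ref{lem: V is right continuous in m in the SDE model}, which uses Assumptions~\ref{ass:C} and~\ref{ass:D}. Finally, the lower semicontinuity of $(s',x')\mapsto F(s',x';\nu)$ on $[0,t_0]\times\cO$ is the statement about $F$ verified in Lemma~\ref{lem: verification assumption section dynamics} (restricted to $\cO$). Thus Theorem~\ref{thm: DPP state constraint}(ii) applies, and with $\phi=\vp$ on $\overline B$ (u.s.c., since continuous there, and $\ge\bar V$), using $\tau^\nu$ as the exit time from $B\subseteq[0,T]\times\cO$, we get $\bar V(t,x)\ge E[\vp(\tau^\nu,X_{t,x}^\nu(\tau^\nu))]$ for every $\nu\in\cU_t$, which is the claim.

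The one genuine subtlety is that Theorem~\ref{thm: DPP state constraint}(ii) demands a \emph{globally} u.s.c.\ function $\phi:\bS\to[-\infty,\infty)$ with $\bar V\ge\phi$, whereas here $\vp$ is only given on $\overline B$. I would handle this by the standard device of setting $\phi:=\vp$ on $\overline B$ and $\phi:=-\infty$ off $\overline B$: this $\phi$ is u.s.c.\ on all of $\bS$ (upper semicontinuity is preserved when lowering values to $-\infty$ on the complement of a closed set, and $\overline B$ is closed), and $\phi\le\bar V$ everywhere since on $\overline B$ it equals $\vp\ge\bar V$ — wait, we need $\phi\le\bar V$... in fact Theorem~\ref{thm: DPP state constraint}(ii) requires $\bar V\ge\phi$, and $\phi=\vp\ge\bar V$ on $\overline B$ gives the wrong direction, so instead I would truncate: use $\phi:=\vp\wedge\bar V$? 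That is not u.s.c.\ in general. The clean fix is different: since $(s,X_{t,x}^\nu(s))_{s\in[t,\tau^\nu]}$ never leaves $\overline B$, only the values of the test function on $\overline B$ matter in $E[\cdot(\tau^\nu,\cdot)]$, so one may as well apply the theorem with $\phi$ taken to be any globally u.s.c.\ minorant of $\bar V$ that agrees with $\vp$ on $\partial B\cup\{(t,x)\}$ — but the cleanest route is to observe that Theorem~\ref{thm: DPP state constraint}(ii) is really applied with the \emph{constant} function ``$\bar V$'' replaced by $\vp$ in the covering argument, exactly as Lemma~\ref{lem: DPP constraint in expectation}(ii) does via Corollary~\ref{co:DPPrelaxed} with $\cD=\overline B$; so I would instead mirror the proof of Lemma~\ref{lem: DPP constraint in expectation}(ii), invoking the relaxed/localized version of the dynamic programming principle on the compact set $\overline B$, with the continuity of $\vp$ playing the role of the semicontinuity hypothesis~\eqref{eq:semicontAss} and the right-continuity Lemma~\ref{lem: V is right continuous in m in the SDE model} removing the $\delta$. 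This localized formulation is the right obstacle to be careful about; everything else is a direct citation.
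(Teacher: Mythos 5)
Your strategy for part~(i) is correct and matches the paper: invoke Theorem~\ref{thm: DPP state constraint}(i), verified via Lemma~\ref{lem: verification assumption section dynamics}, and extract an $\eps$-optimal control from the supremum. (Minor point: the extension of $\vp$ off $\overline{B}$ cannot be ``arbitrary''---it must dominate $\bar V$ globally, e.g.\ by setting $\phi:=+\infty$ off $\overline{B}$; but since the process stays in $\overline{B}$ up to $\tau$, the extension is otherwise harmless.)

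For part~(ii) there are two problems. First, you misread the hypothesis: the lemma says $\bar V\geq\vp$ on $\overline{B}$, so $\vp$ is a \emph{minorant} of $\bar V$, which is exactly the direction required by Theorem~\ref{thm: DPP state constraint}(ii). Consequently the extension $\phi:=\vp$ on $\overline{B}$, $\phi:=-\infty$ off $\overline{B}$ is u.s.c., maps into $[-\infty,\infty)$, and satisfies $\phi\leq\bar V$ on all of $\bS$---there is no subtlety at all. Your subsequent detour (truncation, mirroring the proof of Lemma~\ref{lem: DPP constraint in expectation}(ii) via Corollary~\ref{co:DPPrelaxed}) is triggered by this misreading and is unnecessary; moreover you do not actually carry that alternative argument out.

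Second, and more seriously, there is a genuine gap that you do not address: Theorem~\ref{thm: DPP state constraint}(ii) gives
$\bar V(t,x)\geq\sup_{\nu\in\bar\cU(t,x)} E[\phi(\tau^\nu,X_{t,x}^\nu(\tau^\nu))]$, with the supremum---and the family of stopping times---indexed only by \emph{admissible} controls $\nu\in\bar\cU(t,x)$. Lemma~\ref{lem: DPP state constraint}(ii) claims the bound for \emph{arbitrary} $\nu\in\cU_t$. The bridge is Lemma~\ref{lem: local admissibility bar V}: given $\nu\in\cU_t$ and the exit time $\tau$ from $B\subseteq[0,T]\times\cO$, it produces $\tilde\nu:=\nu\1_{[t,\tau]}+\bar\nu\1_{(\tau,T]}\in\bar\cU(t,x)$ which agrees with $\nu$ up to $\tau$; by the flow property $X_{t,x}^{\tilde\nu}=X_{t,x}^\nu$ on $[t,\tau]$, so the exit time and exit location are unchanged, and applying Theorem~\ref{thm: DPP state constraint}(ii) to $\tilde\nu$ yields the inequality for the original $\nu$. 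You never cite Lemma~\ref{lem: local admissibility bar V} nor make any argument to pass from admissible to general controls, so as written your proof of part~(ii) only establishes the inequality for $\nu\in\bar\cU(t,x)$, not the stated claim.
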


\begin{proof}
  In view of Lemma~\ref{lem: verification assumption section dynamics}, part~(i) is immediate from Theorem~\ref{thm: DPP state constraint}(i). Part~(ii) follows from Theorem~\ref{thm: DPP state constraint}(ii) via Lemmata~\ref{lem: V is right continuous in m in the SDE model} and~\ref{lem: local admissibility bar V} .
\end{proof}

\begin{proof}[Proof of Theorem~\ref{thm: PDE state constraint}]
  The result follows from Lemma~\ref{lem: DPP state constraint} by the arguments in the proof of
  Theorem~\ref{thm: PDE characterization V}.
\end{proof}

\subsubsection{Boundary Condition and Uniqueness}

In this section, we discuss the boundary condition and the uniqueness for the PDE in Theorem~\ref{thm: PDE state constraint}. We shall work under a slightly stronger condition on our setup.

\addtocounter{assumptionBis}{1}
\begin{assumptionBis}\label{ass: mu sigma f for comparison}
  The coefficients $\mu(x,u)$ and $\sigma(x,u)$ in the SDE~\eqref{eq: def X eds} have linear growth in $x$, uniformly in $u$.
\end{assumptionBis}

We also introduce the following regularity condition, which will be used to prove the comparison theorem.

\begin{definition}\label{def: class R}
  Consider a set $\cO\subseteq \R^d$ and a function $w:[0,T]\times \overline{\cO}\to\R$. Then $w$ is \emph{of class $\cR(\cO)$} if the following hold for any $(t,x)\in [0,T)\x\partial \cO$:
  \begin{enumerate}[topsep=3pt, partopsep=0pt, itemsep=1pt,parsep=2pt]
  \item There exist $r>0$, an open neighborhood $B$ of $x$ in $\R^d$ and a function $\ell:\R_{+}\to \R^{d}$ such that
      \begin{align}
         &\liminf_{\eps\to 0 } \eps^{-1} | \ell(\eps)|<\infty \quad \mbox{and } \label{eq: ass delta lambda 2}\\
         &y+ \ell(\eps)+o(\eps)\in \cO \quad\mbox{for all } y \in \overline{\cO}\cap B \mbox{ and } \eps \in (0,r).
         \label{eq: ass delta lambda 1}
       \end{align}
  \item There exists a function $\lambda: \R_+\to \R_{+}$ such that
     \begin{align}
         &\lim_{\eps\to 0} \lambda(\eps)=0 \quad \mbox{and} \label{eq: ass delta lambda 3}\\
         &\lim_{\eps\to 0}w\big(t+\lambda(\eps),x+ \ell(\eps)\big)=w(t,x).\hspace{90pt}\label{eq: ass w2 lambda and delta}
     \end{align}
  \end{enumerate}
\end{definition}

By~\eqref{eq: ass delta lambda 1} we mean that if $\psi:\R_+\to\R^d$ is any function of class $o(\eps)$, then there exists $r>0$ such that $y+ \ell(\eps)+\psi(\eps)\in \cO$ for all $\eps\in(0,r)$. Note that~(i) is a condition on the boundary of $\partial\cO$; it can be seen as a variant of the interior cone condition where the cone is replaced by a more general shape. Condition~(ii) essentially states that $w$ is continuous along at least one curve approaching the boundary point through this shape. In Proposition~\ref{pr:SuffCondForRO} below, we indicate a sufficient condition for $\bar V_{*}$ to be of class $\cR(\cO)$, which is stated directly in terms of the given primitives. We shall see in its proof that Definition~\ref{def: class R} is well adapted to the problem at hand (see also Remark~\ref{rem : comparison assumptions comparison} below).
Before that, let us state the uniqueness result.

\begin{proposition}\label{prop: terminal cond and uniqueness state constraint}
  Let $f$ be continuous and let Assumptions \ref{ass:C} and \ref{ass: mu sigma f for comparison} hold true. Then $\bar V$ has quadratic growth and
  the boundary condition is attained in the sense that
  \[
    \bar V^{*}(T,\cdot)\le f\quad\mbox{and}\quad \bar V_{*}(T,\cdot)\ge f \quad\mbox{on}\quad \overline{\cO}.
  \]
  Assume in addition that $\bar V_{*}$ is of class $\cR(\cO)$. Then
  \begin{enumerate}[topsep=3pt, partopsep=0pt, itemsep=1pt,parsep=2pt]
  \item $\bar V$ is continuous on  $[0,T]\x  \cO$ and admits  a continuous extension to  $[0,T]\x \overline{\cO}$,
  \item $\bar V$ is the  unique (discontinuous) viscosity solution of the state constraint problem
    \[
      -\partial_{t} \vp+\bar H(\cdot,D\vp,D^{2}\vp)=0,\quad \vp(T,\cdot)=f
    \]
    in the class of functions having polynomial growth and having a lower semicontinuous envelope of class $\cR(\cO)$.
  \end{enumerate}
\end{proposition}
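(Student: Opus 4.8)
The plan is to proceed in four stages. \emph{Stage 1 (growth and terminal condition).} The quadratic growth of $\bar V$ follows from the linear growth of $\mu,\sigma$ (Assumption~\ref{ass: mu sigma f for comparison}), which gives $L^2$-bounds on $X_{t,x}^\nu(T)$ uniform over $\nu$, combined with the quadratic growth of $f$; the lower bound uses the admissible feedback control from Assumption~\ref{ass:C}. For the terminal condition, the inequality $\bar V^*(T,\cdot)\le f$ on $\overline\cO$ follows from $L^2$-continuity of $(t,x)\mapsto X^\nu_{t,x}(T)$ as $t\uparrow T$ (so $X^\nu_{t,x}(T)\to x$ in $L^2$), uniform integrability from the growth bounds, and upper semicontinuity considerations; $\bar V_*(T,\cdot)\ge f$ follows by evaluating along the feedback control $\hat u$, whose trajectory converges to $x$.

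\emph{Stage 2 (the comparison theorem).} This is the heart of the matter and is exactly what Appendix~\ref{se:appendixComparison} is designed to supply: I would invoke the comparison theorem there, which compares a subsolution $u$ (taken to be $\bar V^*$) and a supersolution $w$ (taken to be $\bar V_*$) of the state-constraint problem, under the hypotheses that $u$ has polynomial growth, $w$ is of class $\cR(\cO)$, and the terminal data agree. The subsolution property of $\bar V^*$ on $[0,T)\times\overline\cO$ is Theorem~\ref{thm: PDE state constraint}(i), the supersolution property of $\bar V_*$ on $[0,T)\times\cO$ is Theorem~\ref{thm: PDE state constraint}(ii) (valid under Assumptions~\ref{ass:C} and~\ref{ass: mu sigma f for comparison}, the latter implying Assumption~\ref{ass:D}), the terminal conditions are Stage~1, and $\bar V_*\in\cR(\cO)$ is the standing hypothesis. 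The comparison theorem then yields $\bar V^*\le\bar V_*$ on $[0,T]\times\overline\cO$. Since trivially $\bar V_*\le\bar V\le\bar V^*$ on $[0,T)\times\cO$, we conclude $\bar V_*=\bar V=\bar V^*$ there, so $\bar V$ is continuous on $[0,T)\times\cO$; together with the terminal condition this gives continuity on $[0,T]\times\cO$, and the common value $\bar V^*=\bar V_*$ furnishes a continuous extension to $[0,T]\times\overline\cO$. This proves~(i).

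\emph{Stage 3 (uniqueness).} Let $\tilde V$ be any viscosity solution of the state-constraint PDE with terminal data $f$, having polynomial growth, and whose lower semicontinuous envelope $\tilde V_*$ is of class $\cR(\cO)$. Applying the comparison theorem twice—once to the subsolution $\tilde V^*$ against the supersolution $\bar V_*$, and once to the subsolution $\bar V^*$ against the supersolution $\tilde V_*$—gives $\tilde V^*\le\bar V_*\le\bar V\le\bar V^*\le\tilde V_*\le\tilde V\le\tilde V^*$, forcing equality throughout; hence $\tilde V=\bar V$. This yields~(ii).

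\emph{Main obstacle.} The genuinely hard step is Stage~2, specifically ensuring that the comparison theorem of Appendix~\ref{se:appendixComparison} applies with $w=\bar V_*$; the delicate point—standard in state-constrained problems—is the boundary behaviour, where the supersolution property of $\bar V_*$ is only known on the open set $\cO$ while the subsolution property of $\bar V^*$ extends to $\overline\cO$. This asymmetry is precisely what the regularity class $\cR(\cO)$ (the interior-cone-type condition~\eqref{eq: ass delta lambda 1} together with the directional continuity~\eqref{eq: ass w2 lambda and delta}) is built to handle, allowing one to perturb a boundary maximum point of $\bar V^*-w$ slightly into the interior along the curve $\eps\mapsto(t+\lambda(\eps),x+\ell(\eps))$ without losing control of the value. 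Verifying that this perturbation argument, combined with the usual doubling-of-variables and growth estimates, closes the comparison is the real content; everything else is routine given the results already established in the excerpt.
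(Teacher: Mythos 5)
Your strategy coincides with the paper's own proof: quadratic growth and the terminal condition via standard SDE estimates, the viscosity sub-/supersolution properties from Theorem~\ref{thm: PDE state constraint} (using that Assumption~\ref{ass: mu sigma f for comparison} implies Assumption~\ref{ass:D}), and then the comparison principle of Appendix~\ref{se:appendixComparison} with $w_2=\bar V_*\in\cR(\cO)$ to force $\bar V^*=\bar V_*$ on $[0,T]\times\overline\cO$, giving both~(i) and~(ii). The only checkpoints you leave implicit that the paper makes explicit are (a) verifying that $\bar H$ satisfies the structure condition Assumption~\ref{ass: regu bold H} (Lemma~\ref{lem: verification H for the example}), and (b) invoking the remark after Theorem~\ref{thm: comparison} to reduce the $-\partial_t$-equation to the $\rho\varphi$-equation of the appendix via the degree-one homogeneity of $\bar H$ in $(D\varphi,D^2\varphi)$.
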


\begin{proof}
  Recalling that $f$ has quadratic growth~\eqref{eq: cond growth f and g}, it follows from standard estimates for the SDE~\eqref{eq: def X eds} under Assumptions \ref{ass:C} and \ref{ass: mu sigma f for comparison} that $\bar V$ has quadratic growth and satisfies the boundary condition.

  Assumption~\ref{ass: mu sigma f for comparison} implies Assumption~\ref{ass:D} and hence Theorem~\ref{thm: PDE state constraint} yields that $\bar V^{*}$ and $\bar V_{*}$ are sub- and supersolutions, respectively. Moreover, Assumption~\ref{ass: mu sigma f for comparison} implies that $\bar H$ is continuous and satisfies Assumption~\ref{ass: regu bold H} in Appendix~\ref{se:appendixComparison}; see Lemma~\ref{lem: verification H for the example} below for the proof. If $\bar V_{*}$ is of class $\cR(\cO)$, the comparison principle (Theorem~\ref{thm: comparison} and the subsequent remark, cf.\ Appendix~\ref{se:appendixComparison}) yields that $\bar V^{*}=\bar V_{*}$ on  $[0,T]\x \overline{\cO}$; in particular, (i) holds. Part~(ii) also follows from the comparison result.
\end{proof}

We conclude this section with a sufficient condition ensuring that $\bar V_{*}$ is of class $\cR(\cO)$; the idea is that the volatility should degenerate so that the state process can be pushed away from the boundary. We remark that conditions in a similar spirit exist in the previous literature (e.g., \cite{Ka94, IsLo02}); cf.\ Remark~\ref{rem : comparison assumptions comparison} below.

\begin{proposition}\label{pr:SuffCondForRO}
  Assume that $\bar V_{*}$ is finite-valued on $[0,T]\times\overline{\cO}$ and that $\cO$, $\mu$ and $\sigma$ satisfy the following conditions:
  \begin{enumerate}[topsep=3pt, partopsep=0pt, itemsep=1pt,parsep=2pt]
    \item There exists a $C^{1}$-function $\delta$, defined on  a neighborhood of $\overline\cO\subseteq\R^d$, such that
        $D\delta$ is locally Lipschitz continuous and
        \[
          \delta>0\mbox{ on }\cO,\quad \delta=0\mbox{ on }\partial \cO, \quad \delta<0\mbox{ outside }\overline\cO.
        \]
    \item There exists a locally Lipschitz continuous mapping $\check u: \R^{d}\to U$ such that for all $x\in\overline{\cO}$ there exist an open neighborhood $B$ of $x$ and $\iota>0$ satisfying
       \begin{equation}\label{eq: hat u strictly inside}
       \mu(z,\check u(z))^{\top} D\delta(y) \ge \iota \,\mbox{ and }\,   \sigma(y,\check u(y))=0 \,\mbox{ for all } y  \in B\cap \overline{\cO} \mbox{ and } z\in B.
       \end{equation}
  \end{enumerate}
  Then $\bar V_{*}$ is of class $\cR(\cO)$.
\end{proposition}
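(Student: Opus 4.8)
The plan is to push into $\cO$ along the drift of the feedback control $\check u$, exploiting that this drift both points strictly inward at $\partial\cO$ (so the push is a genuine interior‑cone direction) and comes with vanishing volatility near $\partial\cO$ (so the push is \emph{deterministic} and costs nothing in reward). Fix $(t,x)\in[0,T)\x\partial\cO$ and apply the second hypothesis of the Proposition at $x$: there are a ball $B\ni x$ and $\iota>0$ with $\sigma(y,\check u(y))=0$ and $\mu(w,\check u(w))^\top D\delta(y)\ge\iota$ for $y\in B\cap\overline\cO$, $w\in B$; cf.~\eqref{eq: hat u strictly inside}. In particular $b:=\mu(x,\check u(x))$ satisfies $b^\top D\delta(x)\ge\iota>0$, so $b\ne0$. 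Since $x\mapsto\mu(x,\check u(x))$ is locally Lipschitz, the autonomous flow $\Phi_{\cdot}$ of $\dot\xi=\mu(\xi,\check u(\xi))$ is well defined near $x$, and I set $\ell(\eps):=\Phi_\eps(x)-x=\eps\,b+O(\eps^2)$, so $\eps^{-1}|\ell(\eps)|\to|b|<\infty$, which is~\eqref{eq: ass delta lambda 2}. For~\eqref{eq: ass delta lambda 1} I shrink $B$ so that $D\delta(\cdot)^\top b\ge\iota/2$ on $B$ (continuity of $D\delta$) and use a second‑order Taylor expansion of $\delta$ — legitimate since $\delta\in C^1$ with $D\delta$ locally Lipschitz — together with $\delta\ge0$ on $\overline\cO$: for $y\in\overline\cO\cap B'$ ($B'\subset B$ a smaller ball), any $\psi(\eps)=o(\eps)$ and all small $\eps$,
\[
  \delta\big(y+\ell(\eps)+\psi(\eps)\big)\ \ge\ \delta(y)+\tfrac{\iota}{2}\eps-C\eps^2+o(\eps)\ >\ 0,
\]
i.e.\ $y+\ell(\eps)+\psi(\eps)\in\cO$, so Definition~\ref{def: class R}(i) holds.

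The heart of the argument is a ``push‑in'' estimate: for suitable $r>0$ and ball $B'\ni x$,
\[
  \bar V(s,z)\ \ge\ \bar V\big(s+\eps,\Phi_\eps(z)\big)\qquad\text{whenever }(s,z)\in[0,T)\x(\cO\cap B'),\ 0<\eps\le r,\ s+\eps\le T.
\]
First choose $r,B'$ so that $\Phi_\tau(z)\in B$ for $z\in B'$, $\tau\in[0,r]$; then for $z\in\cO\cap B'$ one has $\tfrac{d}{d\tau}\delta(\Phi_\tau(z))=D\delta(\Phi_\tau(z))^\top\mu(\Phi_\tau(z),\check u(\Phi_\tau(z)))\ge\iota$, so $\Phi_{\cdot}(z)$ stays in $\cO\cap B$ on $[0,r]$. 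On $[s,s+\eps]$ I run the deterministic control $r\mapsto\check u(\Phi_{r-s}(z))$: since $\sigma$ vanishes along the flow, uniqueness for~\eqref{eq: def X eds} forces $X_{s,z}(r)=\Phi_{r-s}(z)$, so the state stays in $\cO$ and reaches the \emph{deterministic} value $z':=\Phi_\eps(z)$ at time $s+\eps$. If $\bar V(s+\eps,z')=-\infty$ the estimate is trivial; otherwise I concatenate this control (as in~\eqref{eq:concatenationExample}, admissible by Lemma~\ref{lem: verification assumption section dynamics}) with a control $\bar\nu$ admissible from $(s+\eps,z')\in[0,T)\x\cO$ and $\eps$‑optimal for $\bar V(s+\eps,z')$, and invoke the flow property of the SDE: the concatenation lies in $\bar\cU(s,z)$ and has reward $F(s+\eps,z';\bar\nu)\ge\bar V(s+\eps,z')-\eps$. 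Letting the optimisation error vanish proves the estimate.

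It then remains to verify Definition~\ref{def: class R}(ii) with $\lambda(\eps):=\eps$ (so~\eqref{eq: ass delta lambda 3} is trivial). Recall that $\bar V_*$ is the lower semicontinuous envelope of $\bar V$ relative to $[0,T)\x\cO$, extended to $[0,T]\x\overline\cO$. Since $(t+\lambda(\eps),x+\ell(\eps))\in[0,T)\x\cO$ for small $\eps$ and tends to $(t,x)$, lower semicontinuity gives $\liminf_{\eps\downarrow0}\bar V_*(t+\lambda(\eps),x+\ell(\eps))\ge\bar V_*(t,x)$. For the converse I take $(s_j,z_j)\in[0,T)\x\cO$ with $(s_j,z_j)\to(t,x)$ and $\bar V(s_j,z_j)\to\bar V_*(t,x)$; for fixed small $\eps$ and $j$ large (so $z_j\in B'$, $s_j+\eps<T$) the push‑in estimate gives $\bar V(s_j,z_j)\ge\bar V(s_j+\eps,\Phi_\eps(z_j))$. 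As $\Phi_\eps(z_j)\in\cO$ and $(s_j+\eps,\Phi_\eps(z_j))\to(t+\eps,\Phi_\eps(x))=(t+\lambda(\eps),x+\ell(\eps))$, the definition of the envelope yields $\liminf_j\bar V(s_j+\eps,\Phi_\eps(z_j))\ge\bar V_*(t+\lambda(\eps),x+\ell(\eps))$, hence $\bar V_*(t,x)\ge\bar V_*(t+\lambda(\eps),x+\ell(\eps))$ for all small $\eps$. Combined with the previous bound this gives $\lim_{\eps\downarrow0}\bar V_*(t+\lambda(\eps),x+\ell(\eps))=\bar V_*(t,x)$, i.e.~\eqref{eq: ass w2 lambda and delta} with $w=\bar V_*$; thus $\bar V_*\in\cR(\cO)$.

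I expect the push‑in estimate to be the main obstacle: one must upgrade the pointwise hypothesis~\eqref{eq: hat u strictly inside} to a uniform statement on a fixed neighborhood of $x$ (a short continuity/compactness argument), check that the concatenated control is admissible for the \emph{open} constraint — which is precisely why the flow must stay in $\cO$ and why the flow property of the SDE is used — and observe that no reward is lost because the switch at time $s+\eps$ occurs at a deterministic state, the volatility having been arranged to vanish there. A small but essential point in Definition~\ref{def: class R}(i) is that $\ell$ is first‑order inward, i.e.\ $\mu(x,\check u(x))^\top D\delta(x)>0$; without it~\eqref{eq: ass delta lambda 1} would fail for boundary points $y$.
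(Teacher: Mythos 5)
Your proposal is correct and follows essentially the same approach as the paper: take $\ell(\eps)$ to be the displacement of the deterministic ODE flow of $\mu(\cdot,\check u(\cdot))$ started at $x$, use the strict inward drift to verify Definition~\ref{def: class R}(i), and exploit the vanishing volatility to ``push in'' at zero cost, yielding $\bar V(s,y)\ge \bar V(s+\eps,\text{push})-o(1)$ and hence~\eqref{eq: ass w2 lambda and delta}. The only (harmless) cosmetic difference is in the verification of~\eqref{eq: ass delta lambda 1}: you use a direct first-order Taylor expansion of $\delta$ at $y$ with the Lipschitz remainder, while the paper integrates $\mu^{\top}D\delta$ along the flow; both rest on the same hypothesis~\eqref{eq: hat u strictly inside}.
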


\begin{proof}
  Fix $(t,x)\in [0,T)\x \partial \cO$ and let $\delta$, $\iota$ and $B$ and  be as above; we may assume that $B$ is bounded.
  Consider $y\in\overline{\cO}\cap B$. Since $x'\mapsto \mu(x',\check u(x'))$ is locally Lipschitz,
  the ordinary differential equation
  \[
    \check x(s)=y+\int_{0}^{s} \mu\big(\check x(r),\check u(\check x(r))\big)\,dr
  \]
  has a unique solution $\check x_{y}$ on some interval $[0,T_y)$.
  Then~\eqref{eq: hat u strictly inside} ensures that
  \begin{equation}\label{eq: hat x y in Oc}
    \check x_{y}(s)\in \cO \quad\mbox{for } s\in(0,\eps],\mbox{ for $\eps>0$ small enough,}\mbox{ for all }  y\in  \overline{\cO}\cap B.
  \end{equation}
  For $\eps\in [0,T_x)$, we set
  \[
    \ell( \eps):=\check x_{x}(\eps)-x,\quad \lambda(\eps):=\eps.
  \]
  Then~\eqref{eq: ass delta lambda 3} is clearly satisfied. Using the continuity of $\mu$ and $\check u$, we see that
  \[
    \eps^{-1} | \ell( \eps)|\to \mu(x,\check u(x)),
  \]
  which implies \eqref{eq: ass delta lambda 2}.
  Moreover, using that $\mu$ and $\check u$ are locally Lipschitz, we find that
  \[
    \check x_{x}(r)-\check x_{y}(r)=x-y+O(r).
  \]
  Together with~\eqref{eq: hat u strictly inside}, \eqref{eq: hat x y in Oc} and the local Lipschitz continuity of $\mu, \check u$ and $D\delta$, this implies that for $y\in \overline{\cO}$ sufficiently close to $x$ and $\eps>0$ small enough,
  \begin{align*}
    &\delta\big(y+\ell( \eps)+o(\eps)\big) \\[.1em]
    &=\delta\big(y-x+\check x_{x}(\eps)\big) + o(\eps) \\
    &= \delta(y)+\int_{0}^{\eps} \! \mu\big(\check x_{x}(r),\check u(\check x_{x}(r))\big)^{\!\top} D\delta\big(y-x+\check x_{x}(r)\big)\,dr + o(\eps) \\
    &= \delta(y)+ \!\int_{0}^{\eps} \! \mu\big(x-y+\check x_{y}(r),\check u(x-y+\check x_{y}(r))\big)^{\!\top} D\delta\big( \check x_{y}(r)\big)\,dr+O(\eps^{2}) + o(\eps)  \\
    &\ge \eps\iota  + o(\eps),
  \end{align*}
  which is strictly positive for $\eps>0$ small enough. This implies~\eqref{eq: ass delta lambda 1}.

  Consider $(s,y)\in[0,T)\x\cO$ close to $(t,x)$. For $\eps>0$ small enough,  we can find $\nu^{\eps}\in \bar \cU(s+\lambda(\eps),\check x_{y}(\eps))$ such that
  \[
    E\big[f\big(X_{s+\lambda(\eps),\check x_{y}(\eps)}^{\nu^{\eps}}(T)\big)\big]\ge \bar V\big(s+\lambda(\eps),\check x_{y}(\eps)\big)-\eps.
  \]
  Recall the degeneracy condition in~\eqref{eq: hat u strictly inside}. Setting
  \[
    \bar \nu^{\eps}:=\check u(\check x_{y})\1_{[s,s+\lambda(\eps)]}+ \1_{(s+\lambda(\eps),T]}\nu^{\eps},
  \]
  we obtain that
  \begin{align*}
  \bar V(s,y)
    & \geq E\big[f\big(X_{s,y}^{\bar \nu^{\eps}}(T)\big)\big] \\
    & = E\big[f\big(X_{s+\lambda(\eps),\check x_{y}(\eps)}^{\nu^{\eps}}(T)\big)\big] \\
    & \geq \bar V\big(s+\lambda(\eps),\check x_{y}(\eps)\big)-\eps.
  \end{align*}
  Recalling that $\check x_{y}(\eps)\to \check x_{x}(\eps)$ as $y\to x$, this leads to
  \[
    \bar V_{*}(t,x)\ge \bar V_{*}\big(t+\lambda(\eps),\check x_{x}(\eps)\big)-\eps=\bar V_{*}\big(t+\lambda(\eps),x+\ell(\eps)\big)-\eps
  \]
  for $\eps>0$ small enough, which implies
  \begin{align*}
    \bar V_{*}(t,x)
      &\ge \limsup_{\eps\to 0}\bar V_{*}\big(t+\lambda(\eps),x +\ell(\eps)\big)\\
      &\ge \liminf_{\eps\to 0}\bar V_{*}\big(t+\lambda(\eps),x +\ell(\eps)\big)\\
      & \ge \bar V_{*}(t,x).
  \end{align*}
  Hence $\lim_{\eps\to 0}\bar V_{*}\big(t+\lambda(\eps),x +\ell(\eps)\big)=\bar V_{*}(t,x)$; i.e., \eqref{eq: ass w2 lambda and delta} holds for $\bar V_{*}$.
\end{proof}

\subsubsection{On Closed State Constraints}

Recall that the value function $\bar V$ considered above corresponds to the constraint that the state processes remains in the open set $\cO$.
We can similarly consider the closed constraint; i.e.,
\[
  \overline{V}(t,x):=\sup \big\{E[f(X_{t,x}^{\nu}(T))]:\,\nu\in\cU_t,\, X_{t,x}^\nu(s)\in \overline{\cO}\mbox{ for all }s\in [t,T],\,P\as\big\}
\]
The arguments used above for $\bar V$ do not apply to $\overline{V}$. Indeed, for the closed set, the constraint function $G$ in Section~\ref{sec: DPP state constraint} would not be u.s.c.\ and hence the derivation of Theorem~\ref{thm: DPP state constraint} fails; note that the upper semicontinuity is essential for the covering argument in the proof of Theorem~\ref{th:DPP}(ii),(ii'). Moreover, the switching argument in the proof of Lemma~\ref{lem: local admissibility bar V} cannot be imitated since, given that the state process $X_{t,x}^\nu$ hits the boundary $\partial \cO$, it is not possible to know which trajectories of the state will actually exit $\overline{\cO}$.

However, we shall see that, if a comparison principle holds, then the dynamic programming principle for the open constraint $\cO$ is enough to fully characterize the value function $\overline{V}$ associated to $\overline{\cO}$. More precisely, we shall apply the PDE for $\bar V$ and its comparison principle to deduce that $\overline{V}=\bar V$ under certain conditions.
Of course, the basic observation that this equality holds under suitable conditions is not new; see, e.g., \cite{IsLo02}.
We shall use the following assumption.

\addtocounter{assumptionBis}{-2}
\begin{assumptionBis}\label{ass:C'}
  Assumption~\ref{ass:C} holds with $\hat{u}$ defined on $\overline{\cO}$.
\end{assumptionBis}

\begin{corollary}\label{co:closedConstraint}
  Let $f$ be continuous, let Assumptions \ref{ass:C'} and \ref{ass: mu sigma f for comparison} hold true and assume that
  $\bar V_{*}$ is of class $\cR(\cO)$. Then $\overline{V}=\bar V$ on $[0,T]\times \overline{\cO}$.
\end{corollary}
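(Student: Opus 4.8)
The plan is to sandwich $\overline{V}$ between $\bar{V}$ and the semicontinuous envelopes of $\bar{V}$, using the comparison principle of Appendix~\ref{se:appendixComparison} in place of the direction of the dynamic programming principle that is \emph{not} available for the closed constraint. One inequality is immediate: every $\nu\in\bar{\cU}(t,x)$ keeps $X^\nu_{t,x}$ in $\cO\subseteq\overline{\cO}$, hence is admissible for the closed constraint, so $\bar{V}\le\overline{V}$ on $[0,T]\times\cO$; the task is to prove $\overline{V}\le\bar{V}$.

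First I would observe that, although the \emph{harder} direction of the dynamic programming principle fails for $\overline{\cO}$ (the associated constraint function is only l.s.c., so the covering argument behind Theorem~\ref{th:DPP}(ii),(ii') breaks down), the \emph{easy} direction survives intact. Indeed, the analogue of Assumption~$\bar A$ for the closed constraint holds for exactly the reason it does in Lemma~\ref{lem: verification assumption section dynamics}: it is a disintegration of $P$ along $\cF_\tau$ combined with the flow property of~\eqref{eq: def X eds}, and openness of $\cO$ plays no role. Assumption~\ref{ass:C'} (hence~\ref{ass:C}) makes $\overline{V}$ locally bounded from below on $[0,T)\times\overline{\cO}$ through the feedback $\hat u$, Assumption~\ref{ass: mu sigma f for comparison} (hence~\ref{ass:D}) makes it locally bounded from above with quadratic growth, and the standard SDE estimates used in Proposition~\ref{prop: terminal cond and uniqueness state constraint} give $\overline{V}^*(T,\cdot)\le f$ on $\overline{\cO}$. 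Running the proof of Lemma~\ref{lem: DPP state constraint}(i), and then of Theorem~\ref{thm: PDE state constraint}(i), verbatim, one concludes that $\overline{V}^*$ is a viscosity subsolution of $-\partial_t\vp+\bar H_*(\cdot,D\vp,D^2\vp)\le 0$ on $[0,T)\times\overline{\cO}$.

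On the other side, under the present hypotheses ($f$ continuous, Assumptions~\ref{ass:C} and~\ref{ass: mu sigma f for comparison}, and $\bar V_*$ of class $\cR(\cO)$) Proposition~\ref{prop: terminal cond and uniqueness state constraint} applies to $\bar V$: it has quadratic growth, satisfies $\bar V_*(T,\cdot)\ge f$ on $\overline{\cO}$, its l.s.c.\ envelope $\bar V_*$ is a supersolution on $[0,T)\times\cO$ and of class $\cR(\cO)$, $\bar H$ is continuous so that sub/supersolution refer to the same equation, and $\bar V$ is continuous on $[0,T]\times\cO$ with continuous extension $\bar V^*=\bar V_*$ to $[0,T]\times\overline{\cO}$. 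Applying the comparison theorem (Theorem~\ref{thm: comparison} and the subsequent remark) with the subsolution $\overline{V}^*$ and the supersolution $\bar V_*$ yields $\overline{V}\le\overline{V}^*\le\bar V_*$ on $[0,T]\times\overline{\cO}$. Since $\bar V_*=\bar V$ on $[0,T]\times\cO$, together with $\bar V\le\overline{V}$ this gives $\overline{V}=\bar V$ on $[0,T]\times\cO$.

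It remains to identify $\overline{V}$ with the continuous extension of $\bar V$ at boundary points $(t_0,x_0)\in[0,T]\times\partial\cO$. The bound $\overline{V}(t_0,x_0)\le\bar V_*(t_0,x_0)$ is already in hand, and for $t_0=T$ both sides equal $f(x_0)$, so only the reverse inequality for $t_0<T$ is at stake. Here I would argue as in Lemma~\ref{lem: V is right continuous in m in the SDE model} and in the proof of Proposition~\ref{pr:SuffCondForRO}: use the admissible feedback $\hat u$ of Assumption~\ref{ass:C'} on a short initial interval to move the state into $\cO$ while remaining in $\overline{\cO}$, continue with a near-optimal control, and switch back to $\hat u$ before the state can leave $\overline{\cO}$, so that the concatenation is admissible for the closed constraint and alters the reward only on an event of probability tending to $0$; letting the length of the initial interval shrink and using the continuity of $\bar V=\bar V_*$ just established gives $\overline{V}(t_0,x_0)\ge\bar V_*(t_0,x_0)$. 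This yields $\overline{V}=\bar V$ on all of $[0,T]\times\overline{\cO}$, and incidentally shows $\overline{V}$ is continuous there. I expect this last step---reconciling the closed constraint at the boundary without a working ``hard'' dynamic programming principle---to be the main obstacle; everything else is bookkeeping: checking that the easy direction of the principle and the PDE derivation of Section~\ref{subsec: pde for state constraint} transfer to $\overline{\cO}$, and then assembling the comparison argument.
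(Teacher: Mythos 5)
Your proposal tracks the paper's proof faithfully up to and including the comparison step: the easy direction of the DPP gives that $\overline{V}^{*}$ is a subsolution with $\overline{V}^{*}(T,\cdot)\le f$, and Theorem~\ref{thm: comparison} then yields $\overline{V}^{*}\le\bar V_{*}$. After that, however, you diverge from the paper in a way that creates two gaps you then try to patch by hand, when in fact the paper closes the argument with a simple sandwich that removes the boundary issue entirely.

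The paper's closing step is the chain
\[
  \bar V_{*}\;\le\;\overline{V}_{*}\;\le\;\overline{V}^{*}\;\le\;\bar V_{*}\;=\;\bar V^{*}.
\]
The first inequality holds because $\bar V\le\overline{V}$ on $[0,T)\times\cO$ and because \emph{both} lower envelopes are defined as $\liminf$ over interior points $(t',x')\in[0,T)\times\cO$; the second is just $\liminf\le\limsup$; the third is the comparison estimate; the last equality is Proposition~\ref{prop: terminal cond and uniqueness state constraint}. Hence all five functions coincide, and on $[0,T)\times\cO$ one inserts $\overline{V}_{*}\le\overline{V}\le\overline{V}^{*}$ to finish. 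No separate argument at boundary points is needed, because the identification is carried out at the level of the semicontinuous envelopes, which are computed from the interior.

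In contrast, you first assert ``$\overline{V}\le\overline{V}^{*}$ on $[0,T]\times\overline{\cO}$''. This is immediate only on the interior; at a boundary point $(t_{0},x_{0})\in[0,T]\times\partial\cO$ it amounts to upper semicontinuity of $\overline{V}$ in the normal direction, which has not been established. You then notice the converse inequality $\overline{V}(t_{0},x_{0})\ge\bar V_{*}(t_{0},x_{0})$ is also missing and propose a probabilistic concatenation: push into $\cO$ with $\hat u$ for a short time, then run a near-optimal control, then ``switch back to $\hat u$ before the state can leave $\overline{\cO}$''. This does not work for the closed constraint---this is precisely the obstruction the paper points out: after the state touches $\partial\cO$, there is no $\cF$-measurable way to know which trajectories will actually exit $\overline{\cO}$, so there is no admissible ``early switch''. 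Moreover, choosing a near-optimal control at time $t_{0}+\delta$ as a function of the random landing point $\hat X(t_{0}+\delta)$ is exactly the measurable-selection step that the weak dynamic programming machinery is designed to avoid; invoking it here reintroduces the difficulty the whole paper is built to circumvent. So your boundary argument is both unnecessary (given the envelope chain) and not rigorous as written.
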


We recall that $\bar V$ admits a (unique) continuous extension to  $[0,T]\x \overline{\cO}$ under the stated conditions, so the assertion makes sense.

\begin{proof}
  The easier direction of the dynamic programming principle for $\overline{V}$ can be obtained as above, so the result of
  Lemma~\ref{lem: DPP state constraint}(i) still holds. It then follows by the same arguments as in the proof of
  Theorem~\ref{thm: PDE state constraint} and Proposition \ref{prop: terminal cond and uniqueness state constraint} that the upper semicontinuous envelope $\overline{V}^{*}$ is a viscosity subsolution of \eqref{eq: PDE subsol thm state constraint} satisfying $\overline{V}^{*}(T,\cdot)\le f$ on $\overline \cO$. As in the proof of Proposition~\ref{prop: terminal cond and uniqueness state constraint}, we can apply the comparison
  principle of Theorem~\ref{thm: comparison} to deduce that $\overline{V}^{*}\leq \bar V_*$ on $[0,T]\x \overline{\cO}$.
  On the other hand, we clearly have $\bar V \leq \overline{V}$ on $[0,T]\x \cO$ by the definitions of these value functions. Therefore, we have
  \[
   \bar V_{*}\le \overline{V}_* \leq \overline{V}^{*}\leq \bar V_* =\bar V^{*},
  \]
  where $\overline{V}_{*}$ denotes the lower semicontinuous envelope of $\overline{V}$ and the last equality is due to  Proposition~\ref{prop: terminal cond and uniqueness state constraint}.
  It follows that all these functions coincide.
\end{proof}

\appendix
\section{Comparison for State Constraint Problems}\label{se:appendixComparison}

In this appendix we provide, by adapting the usual techniques, a fairly general comparison theorem for state constraint problems which is suitable for the applications in Proposition~\ref{prop: terminal cond and uniqueness state constraint} and Corollary~\ref{co:closedConstraint}.

In the following, $\cH$ denotes  a continuous mapping from $\R^{d}\x \R^{d}\x\M^{d}$ to $ \R$ which is nonincreasing in its third variable, $\cO$ is a given open subset of $\R^{d}$, and $\rho>0$ is a fixed constant.
We consider the equation
\begin{equation}\label{eq: PDE H}
  \rho \vp -\partial_{t} \vp + \cH(\cdot,D\vp,D^{2}\vp)= 0
\end{equation}
and the following condition on $\cH$.

\begin{assumption}\label{ass: regu bold H}
  There exists $\alpha>0$  such that
  \begin{align*}
    \liminf_{\eta\downarrow 0} \big(&\cH(y,q,Y^{\eta})-\cH(x,p,X^{\eta})\big)\\
    &\leq \alpha \Big( |x-y|\big(1+|q| + n^{2} |x-y|\big) + (1+|x|)|p-q|+(1+|x|^{2})|Q| \Big)
  \end{align*}
  for all $(x,y)\in \overline{\cO}$ with $|x-y|\le 1$ and for all $(p,q,Q)\in \R^{d}\x\R^{d}\x \M^{2d}$, $(X^{\eta},Y^{\eta})_{\eta>0}\subset \M^{d}\x\M^{d}$ and $n\ge 1$ such that
  \[
    \left(\begin{array}{cc} X^{\eta}& 0 \\ 0 & -Y^{\eta}\end{array}\right)
    \le
    A_{n}+\eta A_{n}^{2}\quad\mbox{for all } \eta>0,
  \]
  where
  \[
  A_{n}:=n^{2}\left(\begin{array}{cc} I_{d}& -I_{d} \\ -I_{d} & I_{d}\end{array}\right)+Q.
  \]
\end{assumption}

\begin{theorem}\label{thm: comparison}
  Let Assumption~\ref{ass: regu bold H} hold true. Let $w_{1}$ be an u.s.c.\ viscosity subsolution on $\overline{\cO}$ and let $w_2$ be an l.s.c.\ viscosity supersolution on  $\cO$ of \eqref{eq: PDE H}. If $w_1$ and $w_2$ have polynomial growth  on $\overline{\cO}$ and if $w_{2}$ is of class $\cR(\cO)$, then
  \[
   w_{2}\ge w_{1}\;\;\mbox{on }\{T\}\x \overline{\cO}\;\quad\mbox{implies}\;\quad w_{2}\ge w_{1}\;\;\mbox{on }[0,T]\x \overline{\cO}.
  \]
\end{theorem}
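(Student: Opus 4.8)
The plan is to argue by contradiction via the classical doubling--of--variables scheme of Crandall--Ishii--Lions (see~\cite{CrIsLi92}), with the state--constrained boundary handled entirely through the regularity class $\cR(\cO)$. So suppose
\[
  M:=\sup_{[0,T]\times\overline{\cO}}(w_1-w_2)>0
\]
while $w_2\ge w_1$ on $\{T\}\times\overline{\cO}$. \textbf{Localization.} Since $w_1$ and $w_2$ have polynomial growth, I would first subtract from $w_1$ a penalization $\beta\phi$ with $\phi(x)=(1+|x|^{2})^{k}$ and $k$ larger than the growth exponent: then $w_1-w_2-\beta\phi\to-\infty$ at infinity, so a positive supremum is \emph{attained} at some $(t_0,x_0)$ with necessarily $t_0<T$ (the difference being $\le 0$ on $\{t=T\}$), and $w_1-\beta\phi$ is still a subsolution of \eqref{eq: PDE H} up to a defect that is controlled by the structure bound of Assumption~\ref{ass: regu bold H} and tends to $0$ as $\beta\downarrow 0$. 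I keep the same notation for the modified subsolution and send $\beta\downarrow 0$ only at the very end.

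\textbf{Doubling with an inward shift.} If $x_0\in\partial\cO$, fix the neighbourhood $B$, the direction $\ell(\cdot)$ and the time--shift $\lambda(\cdot)$ supplied by Definition~\ref{def: class R} at $x_0$ (if $x_0\in\cO$ this step is vacuous for small parameters). For $\eps>0$ and an auxiliary parameter $\theta>0$ with $\theta/\eps\to 0$, I would consider
\[
  \Phi_{\eps,\theta}(t,x,s,y):=w_1(t,x)-w_2\big(s+\lambda(\theta),\,y+\ell(\theta)\big)-\frac{1}{2\eps}\big(|x-y|^{2}+|t-s|^{2}\big),
\]
over $(t,x),(s,y)\in[0,T)\times\overline{\cO}$ (matched, away from $B$, to the plain doubled functional). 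By \eqref{eq: ass delta lambda 1}, for $\eps,\theta$ small the argument $y+\ell(\theta)$ lies in $\cO$, so $w_2$ is only ever evaluated at \emph{interior} points, where its supersolution property is available. The localization of Step~1 gives a maximiser $(t_\eps,x_\eps,s_\eps,y_\eps)$, and the usual estimates yield $\eps^{-1}\big(|x_\eps-y_\eps|^{2}+|t_\eps-s_\eps|^{2}\big)\to 0$ together with $(t_\eps,x_\eps),(s_\eps,y_\eps)\to(t_0,x_0)$ along a subsequence. By \eqref{eq: ass delta lambda 3}--\eqref{eq: ass w2 lambda and delta} and lower semicontinuity of $w_2$, the shift costs $w_2(s_\eps+\lambda(\theta),y_\eps+\ell(\theta))-w_2(s_\eps,y_\eps)\to 0$ as $\theta\to 0$, while \eqref{eq: ass delta lambda 2} gives $|\ell(\theta)|=O(\theta)$, so the spurious first--order contribution $|\ell(\theta)|/\eps=O(\theta/\eps)$ is negligible.

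\textbf{Viscosity inequalities and conclusion.} Next I would apply the Crandall--Ishii lemma to produce parabolic jets of $w_1$ at $(t_\eps,x_\eps)$ and of $w_2$ at the shifted (interior) point, along with matrices $X^{\eta},Y^{\eta}$ satisfying exactly the inequality in Assumption~\ref{ass: regu bold H} with $n^{2}=1/\eps$ and $Q=\beta D^{2}\phi(x_\eps)$; the time--derivative terms coming from $\tfrac{1}{2\eps}|t-s|^{2}$ are equal on both sides and cancel. Writing the subsolution inequality for $w_1$ at $(t_\eps,x_\eps)$, which holds since $x_\eps\in\overline{\cO}$, and the supersolution inequality for $w_2$ at the interior shifted point, subtracting them, passing $\eta\downarrow0$, and invoking Assumption~\ref{ass: regu bold H} gives
\[
  \rho\big(w_1(t_\eps,x_\eps)-w_2(s_\eps+\lambda(\theta),y_\eps+\ell(\theta))\big)\le\alpha\Big(|x_\eps-y_\eps|\big(1+|q_\eps|+\eps^{-1}|x_\eps-y_\eps|\big)+(1+|x_\eps|)|p_\eps-q_\eps|+(1+|x_\eps|^{2})|Q|\Big)+o(1),
\]
where $q_\eps=\eps^{-1}(x_\eps-y_\eps-\ell(\theta))$, $p_\eps=q_\eps+\beta D\phi(x_\eps)$, and the $\ell(\theta)$--corrections are absorbed into $o(1)$. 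Since $\eps^{-1}|x_\eps-y_\eps|^{2}\to0$, $|p_\eps-q_\eps|=O(\beta)$ and $|Q|=O(\beta)$, letting first $\theta\to0$, then $\eps\to0$, and finally $\beta\to0$, the right--hand side vanishes while the left--hand side tends to $\rho M$, so $\rho M\le0$, contradicting $M>0$. Hence $w_2\ge w_1$ on $[0,T]\times\overline{\cO}$.

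\textbf{Main obstacle.} The delicate step is the inward shift of the second slot: one must simultaneously (a) keep $y+\ell(\theta)$ inside $\cO$ so the supersolution property is applicable, (b) keep the induced change of $w_2$ arbitrarily small, and (c) keep the extra gradient terms of order $|\ell(\theta)|/\eps$ negligible relative to the penalization scale $1/\eps$. This is precisely what the three clauses \eqref{eq: ass delta lambda 2}--\eqref{eq: ass w2 lambda and delta} of Definition~\ref{def: class R} are tailored to deliver, with the bound $\liminf_{\eps\to0}\eps^{-1}|\ell(\eps)|<\infty$ being essential for (c). A secondary, more routine, difficulty is the polynomial--growth localization, where Assumption~\ref{ass: regu bold H} is used to show that the chosen penalization is a subsolution of \eqref{eq: PDE H} up to a controllable defect.
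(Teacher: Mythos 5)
Your high-level strategy is the same as the paper's: argue by contradiction via doubling of variables, use the $\cR(\cO)$ data to shift the supersolution slot into $\cO$ so the supersolution inequality can be invoked at interior points, apply the Crandall--Ishii lemma, and feed the resulting jets into Assumption~\ref{ass: regu bold H}. That part of the outline is right. However, two specific points do not close.

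\textbf{The localization step has a genuine gap.} You subtract a purely spatial penalization $\beta\phi$ with $\phi(x)=(1+|x|^2)^k$ and claim the resulting defect in the subsolution inequality is $O(\beta)$, so that sending $\beta\downarrow 0$ at the end finishes the argument. This is not correct. The contribution of the penalization to the right-hand side of the structure inequality in Assumption~\ref{ass: regu bold H} is of size $\alpha\big((1+|x_\eps|)\,\beta|D\phi(x_\eps)| + (1+|x_\eps|^2)\,\beta|D^2\phi(x_\eps)|\big)=O\big(\beta|x_\eps|^{2k}\big)$, and $x_\eps$ converges (as $\eps\to0$, for $\beta$ fixed) to the penalized maximizer $x_0=x_0(\beta)$. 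Since $w_1-w_2$ has polynomial growth of order $p$ and $\phi$ has order $2k$, the only a priori bound is $|x_0(\beta)|\lesssim\beta^{-1/(2k-p)}$, which gives $\beta|x_0(\beta)|^{2k}\lesssim\beta^{-p/(2k-p)}\to\infty$. So the defect need not vanish as $\beta\to0$. The paper avoids this entirely: the penalization $\phi(t,x)=\iota\,e^{-\kappa t}(1+|x|^{2p})$ carries an exponential weight in time, and $\kappa$ is fixed large enough (depending only on the structure constant $\alpha$, \emph{not} on $\iota$) so that the function $m$ defined in~\eqref{eq:mFunctionDef} is nonpositive; the derivative terms of $\phi$ are then absorbed by $\partial_t\phi=-\kappa\phi$ and the $\rho\phi$ term, and $\iota$ is never sent to zero. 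The contradiction is $\rho\zeta\le 2m(t_0,x_0)\le 0$ with $\zeta>0$ fixed. Your version has no analogous absorbing mechanism, and the limit $\beta\to0$ does not supply one.

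\textbf{The coupling and order of the parameters $\theta$ and $\eps$ is inconsistent.} You simultaneously write ``$\theta/\eps\to 0$'' and ``letting first $\theta\to0$, then $\eps\to0$''; one cannot do both. More importantly, the inward shift is only useful if it remains active while the doubling penalization concentrates the maximizer near the boundary point: one wants $\theta$ held fixed (or tied to the doubling scale) while $\eps\to0$, not $\theta\to0$ first. If $\theta\to0$ with $\eps$ fixed, the guarantee from~\eqref{eq: ass delta lambda 1} that $y_\eps+\ell(\theta)\in\cO$ is lost because $y_\eps$ depends on $\theta$. The paper's construction couples the shift to the doubling scale ($\ell(n^{-1})$, $\lambda(n^{-1})$ inside the quadratic penalty), sends $n\to\infty$ first, and only then sends a separate small weight $\eps$ in front of the spatial doubling penalty to zero; that residual $\eps$-parameter kills the term $\alpha\eps\big(\liminf_n n|\ell(n^{-1})|\big)^2$ that survives the $n\to\infty$ limit, which is precisely where~\eqref{eq: ass delta lambda 2} is used. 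Your bookkeeping conflates these two roles.

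A further, smaller omission: the paper also inserts the centering term $|t-t_0|^2+|x-x_0|^4$ in the doubled functional and uses~\eqref{eq: ass w2 lambda and delta} at $(t_0,x_0)$ to obtain the lower bound~\eqref{eq:proofComparisonZeta} on the doubled maximum and hence the convergence of the maximizers to $(t_0,x_0)$. You assert this convergence ``by the usual estimates'' without the centering, which is not automatic when the unpenalized supremum may not be attained.
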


\begin{remark}{\rm
  Our result also applies to the equation
  \begin{equation}\label{eq:dtVersion}
    -\partial_{t} \vp + \cH(\cdot,D\vp,D^{2}\vp)= 0,
  \end{equation}
  provided that $\cH$ is homogeneous of degree one with respect to  its second and third argument, as it is the case for the Hamilton-Jacobi-Bellman operators in the body of this paper. Indeed, $w_{1}$ is then a subsolution of~\eqref{eq:dtVersion} if and only if  $(t,x)\mapsto e^{\rho t }w_{1}(t,x)$ is a subsolution of~\eqref{eq: PDE H}, and similarly for the supersolution. Further extensions could also be considered but are beyond the scope of this paper.
  }
\end{remark}

\begin{proof}[Proof of Theorem~\ref{thm: comparison}]
  Assume that $w_{2}\ge w_{1}$ on $\{T\}\x \overline{\cO}$. Let $p\ge 1$ and $C>0$ be such that $w_{1}(t,x)-w_{2}(t,x)\le C(1+|x|^{p})$ for all $(t,x)\in [0,T]\x \overline{\cO}$. Assume for contradiction that $\sup (w_{1}-w_{2})>0$, then we can find $\iota>0$ and $(t_{0},x_{0})\in  [0,T]\x \overline{\cO}$ such that
  \begin{equation}\label{eq: proof comp cara point eta}
  \zeta:=(w_{1}-2\phi-w_{2})(t_{0},x_{0})
  =\max_{[0,T]\x \overline{\cO} } (w_{1}-2\phi-w_{2})>0,
  \end{equation}
  where
  \[
    \phi(t,x):=\iota e^{-\kappa t}(1+|x|^{2p}).
  \]
  Here $\kappa>0$ is a fixed constant which is large enough to ensure that
  \begin{align}\label{eq:mFunctionDef}
    m(t&,x):= \\
   & -\rho\phi(t,x)+\partial_{t}\phi(t,x)
   +\alpha\big((1+|x|) |D\phi(t,x)| +(1+|x|^{2})|D^{2}\phi(t,x)|\big) \nonumber
  \end{align}
  is nonpositive on $[0,T]\x \R^{d}$, where $\alpha$ is the constant from Assumption~\ref{ass: regu bold H}.

  Note that by the assumption that $w_{2}\ge w_{1}$ on $\{T\}\x \overline{\cO}$, we must have $(t_{0},x_{0})\in [0,T)\x \overline{\cO}$.

  \emph{Case 1:  $x_{0} \in \partial \cO$.}
  For all $n\ge 1$, there exist $(t^{n},x^{n},s^n,y^n)\in( [0,T]\x \overline{\cO})^{2} $ satisfying
  \begin{equation}\label{eq: proof comp cara point eta n}
  \Phi^{n}(t^{n},x^{n},s^{n},y^{n})=\max\limits_{( [0,T]\x \overline{\cO})^{2}}\Phi^{n},
  \end{equation}
  where
  \[
  \Phi^{n}(t,x,s,y):=w_{1}(t,x)-w_{2}(s,y)-\Theta^{n}(t,x,s,y)
  \]
  and
  \begin{align*}
    \Theta^{n}(t,x,s,y)
      &:= \frac12 n^{2}\big(|t+\lambda(n^{-1})-s|^{2}+\eps|x+ \ell(n^{-1})-y|^{2}\big) \\
      & \phantom{:=\;} + |t-t_{0}|^{2}+|x-x_{0}|^{4}+\phi(t,x)+\phi(s,y),
  \end{align*}
  with $\ell$ and $\lambda$ given for $x_{0}$ as in the statement of the Definition \ref{def: class R}, and $\eps>0$.
  Note that \eqref{eq: proof comp cara point eta n}, the assumption that $w_2$ satisfies \eqref{eq: ass w2 lambda and delta}, and \eqref{eq: proof comp cara point eta} imply that
  \begin{align}\label{eq:proofComparisonZeta}
    \Phi^{n}(t^{n},x^{n},s^{n},y^{n})
    &\ge \Phi^{n}\big(t_{0},x_{0},t_{0}+\lambda(n^{-1}),x_{0}+ \ell(n^{-1})\big) \nonumber\\
    &=  (w_{1}-2\phi-w_{2})(t_{0},x_{0})+o(1) \nonumber\\
    &= \zeta+o(1)\quad \mbox{as }n\to\infty.
  \end{align}
  Recalling the growth condition on $w_{1}, w_{2}$ and the definition of $\phi$, it follows that, after passing to a subsequence, $(t^{n},x^{n},s^n,y^n)$ converges to some $(t^{\infty},x^{\infty},t^{\infty},x^{\infty})\in ([0,T]\x\overline{\cO})^{2}$.
  We then have
  \begin{align*}
    \zeta
     & = (w_{1}-2\phi-w_{2})(t_{0},x_{0}) \\
     & =\max_{[0,T]\x \overline{\cO} } (w_{1}-2\phi-w_{2})\\
     &\ge (w_{1}-2\phi-w_{2})(t^\infty,x^{\infty})- |t^{\infty}-t_{0}|^{2}-|x^{\infty}-x_{0}|^{4} \\
     &\phantom{\ge\;} - \limsup_{n\to \infty}  \frac12 n^{2}\big(|t^{n}+\lambda(n^{-1})-s^{n}|^{2}+\eps|x^{n}+ \ell(n^{-1})-y^{n}|^{2}\big)\\
     & \geq \liminf_{n\to\infty} \Phi^{n}(t^{n},x^{n},s^{n},y^{n}) \\
     &\ge  \zeta,
  \end{align*}
  where~\eqref{eq:proofComparisonZeta} was used in the last step. After passing to a subsequence, we deduce that
  \begin{eqnarray}
  &(t^{n},x^{n},s^{n},y^{n})\to (t_{0},x_{0},t_{0},x_{0}),& \label{eq: proof comp conv point eta n}\\
  &w_{1}(t^{n},x^{n})-w_{2}(s^{n},y^{n})\to  (w_{1}-w_{2})(t_{0},x_{0}),& \label{eq: proof comp conv point eta n through functions}
  \\
  &s^{n}=t^{n}+\lambda(n^{-1})+o(n^{-1}),\quad  y^{n}=x^{n}+\ell(n^{-1})+o(n^{-1}) .& \label{eq: proof comp conv point eta n with speed}
  \end{eqnarray}
  Since $(t_{0},x_{0}) \in [0,T)\x\partial \cO$, it follows from \eqref{eq: ass delta lambda 1}, \eqref{eq: ass delta lambda 3} and \eqref{eq: proof comp conv point eta n with speed} that $(s^{n},y^{n})\in [0,T)\x \cO$ for $n$ large enough.

  Let $\overline \cP^{2,+}_{\cO}w_{1}$ and $ \overline \cP^{2,-}_{\cO}w_{2}$ be the
  ``closed'' parabolic super- and subjets as defined in \cite[Section~8]{CrIsLi92}.
  From the Crandall-Ishii lemma \cite[Theorem 8.3]{CrIsLi92} we obtain, for each $\eta>0$, elements
  \[
    (a^{n},p^{n},X^{n}_{\eta}) \in \overline \cP^{2,+}_{\cO}w_{1}(t^{n},x^{n})\quad \;\mbox{and}\;\quad (b^{n},q^{n},Y^{n}_{\eta}) \in \overline \cP^{2,-}_{\cO}w_{2}(s^{n},y^{n})
  \]
  such that
  \begin{align*}
    a^{n}&=\partial_{t} \Theta^{n}(t^{n},x^{n},s^{n},y^{n}),  && b^{n}=-\partial_{s} \Theta^{n}(t^{n},x^{n},s^{n},y^{n}),\\
    p^{n}&=D_{x}\Theta^{n}(t^{n},x^{n},s^{n},y^{n}),  && q^{n}=-D_{y} \Theta^{n}(t^{n},x^{n},s^{n},y^{n}),
  \end{align*}
  \[
   \left(\begin{array}{cc} X^{n}_{\eta}& 0 \\ 0 & -Y^{n}_{\eta}\end{array}\right)
   \le
  A_{n}+\eta A_{n}^{2},
  \]
  where $A_n=D^2 \Theta^n(t^{n},x^{n},s^{n},y^{n})$; i.e.,
  \[
    A_{n}=\eps n^{2}\left(\begin{array}{cc} I_{d}& -I_{d} \\ -I_{d} & I_{d}\end{array}\right)+
    \left(\begin{array}{cc} D^{2}\phi(t^{n},x^{n})  +O(|x^{n}-x_{0}|^2)& 0 \\ 0 & D^{2}\phi(s^{n},y^{n}) \end{array}\right).
  \]
  In view of the sub- and supersolution properties of $w_{1}$ and $w_{2}$, the fact that $(s^{n},y^{n})\in [0,T)\x \cO$ for $n$ large, and Assumption~\ref{ass: regu bold H}, we deduce that
  \begin{align*}
   \Delta_{n}
   &:=\rho(w_{1}(t^{n},x^{n})-w_{2}(s^{n},y^{n}))\\
   &\le 2 (t^{n}-t_{0})+\partial_{t}\phi(t^{n},x^{n})+\partial_{s}\phi(s^{n},y^{n})\\
   &\phantom{\le\,}+ \alpha   |x^{n}-y^{n}|\big(1+|q^{n}| + \eps n^{2} |x^{n}-y^{n}|\big)\\
   &\phantom{\le\,} +\alpha\big( (1+|x^{n}|)|q^{n}-p^{n}|+(1+|x^{n}|^{2})|Q^{n}| \big),
  \end{align*}
  where
  \[
   Q^{n}:= \left(\begin{array}{cc} D^{2}\phi(t^{n},x^{n})  +O(|x^{n}-x_{0}|^2)& 0 \\ 0 & D^{2}\phi(s^{n},y^{n}) \end{array}\right).
  \]
  By the definitions of $p^{n}$ and $q^{n}$, it follows that
  \begin{align*}
   \Delta_{n}
   &\le 2 (t^{n}-t_{0})+\partial_{t}\phi(t^{n},x^{n})+\partial_{s}\phi(s^{n},y^{n})\\
   &\phantom{\le\,} + \alpha|x^{n}-y^{n}|\big(1  +|D\phi(s^{n},y^{n})|+\eps n^{2}|x^{n}+\ell(n^{-1})-y^{n}|+  \eps n^{2} |x^{n}-y^{n}|\big)\\
   &\phantom{\le\,}+\alpha  (1+|x^{n}|)\big(4|x^{n}-x_{0} |^{3} + |D\phi(t^{n},x^{n})|+ |D\phi(s^{n},y^{n})|\big)\\
   &\phantom{\le\,}+ \alpha(1+|x^{n}|^{2})\big( |D^{2}\phi(t^{n},x^{n})|+ |D^{2}\phi(s^{n},y^{n})|+O(|x^{n}-x_{0}|^2)\big).
  \end{align*}
  Recalling \eqref{eq: proof comp conv point eta n}--\eqref{eq: proof comp conv point eta n with speed}, letting $n\to\infty$ leads to
   \begin{align*}
   \rho(w_{1} -w_{2} )(t_{0},x_{0})
   &\le 2\partial_{t} \phi(t_{0},x_{0})  + \alpha \eps \Big(\liminf_{n\to \infty}n\ell(n^{-1})\Big)^2\\
   &\phantom{\le\;} +2\alpha\big((1+|x_{0}|)|D\phi(t_{0},x_{0})|+(1+|x_{0}|^{2})|D^{2}\phi(t_{0},x_{0})|\big),
  \end{align*}
  which, by  \eqref{eq: ass delta lambda 2} and the definition of $m$ in \eqref{eq:mFunctionDef}, implies
  \[
   \rho(w_{1} -2\phi -w_{2})(t_{0},x_{0}) \le 2m(t_{0},x_{0})
  \]
  after letting $\eps \to 0$. Since $\kappa>0$ has been chosen so that $m\le 0$ on $[0,T]\x \R^{d}$, this contradicts~\eqref{eq: proof comp cara point eta}.

  \emph{Case 2:  $x_{0} \in \cO$.} This case is handled similarly by using
  \[
   \Theta^{n}(t,x,s,y):=\frac12 n^{2}\big(|t-s|^{2}+|x-y|^{2}\big)
   +|t-t_{0}|^{2}+|x-x_{0}|^{4}+\phi(t,x)+\phi(s,y).
  \]
  After taking a subsequence, the corresponding sequence of maximum points $(t^{n},x^{n},s^{n},y^{n})_{n\ge 1}$ again converges to $(t_{0},x_{0},t_{0},x_{0})$, so that $x^{n},y^{n}\in \cO$ for $n$ large enough. The rest of the proof follows the same arguments as in Case~1.
\end{proof}

\begin{lemma}\label{lem: verification H for the example}
  Under Assumption~\ref{ass: mu sigma f for comparison}, the operator $\bar H$ defined in~\eqref{eq: def bar H example} satisfies Assumption~\ref{ass: regu bold H}.
\end{lemma}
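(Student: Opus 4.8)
The plan is to verify the inequality of Assumption~\ref{ass: regu bold H} for $\bar H$ directly, using only the joint Lipschitz continuity of $\mu,\sigma$ from the standing setup together with their linear growth in $x$, uniform in $u$, provided by Assumption~\ref{ass: mu sigma f for comparison}. Fix $(x,y)\in\overline{\cO}$ with $|x-y|\le 1$, fix $(p,q,Q)\in\R^d\x\R^d\x\M^{2d}$, and let $(X^\eta,Y^\eta)_{\eta>0}$ and $n\ge 1$ be as in the assumption. Since $\bar H(z,r,Z)=\inf_{u\in U}(-\bar L^u(z,r,Z))$, the elementary inequality $\inf_u a_u-\inf_u b_u\le\sup_u(a_u-b_u)$ gives
\[
  \bar H(y,q,Y^\eta)-\bar H(x,p,X^\eta)\le\sup_{u\in U}\big(\bar L^u(x,p,X^\eta)-\bar L^u(y,q,Y^\eta)\big),
\]
so it suffices to bound, uniformly in $u\in U$, the drift difference $\mu(x,u)^\top p-\mu(y,u)^\top q$ and the diffusion difference $\tfrac12\tr[\sigma\sigma^\top(x,u)X^\eta]-\tfrac12\tr[\sigma\sigma^\top(y,u)Y^\eta]$. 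For the drift difference I would split $\mu(x,u)^\top p-\mu(y,u)^\top q=\mu(x,u)^\top(p-q)+(\mu(x,u)-\mu(y,u))^\top q$ and use $|\mu(x,u)|\le C(1+|x|)$ and $|\mu(x,u)-\mu(y,u)|\le L|x-y|$, both uniform in $u$, to obtain the bound $C(1+|x|)|p-q|+L|x-y||q|$.

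The diffusion difference is the one genuine computation. Let $B_u$ be the $2d\x d$ matrix with top $d\x d$ block $\sigma(x,u)$ and bottom block $\sigma(y,u)$; then $\tr[\sigma\sigma^\top(x,u)X^\eta]-\tr[\sigma\sigma^\top(y,u)Y^\eta]$ equals the trace of $B_u^\top$ times the block-diagonal matrix $\mathrm{diag}(X^\eta,-Y^\eta)$ times $B_u$, and the matrix inequality of the assumption yields
\[
  \tr[\sigma\sigma^\top(x,u)X^\eta]-\tr[\sigma\sigma^\top(y,u)Y^\eta]\ \le\ \tr[B_u^\top A_nB_u]+\eta\,\tr[B_u^\top A_n^2B_u].
\]
A direct block computation gives $B_u^\top\left(\begin{smallmatrix}I_d&-I_d\\-I_d&I_d\end{smallmatrix}\right)B_u=(\sigma(x,u)-\sigma(y,u))^\top(\sigma(x,u)-\sigma(y,u))$, hence $\tr[B_u^\top A_nB_u]=n^2|\sigma(x,u)-\sigma(y,u)|^2+\tr[B_u^\top QB_u]\le n^2L^2|x-y|^2+c\,|B_u|^2|Q|$ for a dimensional constant $c$, where $|B_u|^2=|\sigma(x,u)|^2+|\sigma(y,u)|^2\le C'(1+|x|^2)$ uniformly in $u$ by uniform linear growth and $|x-y|\le1$. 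The error term obeys $\eta\,\tr[B_u^\top A_n^2B_u]\le\eta\,c\,|B_u|^2|A_n|^2$, which is bounded by a constant depending on $n,x,Q$ but not on $\eta$ or $u$, and therefore vanishes when one takes $\liminf_{\eta\downarrow0}$.

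Putting the two estimates together and passing to $\liminf_{\eta\downarrow0}$ gives
\[
  \liminf_{\eta\downarrow0}\big(\bar H(y,q,Y^\eta)-\bar H(x,p,X^\eta)\big)\le C(1+|x|)|p-q|+L|x-y||q|+\tfrac12n^2L^2|x-y|^2+\tfrac c2C'(1+|x|^2)|Q|,
\]
which is exactly of the form required in Assumption~\ref{ass: regu bold H} once $\alpha$ is taken as the maximum of $L$, $\tfrac12L^2$, $C$ and $\tfrac c2C'$ (the $|x-y|$ term on the right-hand side of that assumption is not even needed). The step I expect to require the most care is the interchange of the supremum over the possibly unbounded control set $U$ with the limit in $\eta$: one must secure a bound on $\eta\,\tr[B_u^\top A_n^2B_u]$ that is uniform in $u$ before sending $\eta\downarrow0$, and this is precisely the point where uniformity in $u$ of the linear growth of $\sigma$ in Assumption~\ref{ass: mu sigma f for comparison} enters; everything else is routine bookkeeping. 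The same bounds also show $\bar H$ is finite and continuous — the infimum defining it is over a family that is locally equi-Lipschitz in $(x,p,Q)$ — so $\bar H$ is an admissible choice of $\cH$ in the sense of Appendix~\ref{se:appendixComparison}.
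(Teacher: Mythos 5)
Your argument is correct and essentially the same as the paper's: both split the drift difference into a Lipschitz part and a linear-growth part, stack $\sigma(x,u)$ and $\sigma(y,u)$ into a $2d\times d$ matrix, sandwich it with the Crandall--Ishii matrix inequality, and then invoke the Lipschitz continuity and uniform-in-$u$ linear growth of $\sigma$ (using $|x-y|\le 1$ to absorb $|y|$ into $|x|$). The only cosmetic difference is that you compute $\tr[B_u^{\top}\Xi^{\eta}B_u]$ in one step, while the paper sums the quadratic forms over the columns $\Sigma^{i}$ of the same stacked matrix, which is the identical calculation.
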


\begin{proof}
  The argument is standard. We first observe that
  \begin{align*}
    -\bar L^{u}(y,q,Y^{\eta})+\bar L^{u}(x,p,X^{\eta})
    &=\big(\mu(x,u)-\mu(y,u) \big)^{\top}q + \mu(x,u)^{\top} (p-q)\\
    &\phantom{=}\;+ \frac12 \sum_{1\le i\le d}\Sigma^{i}(x,y,u)^\top\,\Xi^{\eta}\;\Sigma^{i}(x,y,u),
  \end{align*}
  where
  \[
    \Sigma(x,y,u):=\left(\begin{array}{cc}\sigma(x,u)\\\sigma(y,u)\end{array}\right)\quad\mbox{and}\quad \Xi^{\eta}:= \left(\begin{array}{cc} X^{\eta}& 0 \\ 0 & -Y^{\eta}\end{array}\right)
  \]
  and $\Sigma^{i}$ denotes the $i$th column of $\Sigma$.
  Since $\mu$ is Lipschitz continuous and has uniformly linear growth by Assumption~\ref{ass: mu sigma f for comparison}, we have
  \begin{eqnarray*}
  \big(\mu(x,u)-\mu(y,u) \big)^{\top}q
  +
  \mu(x,u)^{\top} (p-q)
  \le \alpha\big( |x-y||q|+(1+|x|)|p-q|\big)
  \end{eqnarray*}
  for some constant $\alpha>0$. Recall from the statement of Assumption~\ref{ass: regu bold H} the condition
  that
  \[
   \Xi^{\eta}\le A_{n}+\eta A_{n}^{2},\quad\mbox{where}\quad
    A_{n}:=n^{2}\left(\begin{array}{cc} I_{d}& -I_{d} \\ -I_{d} & I_{d}\end{array}\right)+Q.
  \]
  Since $\sigma$ is Lipschitz continuous and satisfies Assumption \ref{ass: mu sigma f for comparison}, it follows that
    \begin{align*}
    \Sigma^{i}(x,y,u)^\top \,\Xi^{\eta} \,\Sigma^{i}(x,y,u)
    &\le n^{2}|\sigma^{i}(x,u)-\sigma^{i}(y,u)|^{2}\\
    &\phantom{\le}\; + \alpha\big(1+|x|^{2}+|y|^{2}\big)|Q|+O(\eta),
  \end{align*}
   possibly after enlarging $\alpha$, where $\sigma^{i}$ denotes the $i$th column of $\sigma$. We conclude by using the Lipschitz continuity of $\sigma$ and the condition that $|x-y|\le 1$.
\end{proof}

\begin{remark}\label{rem : comparison assumptions comparison}{\rm
  We conclude with some comments on Theorem~\ref{thm: comparison} and Proposition~\ref{pr:SuffCondForRO}, and related results in the literature.

 The main issue in proving comparison with state constraints is to avoid boundary points; i.e., that $y_{n}$ (see the proof of Theorem~\ref{thm: comparison}) ends up on the boundary. One classical way to ensure this, is to use a perturbation of $|x-y|^{2}$ in a suitable inward direction, like the function $\ell$ above. Moreover, this requires the supersolution to be continuous at the boundary points, along the direction of perturbation; cf.~\eqref{eq: ass w2 lambda and delta}.  %

 In~\cite{Ka94}, the inner normal $n(x)$ at the boundary point $x\in \partial \cO$ is used as an inward direction. In the proof, one is only close to $x$ (cf.\ \eqref{eq: proof comp conv point eta n with speed}); therefore, the comparison result~\cite[Theorem 2.2]{Ka94} requires the existence of a truncated cone around $n(x)$ which stays inside the domain, and the continuity of the subsolution along the directions that it generates. Our condition~\eqref{eq: ass delta lambda 1} is less restrictive than the corresponding requirement in~\cite{Ka94}: we only need the continuity along the curve $\eps\mapsto \ell(\eps)$, cf.~\eqref{eq: ass w2 lambda and delta}, rather than all lines in a neighborhood. The function $\lambda$ appears because we consider parabolic equations, whereas~\cite{Ka94} focuses on the elliptic case.

 In Proposition~\ref{pr:SuffCondForRO} we give conditions (certainly not the most general possible) ensuring that the value function is of class $\cR(\cO)$. They should be compared to~\cite[Condition~(A3)]{Ka94}, which is used to verify the continuity assumption of~\cite[Theorem 2.2]{Ka94}. Our conditions are stronger in the sense that they are imposed around the boundary and not only at the boundary; on the other hand, we require $C^{1}$-regularity of the boundary whereas~\cite{Ka94} requires $C^{3}$.

  In~\cite{IsLo02}, a slightly different technique is used, based on ideas from~\cite{IshiiKoike.96}. First, it is assumed that at each boundary point $x$, there exists a fixed control which kills the volatility at the neighboring  boundary points and keeps a truncated cone around the drift in the domain. This is similar to our~\eqref{eq: hat u strictly inside}, except that our control is not fixed; on the other hand, we assume that it kills the volatility in a neighborhood of $x$. Thus, the conditions in~\cite{IsLo02} are not directly comparable to ours; e.g., if $\cO$ is the unit disk in $\R^{2}$, $U=[-1,1]$, $\mu(x,u)=-x$ and $\sigma(x,u)=|x^{2}-u|I_{2}$, then~\eqref{eq: hat u strictly inside} is satisfied (with $\delta$ given by the Euclidean distance near the boundary and $\check u(x)=x^{2}$), while~\cite[Condition~(2.1)]{IsLo02} is not.
  Second, in~\cite{IsLo02}, the state constraint problem is transformed so as to introduce a Neumann-type boundary condition and construct a suitable test function which, as a function of its first component, turns out to be a uniformly strict supersolution of the Neumann boundary condition. The construction in~\cite{IsLo02} heavily relies on the assumption that the coefficients are bounded; cf.\ the beginning of~\cite[Section~3]{IsLo02} and the proof of~\cite[Theorem~3.1]{IsLo02}.
  }
\end{remark}


\begin{thebibliography}{10}
%
%
%
%
%
%
%
%

\bibitem{BEI10}
B.~Bouchard, R.~Elie, and C.~Imbert.
\newblock Optimal control under stochastic target constraints.
\newblock {\em SIAM J. Control Optim.}, 48(5):3501--3531, 2010.

\bibitem{BET09}
B.~Bouchard, R.~Elie, and N.~Touzi.
\newblock Stochastic target problems with controlled loss.
\newblock {\em SIAM J. Control Optim.}, 48(5):3123--3150, 2009.

\bibitem{BT10}
B.~Bouchard and N.~Touzi.
\newblock Weak dynamic programming principle for viscosity solutions.
\newblock {\em SIAM J. Control Optim.}, 49(3):948--962, 2011.

\bibitem{BV12}
B.~Bouchard and T.N.~Vu.
\newblock A stochastic target approach for P\&L matching problems.
\newblock {\em Mathematics of Operation Research}, 37(3):526--558, 2012.

\bibitem{CrIsLi92}
M.~Crandall, H.~Ishii, and P.-L.~Lions.
\newblock User's guide to viscosity solutions of second order partial
  differential equations.
\newblock {\em Bull. Amer. Math. Soc.}, 27(1):1--67, 1992.

\bibitem{FlemingSoner.06}
W.~H. Fleming and H.~M. Soner.
\newblock {\em Controlled Markov Processes and Viscosity Solutions}.
\newblock Springer, New York, 2nd edition, 2006.


\bibitem{freidlin1985functional}
M.~I.~Freidlin.
\newblock {\em Functional Integration and Partial Differential Equations}.
\newblock Princeton University Press, 1985.


\bibitem{follmer2000efficient}
H.~F{\"o}llmer  and P.~Leukert.
\newblock {Efficient hedging: cost versus shortfall risk}.
\newblock {\em Finance and Stochastics}, 4(2):117--146, 2000.

\bibitem{IshiiKoike.96}
H.~Ishii and S.~Koike.
\newblock A new formulation of state constraint problems for first-order
  {PDEs}.
\newblock {\em SIAM J. Control Optim.}, 34(2):554--571, 1996.

\bibitem{IsLo02}
H.~Ishii and P.~Loreti.
\newblock A class of stochastic optimal control problems with state constraint.
\newblock {\em Indiana Univ. Math. J.}, 51(5):1167--1196, 2002.

%
%
%
%

\bibitem{Ka94}
M.~A.~Katsoulakis.
\newblock Viscosity solutions of second order fully nonlinear elliptic equations with state constraints.
\newblock {\em Indiana Univ. Math. J.}, 43(2):493--519, 1994.

\bibitem{LasryLions.89}
J.-M. Lasry and P.-L. Lions.
\newblock Nonlinear elliptic equations with singular boundary conditions and
  stochastic control with state constraints. I. The model problem.
\newblock {\em Math. Ann.}, 283(4):583--630, 1989.

\bibitem{Soner.86a}
H.~M.~Soner.
\newblock Optimal control with state-space constraint. I.
\newblock {\em SIAM J. Control Optim.}, 24(3):552--561, 1986.

\bibitem{Soner.86b}
H.~M.~Soner.
\newblock Optimal control with state-space constraint. II.
\newblock {\em SIAM J. Control Optim.}, 24(6):1110--1122, 1986.

\bibitem{SonerTouzi.02b}
H.~M.~Soner and N.~Touzi.
\newblock Dynamic programming for stochastic target problems and geometric
  flows.
\newblock {\em J. Eur. Math. Soc. (JEMS)}, 4(3):201--236, 2002.

\bibitem{Soravia.99a}
P.~Soravia.
\newblock Optimality principles and representation formulas for viscosity
  solutions of {H}amilton-{J}acobi equations. {I}. {E}quations of unbounded and
  degenerate control problems without uniqueness.
\newblock {\em Adv. Differential Equations}, 4(2):275--296, 1999.

\bibitem{Soravia.99b}
P.~Soravia.
\newblock Optimality principles and representation formulas for viscosity
  solutions of {H}amilton-{J}acobi equations. {II}. {E}quations of control
  problems with state constraints.
\newblock {\em Differential Integral Equations}, 12(2):275--293, 1999.

\bibitem{YongZhou.99}
J.~Yong and X.~Y. Zhou.
\newblock {\em Stochastic Controls. Hamiltonian Systems and HJB Equations}.
\newblock Springer, New York, 1999.
\end{thebibliography}
\end{document}